\title{Separating cyclic subgroups in graph products of groups}
\author{Federico Berlai}
\address[Federico Berlai]{Department of Mathematics, UPV/EHU, Barrio Sarriena s/n, 48940, Leioa, Spain}
\email[Federico Berlai]{federico.berlai@ehu.eus}
\author{Michal Ferov}
\address[Michal Ferov]{School of Mathematical and Physical Sciences, University of Technology Sydney, PO Box 123
Broadway, NSW 2007, Australia}
\email[Michal Ferov]{michal.ferov@gmail.com}
\newtheorem{theorem}{Theorem}[section]
\newtheorem{proposition}[theorem]{Proposition}
\newtheorem{lemma}[theorem]{Lemma}
\newtheorem{corollary}[theorem]{Corollary}
\theoremstyle{definition}
\newtheorem{definition}[theorem]{Definition}
\newtheorem{example}[theorem]{Example}
\newtheorem{question}[theorem]{Question}
\newtheorem{remark}[theorem]{Remark}
\newtheorem{thmx}{Theorem}
\newtheorem{corx}{Corollary}
\newtheorem*{thmB}{Theorem \ref{proposition:GP_p-CSS}}
\newtheorem*{thmBB}{Theorem \ref{general:GP_CSS}}
\DeclareMathOperator{\Inn}{Inn}
\DeclareMathOperator{\BS}{BS}
\DeclareMathOperator{\C}{\mathcal{C}}
\DeclareMathOperator{\NC}{\mathcal{N_C}}
\DeclareMathOperator{\Np}{\mathcal{N}_{\mathnormal{p}}}
\DeclareMathOperator{\PT}{\mathcal{PT}}
\DeclareMathOperator{\proC}{pro-\mathcal{C}}
\DeclareMathOperator{\prop}{pro-{\mathnormal{p}}}
\DeclareMathOperator{\Rad}{Rad}
\DeclareMathOperator{\plog}{plog}
\DeclareMathOperator{\supp}{supp}
\DeclareMathOperator{\ord}{ord}
\DeclareMathOperator{\esupp}{esupp}
\DeclareMathOperator{\link}{link}
\let\star\relax
\DeclareMathOperator{\star}{star}
\DeclareMathOperator{\id}{id}
\DeclareMathOperator{\LL}{LL}
\DeclareMathOperator{\FL}{FL}
\DeclareMathOperator{\E}{E}
\DeclareMathOperator{\U}{\mathfrak{U}}
\DeclareMathOperator{\Ups}{\mathfrak{U}_{ps}}
\begin{document}

\maketitle

\begin{abstract}
    We prove that the property of being cyclic subgroup separable, that is having all cyclic subgroups closed in the profinite topology, is preserved under forming graph products. 
    
    Furthermore, we develop the tools to study the analogous question in the pro-$p$ case. For a wide class of groups we show that the relevant cyclic subgroups - which are called $p$-isolated - are closed in the pro-$p$ topology of the graph product. In particular, we show that every $p$-isolated cyclic subgroup of a right-angled Artin group is closed in the pro-$p$ topology, and we fully characterise such subgroups.
\end{abstract}

\tableofcontents

\section{Introduction}
A very successful way to understand countable discrete groups certainly is by studying them through their finite quotients, and groups where this approach works to its full extent are called residually finite. A group $G$ is \emph{residually finite} if for any two distinct elements $g_1, g_2 \in G$ there is a finite group $Q$ and a surjective homomorphism $\pi \colon G \to Q$ such that $\pi(g_1)$ is distinct from $\pi(g_2)$ in $Q$. 

In general, properties of this type are called \emph{separability properties}: a subset $S \subseteq G$ is said to be \emph{separable} in $G$ if for every $g \in G \setminus S$ there exists a finite quotient of $G$ such that the image of $g$ under the canonical projection does not belong to the image of $S$. 
This is equivalent to $S$ being closed in the profinite topology of $G$.
Therefore, a group is residually finite if and only if the subset $\{e_G\}$ is separable.

One can then define a separability property by specifying which subsets are required to be separable: \emph{conjugacy separable} groups have separable conjugacy classes of elements, LERF groups (also called locally extended residually finite) have separable finitely generated subgroups, and \emph{cyclic subgroup separable} groups (also denoted CSS groups or $\pi_c$) are those where all cyclic subgroups are separable.

In a way, the notion of separability gives an algebraic analogue to decision problems in finitely presented groups: if the subset $S \subseteq G$ is given in suitably nice way  (meaning that $S$ is recursively enumerable and one can always effectively construct the image of $S$ under the canonical projection onto a finite quotient of $G$) and it is separable in $G$, one can then decide whether a word in the generators  of $G$ represents an element belonging to $S$ simply by checking finite quotients. Indeed, it was proved by Mal'cev \cite{malcev} that finitely presented residually finite groups have solvable word problem, and Mostowski \cite{mostowski} showed that finitely presented conjugacy separable groups have solvable conjugacy problem. In a similar fashion, finitely presented LERF groups have solvable generalised word problem, meaning that the membership problem is uniformly solvable for every finitely generated subgroup. In general, algorithms that involve enumerating finite quotients of an algebraic structure are called algorithms of Mal'cev-Mostowski type. 

\smallskip
In this paper we study the already mentioned CSS groups, the groups in which all cyclic subgroups are separable. Naturally, one may describe finitely presented CSS groups as the groups in which the \emph{power problem}, i.e. given words $u$ and $v$ the problem of deciding whether $u$ represents an element that is a power of the element represented by $v$, can be solved by an algorithm of Mal'cev-Mostowski type.

The main focus of this note is to study how does the property of being cyclic subgroup separable behave with respect to certain group-theoretic constructions. In particular, we study separability of cyclic subgroups in \emph{graph products of groups}, a natural generalisation of free and direct products in the category of groups, first introduced by Green \cite{green}. Let $\Gamma = (V\Gamma, E\Gamma)$ be a simplicial graph, i.e. $V\Gamma$ is a set and $E\Gamma \subseteq \binom{V\Gamma}{2}$, and let $\mathcal{G} = \{G_v \mid v \in V\Gamma\}$ be a collection of groups. The graph product $\Gamma\mathcal{G}$ is the quotient of the free product $\ast_{v \in V\Gamma}G_v$ modulo all the relations of the form
\begin{displaymath}
    g_u g_v = g_v g_u \mbox{ for all } g_u \in G_u, g_v \in G_v, \{u,v\}\in E\Gamma.
\end{displaymath}
The groups $G_v \in \mathcal{G}$ are referred to as \emph{vertex groups}. Well-known examples of graph products are \emph{right-angled Artin groups} (raags) and \emph{right-angled Coxeter groups}, where all vertex groups are respectively infinite cyclic, or cyclic groups of order two. 

When $\Gamma$ is totally disconnected, that is the edge set is empty, the resulting graph product is the free product of the vertex groups, and if $\Gamma$ is complete, that is any two vertices are joined by an edge, the resulting graph product is the direct sum of the vertex groups.

Green proved that the class of residually finite groups is closed with respect to forming graph products \cite[Corollary 5.4]{green}. On the other hand, Minasyan proved that raags are conjugacy separable \cite[Theorem 1.1]{ashot_raags}, and this result was later generalised by the second-named author to show that the class of conjugacy separable groups is closed under forming graph products \cite[Theorem 1.1]{mf}.

Closure properties of the class of CSS groups have been studied previously. 
Ribes, Segal and Zaleskii considered the class of quasi-potent CSS groups, and proved that it is closed under taking amalgamations along cyclic groups \cite[Proposition 4.2]{cs_amalgams_zalesskii}. Analogous conclusions were obtained previously by Evans \cite[Lemma 2.4]{evans74}.
Later, Burillo and Martino \cite{quasipotency} showed that the same class is closed with respect to special HNN-extensions along a separable subgroup, amalgamations along a virtually-cyclic subgroup, and direct products.
Allenby and Gregorac \cite[Theorem 2']{allenby} (compare also Bobrovskii and Sokolov \cite{bobr}) showed that the class of CSS groups is closed under amalgamation along a common retract. More recently, Chagas and Zaleskii \cite[Theorem 1.4]{hcs_amalgams_zalesskii} showed that right-angled Artin groups and their virtual retracts have many separability properties, one of which implies being CSS.

\smallskip
The first result of this work is to prove that the class of cyclic subgroup separable groups is closed under forming graph products. This generalises a result of Green, who showed that right-angled Artin groups are CSS \cite[Theorem 2.16]{green}.
\begin{thmx}
\label{proposition:GP_CSS}
\emph{The class of CSS groups is closed under forming graph products}.
\end{thmx}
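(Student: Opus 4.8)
The plan is to induct on the number of vertices $|V\Gamma|$, reducing the whole statement to two closure properties of the class of CSS groups: closure under direct products, and the theorem of Allenby and Gregorac \cite{allenby} that the class is closed under amalgamation along a common retract. The base case of a single vertex is exactly the hypothesis that the vertex groups are CSS, and residual finiteness of $\Gamma\mathcal{G}$ (Green \cite{green}) will be used freely.

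The engine of the argument is an elementary consequence of cyclic subgroup separability that I would isolate first. If $G$ is CSS and $a\in G$ has infinite order, then for every prime power $p^{k}$ there is a finite quotient in which the image of $a$ has order divisible by $p^{k}$: since $a^{p^{k-1}}\notin\langle a^{p^{k}}\rangle$, separability of $\langle a^{p^{k}}\rangle$ produces a finite quotient $\pi$ with $\pi(a)^{p^{k-1}}\notin\langle\pi(a)^{p^{k}}\rangle$, and a direct check shows this forces $p^{k}\mid\ord(\pi(a))$; for finite-order elements the corresponding divisibility is automatic from residual finiteness by passing to the $p$-primary part. Using this I would prove that $A\times B$ is CSS whenever $A$ and $B$ are. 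Given $(a,b)$ and $(x,y)\notin\langle(a,b)\rangle$, if $x\notin\langle a\rangle$ or $y\notin\langle b\rangle$ one separates through the relevant projection using that the factor is CSS. Otherwise $x=a^{s}$, $y=b^{t}$ with no common exponent, which is precisely a congruence obstruction; one then chooses finite quotients of $A$ and $B$ in which $\ord(\pi_{A}(a))$ and $\ord(\pi_{B}(b))$ share a common divisor $p^{k}$ with $p^{k}\nmid(s-t)$. The order-control statement guarantees such quotients exist, and the Chinese Remainder Theorem shows the images of $(x,y)$ and of $\langle(a,b)\rangle$ are then disjoint. Iterating handles all finite direct products.

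For the inductive step, if $\Gamma$ is complete then $\Gamma\mathcal{G}=\prod_{v\in V\Gamma}G_{v}$ and the direct-product step applies. If $\Gamma$ is not complete, I would pick a vertex $v$ that is not adjacent to every other vertex and use the standard decomposition of a graph product as an amalgamated free product
\[
\Gamma\mathcal{G}=\Gamma_{\Gamma\setminus v}\mathcal{G}\ast_{\Gamma_{\link(v)}\mathcal{G}}\Gamma_{\star(v)}\mathcal{G},
\]
where $\Gamma_{\star(v)}\mathcal{G}=G_{v}\times\Gamma_{\link(v)}\mathcal{G}$. The edge group $\Gamma_{\link(v)}\mathcal{G}$ is a parabolic subgroup, hence a retract of $\Gamma_{\Gamma\setminus v}\mathcal{G}$ because $\link(v)$ is an induced subgraph, and a retract of $\Gamma_{\star(v)}\mathcal{G}$ via the projection killing $G_{v}$. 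The choice of $v$ forces both factors to be graph products on strictly fewer vertices, so they are CSS by the inductive hypothesis, and Allenby--Gregorac \cite{allenby} yields that the amalgam $\Gamma\mathcal{G}$ is CSS.

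The main obstacle is the direct-product step for arbitrary CSS groups: earlier work (Burillo--Martino) obtained it only under a quasi-potency hypothesis, and the genuine difficulty is the congruence obstruction, where one must simultaneously control the orders of the images of $a$ and $b$ across finite quotients. The order-control consequence of cyclic subgroup separability is exactly what removes any need for a potency assumption, and establishing it—together with verifying that it meshes correctly with the Chinese Remainder Theorem for both finite and infinite order elements—is the technical heart of the proof. The remaining ingredients, namely the parabolic amalgam decomposition and the retract property of parabolic subgroups, are standard facts about graph products, and the amalgamation step is quoted directly from \cite{allenby}.
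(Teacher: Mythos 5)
Your proof is correct, and its skeleton is the one the paper actually uses: induct on $|V\Gamma|$, handle complete graphs by closure of CSS under direct products, and handle incomplete graphs by splitting the graph product as an amalgam of proper full subgroups over a common retract and quoting the literature result that CSS is preserved by such amalgams (the paper formally abstracts this skeleton as Theorem \ref{general:GP_CSS} and invokes Bobrovskii--Sokolov's Theorem \ref{theorem:bobrovskii}, which is the same input as your Allenby--Gregorac citation; your link-of-a-vertex splitting versus the paper's general separating set in Lemma \ref{lemma:split} is immaterial). Where you genuinely diverge is the direct-product step. Your argument --- the ``order-control'' observation that separability of $\langle a^{p^k}\rangle$ forces finite quotients in which the image of $a$ has order divisible by $p^k$, combined with the Chinese Remainder Theorem to kill the congruence obstruction --- is sound (the verification that $\pi(a)^{p^{k-1}}\notin\langle\pi(a)^{p^k}\rangle$ forces $p^k\mid\ord(\pi(a))$ checks out), and it is essentially Stebe's combinatorial proof \cite[Theorem 4]{stebe}, which the paper explicitly acknowledges as an alternative to its Lemma \ref{lemma:direct_CSS}. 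You are also right that this is exactly what removes the quasi-potency hypothesis of \cite{quasipotency}. The paper instead proves the direct-product step topologically: it shows $\PT(\langle a\rangle\times\langle b\rangle)$ is a restriction of $\PT(A\times B)$ (Lemmas \ref{lemma:restriction_cyclic} and \ref{lemma:restriction_product}) and then uses that finitely generated abelian groups are LERF. The trade-off is that your version is more elementary and self-contained, while the paper's restriction machinery is reused almost verbatim for the pro-$p$ analogue (Lemma \ref{lemma:direct_pCSS}), where a naive CRT argument would need extra care about which cyclic subgroups are $p$-isolated. When writing up the CRT step, do spell out the finite-order subcases (representatives of $s$ modulo $\ord(a)$, and the purely finite case where $\langle(a,b)\rangle$ is separable simply by residual finiteness); these are routine but are where the bookkeeping lives.
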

In particular, Theorem \ref{proposition:GP_CSS} follows from the more general:
\begin{thmx}
\label{general:GP_CSS}\emph{
Let $\mathcal{P}$ be a property of groups that is preserved under finite direct products and under amalgamation over retracts. Then $\mathcal{P}$ is preserved under finite graph products.}
\end{thmx}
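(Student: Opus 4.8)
The plan is to argue by induction on the number of vertices of the (finite) graph $\Gamma$, using the well-known description of a graph product as an iterated amalgam over vertex links. Throughout I assume that every vertex group $G_v$ enjoys property $\mathcal{P}$ and aim to show that $\Gamma\mathcal{G}$ does as well. The base case, in which $\Gamma$ has a single vertex, is immediate, since then $\Gamma\mathcal{G}$ coincides with the unique vertex group, which has $\mathcal{P}$ by assumption.

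For the inductive step I would fix a vertex $v \in V\Gamma$ and split $\Gamma\mathcal{G}$ along $v$. Write $\Gamma_1$ for the full subgraph of $\Gamma$ spanned by $V\Gamma \setminus \{v\}$, let $\star(v)$ be the full subgraph spanned by $v$ together with its neighbours, and let $\link(v)$ be the full subgraph spanned by the neighbours of $v$. Since there are no edges joining $v$ to the vertices of $\Gamma_1$ lying outside $\link(v)$, the subgraphs $\Gamma_1$ and $\star(v)$ cover $\Gamma$ and meet exactly in $\link(v)$, which yields the standard amalgamated decomposition
\[
    \Gamma\mathcal{G} = \Gamma_1\mathcal{G} \ast_{(\link v)\mathcal{G}} (\star v)\mathcal{G}.
\]
Moreover $v$ is adjacent to every vertex of $\link(v)$, so $G_v$ commutes with all of $(\link v)\mathcal{G}$ and hence $(\star v)\mathcal{G} = G_v \times (\link v)\mathcal{G}$.

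It then remains to check the two factors have $\mathcal{P}$ and that the edge subgroup is a retract of each. The graph $\Gamma_1$ has one fewer vertex, so $\Gamma_1\mathcal{G}$ has $\mathcal{P}$ by induction; likewise $\link(v)$ omits $v$ and is therefore strictly smaller, so $(\link v)\mathcal{G}$ has $\mathcal{P}$, and as $\mathcal{P}$ passes to finite direct products the decomposition $(\star v)\mathcal{G} = G_v \times (\link v)\mathcal{G}$ gives $\mathcal{P}$ for the second factor. For the retract condition I would invoke the canonical retraction of a graph product onto the sub-graph-product of any full subgraph—the identity on vertex groups inside the subgraph and trivial on those outside—which exhibits $(\link v)\mathcal{G}$ as a retract of $\Gamma_1\mathcal{G}$; on the other side, the projection $G_v \times (\link v)\mathcal{G} \to (\link v)\mathcal{G}$ retracts the second factor onto the amalgamated subgroup. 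With both factors enjoying $\mathcal{P}$ and $(\link v)\mathcal{G}$ a common retract, the hypothesis that $\mathcal{P}$ is preserved under amalgamation over retracts delivers $\mathcal{P}$ for $\Gamma\mathcal{G}$, closing the induction.

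I do not anticipate a genuine obstacle here: the only substantive inputs are the two closure hypotheses on $\mathcal{P}$, and the argument reduces to a short induction once the structural facts about graph products are set up. The one point that must be handled with care is the amalgamated decomposition together with its retraction structure—verifying that $\Gamma_1$ and $\star(v)$ meet exactly in $\link(v)$ and that the canonical projections are genuine retractions—but these are standard and routine properties of graph products rather than a real difficulty.
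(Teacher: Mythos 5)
Your proof is correct and follows essentially the same route as the paper: induction on $\lvert V\Gamma\rvert$, splitting the graph product as an amalgam of full subgroups over a common full-subgroup retract, and invoking the two closure hypotheses. The paper splits along an arbitrary separating set of vertices rather than along $\link(v)$, but it explicitly records your link-based splitting $G\cong G_{V\Gamma\setminus\{v\}}\ast_{G_{\link(v)}}\bigl(G_{\link(v)}\times G_v\bigr)$ as a special case, so the two arguments are the same in substance. The one case you should address explicitly is a dominating vertex: if $v$ is adjacent to every other vertex then $\Gamma_1=\link(v)$ and your amalgam degenerates (one factor equals the amalgamated subgroup); the conclusion still holds because then $\Gamma\mathcal{G}=G_v\times\Gamma_1\mathcal{G}$ and the direct-product hypothesis plus induction suffice, but as written your inductive step silently assumes the splitting is proper --- either note this degenerate case or choose $v$ non-dominating whenever $\Gamma$ is not complete (the paper sidesteps this by treating the complete-graph case separately via direct products).
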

We refer to Section \ref{section:graph_products} and Section \ref{section:amalgams} for the formal definitions of graph product and amalgamation over retract, respectively.

\smallskip
Let us stress that Theorem \ref{proposition:GP_CSS} cannot be strengthened to (nor can Theorem \ref{general:GP_CSS} be applied to) LERF groups, that is to groups where all finitely generated subgroups are closed in the profinite topology. Indeed, the class of LERF groups is not closed under forming direct products: free groups are LERF \cite{hall}, but the group $F_2 \times F_2$ contains finitely generated subgroups with unsolvable membership problem \cite{mihailova}, hence it cannot be LERF.

\medskip
The notion of separability can be generalised in a natural way by considering only certain kind of quotients: let $\mathcal{C}$ be a class of groups, we then say that a subset $S \subseteq G$ is $\mathcal{C}$-separable in $G$ if for every $g \in G \setminus S$ there is a group $Q \in \mathcal{C}$ and an epimorphism $\pi \colon G \to Q$ such that $\pi(g)$ does not belong to $\pi(S)$. 

Toinet \cite{Toinet} extended Minasyan's result about conjugacy separability in raags, showing that conjugacy classes in raags are separable with respect to the class of all finite $p$-groups. Toinet's result was later generalised by the second-named author \cite{mf}, who proved that the class of $\C$-conjugacy separable groups is closed under forming graph products whenever $\C$ is an extension closed pseudovariety of finite groups. 

The original result of Green on residual finiteness being preserved by graph products was extended by the authors \cite[Theorem A]{bf}, where it was shown that the class of residually $\C$ groups is closed with respect to forming graph products for many classes $\mathcal{C}$ of groups, including solvable or amenable groups.

\medskip
After proving Theorem \ref{proposition:GP_CSS}, we shift our attention to cyclic subgroup separability in the context of pro-$p$ topologies.
We consider the class of all finite $p$-groups, where $p$ is some prime number, and we say that a subset $S \subseteq G$ is $p$-separable in $G$ if it is closed in the pro-$p$ topology of $G$. 
This case is subtler than the one of profinite topology: all subgroups of $\mathbb{Z}=\langle a\rangle$ are closed in the profinite topology, but $\langle a^2\rangle$ is not $p$-separable in $\mathbb{Z}$ for any odd prime $p$.

Following \cite{bardakov, bobr} (compare also Definition \ref{def:p.isolation}) we say that the subgroup $H \leq G$ is $p$-\emph{isolated} in $G$ if for any $g \in G$ and any prime $q$ distinct from $p$ the following implication holds: whenever $g^q \in H$, then already $g\in H$.
We say that $G$ is \emph{cyclic subgroup} $p$-\emph{separable} ($p$-CSS for short) if all its $p$-isolated cyclic subgroups are $p$-separable. 

To be able to easily identify $p$-isolated subgroups, we develop the notion of primitive stability in the context of groups with unique roots (see respectively Definition \ref{definition:plogs} and Definition \ref{def:unique_roots}). Roughly speaking, $G$ is a primitively stable group with unique roots if expressing elements of $G$ as powers of ``smaller'' elements behaves in a predictable manner, similar to infinite cyclic groups. We refer to Section \ref{section:primitive} for the precise definition of primitive stability and its connection to $p$-isolation.

To ease the notation, let $\Ups$ denote the class of primitively stable groups with unique roots. After showing that $\Ups$ is closed under taking graph products (compare Theorem \ref{lemma:primitive_urp_gp}), we prove an analogue of Theorem \ref{proposition:GP_CSS} for pro-$p$ topologies.
\begin{thmx}
    \label{proposition:GP_p-CSS}
\emph{    For every prime number $p$, the class of $p$-CSS groups in $\Ups$ is closed under forming graph products}.
\end{thmx}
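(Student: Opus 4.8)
The plan is to follow the architecture of Theorem~\ref{proposition:GP_CSS} and reduce the assertion to two closure properties of the class $\mathcal{P}$ of groups that are simultaneously $p$-CSS and belong to $\Ups$: closure under finite direct products, and closure under amalgamation over a common retract. Granting these, the conclusion for finite graphs follows from the inductive scheme underlying Theorem~\ref{general:GP_CSS}, based on the decomposition
\[
\Gamma\mathcal{G} = (\Gamma - v)\mathcal{G} \ast_{\link(v)\mathcal{G}} \star(v)\mathcal{G}, \qquad \star(v)\mathcal{G} \cong G_v \times \link(v)\mathcal{G},
\]
for a vertex $v$: every factor is a graph product over a full subgraph, hence lies in $\Ups$ by Theorem~\ref{lemma:primitive_urp_gp}, and $\link(v)\mathcal{G}$ is a retract of both. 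The base case is a single vertex, where $\mathcal{P}$ holds by hypothesis; when $v$ is adjacent to every other vertex the decomposition degenerates to the direct product, and otherwise all three subgraphs are proper, so the inductive hypothesis applies to each factor. Finally I would pass from finite to arbitrary graphs by the usual argument: any pair of elements is supported on a finite full subgraph $\Lambda$, the group $\Lambda\mathcal{G}$ is a retract of $\Gamma\mathcal{G}$, $p$-isolation of a cyclic subgroup is inherited by $\Lambda\mathcal{G}$, and $p$-separable subsets of a retract remain $p$-separable in the ambient group.

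The direct-product step is the more routine one, so I would settle it first. Since the finite $p$-groups form a pseudovariety, forming pro-$p$ completions commutes with finite direct products, and the pro-$p$ topology of $A \times B$ restricts to the pro-$p$ topology of each factor. Combining this with the description of $p$-isolated cyclic subgroups available for groups in $\Ups$ from Section~\ref{section:primitive} --- every element has a unique primitive root, and $\langle h\rangle$ is $p$-isolated exactly when $h$ is a $p$-power of that root --- reduces the closedness of a $p$-isolated $\langle h\rangle \leq A\times B$ to a direct computation with coordinatewise primitive roots.

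The amalgamation step is the heart of the argument and the main obstacle. Let $G = A \ast_C B$ with $C$ a retract of $A$ and of $B$, with $A, B, C \in \mathcal{P}$ and $G \in \Ups$. I would first record that $A$, $B$ and $C$ are themselves retracts of $G$ (for instance $A$ is retracted by extending $\rho_B \colon B \to C \leq A$ by the identity on $A$); since $G$ is residually $p$, these retracts are closed in its pro-$p$ topology and carry the correct induced topology. Given a $p$-isolated cyclic subgroup $\langle h\rangle$, I would then split according to the action of $h$ on the Bass--Serre tree of the splitting. If $h$ is elliptic I may conjugate it into a factor, say $h \in A$; $p$-isolation of $\langle h\rangle$ passes from $G$ to $A$, so $\langle h\rangle$ is $p$-separable in $A$, and the identity $\langle h\rangle = A \cap \rho^{-1}(\langle h\rangle)$ for the retraction $\rho \colon G \to A$ exhibits $\langle h\rangle$ as an intersection of two closed sets, hence closed in $G$.

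The hyperbolic case is where the hypotheses defining $\Ups$ are indispensable, and where I expect the real difficulty to concentrate. Writing $h = h_0^{p^k}$ for the (necessarily hyperbolic) primitive root $h_0$, the index $[\langle h_0\rangle : \langle h\rangle] = p^k$ is a power of $p$; this is precisely why $p$-isolation, and not plain isolation, is the correct hypothesis, and it reduces the problem to separating $\langle h_0\rangle$ and then cutting down by a $p$-power index inside the procyclic closure of $\langle h_0\rangle$. To separate $\langle h_0\rangle$ I would rely on the efficiency of the amalgam in the pro-$p$ topology --- the factors and the edge group being closed retracts with induced pro-$p$ topologies --- together with primitive stability, which forces the centralizer of $h_0$ to be $\langle h_0\rangle$ and makes the powers of $h_0$ behave coherently across compatible finite $p$-quotients. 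The crux, and the step I expect to be technically the most demanding, is to produce for a cyclically reduced hyperbolic $h_0$ and a prescribed $g \notin \langle h_0\rangle$ a finite $p$-quotient of $G$ witnessing non-membership, which amounts to controlling the interaction between the amalgam normal form of $g$ and those $p$-quotients that are compatible with the retractions onto $C$.
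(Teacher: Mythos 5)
Your overall architecture matches the paper's: reduce to closure under finite direct products and under amalgamation over a common retract, run the induction of Theorem~\ref{general:GP_CSS}, and handle infinite graphs by passing to the (finite) support via retractions. Your direct-product step is essentially the paper's Lemma~\ref{lemma:direct_pCSS} (restriction of pro-$p$ topologies to the cyclic subgroups generated by the coordinatewise primitive roots, then Proposition~\ref{lemma:free_abelian_p-CSS} inside $R_1\times R_2$), and your elliptic case via $\langle h\rangle = A\cap\rho^{-1}(\langle h\rangle)$ is a correct variant of Lemma~\ref{remark:shortening_p}.

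The genuine gap is the hyperbolic case of the amalgamation step, which you explicitly flag as ``the most demanding'' and do not carry out: producing a finite $p$-quotient separating $g$ from $\langle h_0\rangle$ when $h_0$ is not conjugate into a factor. The paper does not prove this either; it quotes it. Theorem~\ref{theorem:bobrovskii_p} (Bobrovskii--Sokolov) states that for $G=G_1\ast_R G_2$ an amalgam over a common retract with $G_1,G_2$ residually $p$-finite and $\langle g\rangle$ $p$-isolated, $\langle g\rangle$ fails to be $p$-separable in $G$ \emph{only if} $g$ is conjugate to some $g_i\in G_i$ with $\langle g_i\rangle$ not $p$-separable in $G_i$. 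Combined with Lemma~\ref{lemma:cr_are_small} (a cyclically reduced element whose support is all of $V\Gamma$ cannot be conjugated into a proper full subgroup), this disposes of your hyperbolic case with no further pro-$p$ analysis; the role of $\Ups$ in the paper is confined to the direct-product step and to controlling $p$-isolation via primitive logarithms, not to the amalgam separation itself. Two smaller points: your proposed reduction ``separate $\langle h_0\rangle$, then cut down by $p$-power index inside the procyclic closure'' is an extra step the cited theorem makes unnecessary (it applies directly to the $p$-isolated subgroup $\langle h\rangle$), and the claim that primitive stability forces the centralizer of $h_0$ to equal $\langle h_0\rangle$ is neither established in the paper nor needed. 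As written, your argument is complete only modulo an unproved separation statement which is precisely the content of the external theorem the paper relies on.
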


As the infinite cyclic group belongs to $\Ups$ and it is $p$-CSS for all prime numbers $p$, we obtain:
\begin{corx}
    \label{corollary:raags_pcss}
\emph{For every prime number $p$, right-angled Artin groups are \mbox{$p$-CSS}. In particular, every maximal cyclic subgroup of a right-angled Artin group is $p$-separable in the group for every prime $p$}.
\end{corx}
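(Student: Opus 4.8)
The plan is to derive Corollary \ref{corollary:raags_pcss} directly from Theorem \ref{proposition:GP_p-CSS} by checking that the infinite cyclic group is a suitable base case and that maximal cyclic subgroups lie within the scope of $p$-isolation. A right-angled Artin group is, by definition, a graph product $\Gamma\mathcal{G}$ in which every vertex group $G_v$ is infinite cyclic, so it suffices to verify that $\mathbb{Z}$ belongs to the class of $p$-CSS groups contained in $\Ups$; the assertion that raags are $p$-CSS is then immediate from Theorem \ref{proposition:GP_p-CSS}.

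First I would confirm the two membership facts for $\mathbb{Z}$. That $\mathbb{Z}\in\Ups$ is straightforward: every non-trivial element has a unique primitive root (a generator, up to sign), so roots are unique in the sense of Definition \ref{def:unique_roots}, and the primitive logarithm $\plog$ is just the absolute value of the exponent, which plainly obeys the primitive stability axioms of Definition \ref{definition:plogs}. To see that $\mathbb{Z}$ is $p$-CSS, I would identify its $p$-isolated subgroups: the subgroup $n\mathbb{Z}$ is $p$-isolated precisely when $n$ has no prime divisor other than $p$, that is, when $n=p^k$ for some $k\geq 0$, while the trivial subgroup is $p$-separable because $\mathbb{Z}$ is residually-$p$. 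The subgroups $p^k\mathbb{Z}$ are exactly the basic clopen neighbourhoods of $0$ in the pro-$p$ topology of $\mathbb{Z}$, hence $p$-separable, so $\mathbb{Z}$ is $p$-CSS.

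Next I would treat the \emph{in particular} clause. Since a raag is a graph product of copies of $\mathbb{Z}$, Theorem \ref{lemma:primitive_urp_gp} ensures that it again lies in $\Ups$; in particular it has unique roots and is torsion-free. The key remaining step is to show that every maximal cyclic subgroup $H=\langle h\rangle$ is $p$-isolated. Given $g\in G$ and a prime $q\neq p$ with $g^q\in H$, write $g^q=h^n$. By maximality $h$ is primitive, and writing $g=a^k$ with $a$ primitive I would use unique roots together with the primitive-decomposition machinery of Section \ref{section:primitive} to conclude that $a$ and $h$ have a common primitive root, so that $\langle a\rangle=\langle h\rangle$ and hence $g=a^k\in H$. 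This in fact shows that maximal cyclic subgroups are \emph{isolated}, and therefore $p$-isolated for every prime $p$. As the raag is $p$-CSS, every $p$-isolated cyclic subgroup — in particular every maximal cyclic subgroup — is $p$-separable, which completes the proof.

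The main obstacle is the $p$-isolation of maximal cyclic subgroups: it is here that one must exploit the unique-root and primitive-stability structure of the group rather than merely invoke closure under graph products. Everything else is either definitional (the base case for $\mathbb{Z}$) or a direct application of Theorem \ref{proposition:GP_p-CSS}.
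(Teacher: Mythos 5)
Your proposal is correct and takes essentially the same route as the paper: the first claim is Theorem \ref{proposition:GP_p-CSS} applied to a graph product of copies of $\mathbb{Z}$ after checking that $\mathbb{Z}\in\Ups$ and is $p$-CSS, and the second claim reduces to maximal cyclic subgroups being isolated. The only cosmetic difference is that the paper obtains the latter by citing Lemma \ref{lemma:characterisation_of_p-isolation} (maximal cyclic $\Leftrightarrow$ $\plog_G(g)=1$ $\Leftrightarrow$ isolated), whereas you re-derive that implication from unique roots and Lemma \ref{lemma:common_root}; the substance is identical.
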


Groups in the class $\Ups$ admit a more algebraic characterisation. Let $g\in G$, and define the \emph{radical} of $g$ in $G$ as the subset
\[\Rad_G(g)=\bigl\{r\in G\mid r^a \in \langle g\rangle \text{ for some } a \in\mathbb{Z}\setminus\{0\}\bigr\}.\]
The radical of an element $g\in G$ is not in general a subgroup, but this is the case for groups in $\Ups$. Moreover, we characterise the groups in $\Ups$ as the torsion-free groups where $\Rad_G(g)$ is a cyclic subgroup for every non-trivial element. 
\begin{thmx}\label{thm:C}
\emph{Let $G$ be a torsion-free group. The following are equivalent:
\begin{enumerate}
    \item $G\in\Ups$;
    \item for all non-trivial $g\in G$ the subset $\Rad_G(g)$ is an infinite cyclic subgroup of $G$.
\end{enumerate}
}
\end{thmx}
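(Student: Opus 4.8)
The plan is to prove the two implications separately, exploiting the fact that in a torsion-free group the cyclic subgroup generated by a non-trivial element is infinite, so that all the exponent arithmetic below takes place in copies of $\mathbb{Z}$ and cancellation of non-zero exponents is legitimate.

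First I would settle the easier implication $(2)\Rightarrow(1)$. Assuming $\Rad_G(g)$ is infinite cyclic for every non-trivial $g$, fix such a $g$ and let $z$ generate $\Rad_G(g)\cong\mathbb{Z}$. Since $g\in\Rad_G(g)=\langle z\rangle$ we may write $g=z^{k}$ with $k\neq 0$, and a short argument shows that $z$ is primitive: if $z=u^{m}$ with $|m|\geq 2$, then $u^{mk}=g$ forces $u\in\Rad_G(g)=\langle z\rangle$, so $u=z^{j}$ and $z=z^{jm}$, contradicting $|m|\geq 2$ because $z$ has infinite order. Thus every non-trivial element has a primitive root, unique up to inversion because $\langle z\rangle$ has exactly two generators; this is exactly what primitive stability asks for, and the remaining bookkeeping for $\plog$ reduces to arithmetic in $\mathbb{Z}$. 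Unique roots follow along the same lines: if $x^{n}=y^{n}=g$ with $n\neq 0$ and $g\neq e$ (the case $g=e$ being immediate from torsion-freeness), then $x,y\in\Rad_G(g)=\langle z\rangle$, so writing $x=z^{i}$ and $y=z^{j}$ the equality $z^{in}=z^{jn}$ gives $i=j$ and hence $x=y$.

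For the converse $(1)\Rightarrow(2)$, let $G\in\Ups$ and fix a non-trivial $g$. By primitive stability $g$ has a primitive root $z$, unique up to inversion, with $g=z^{k}$ and $k\neq 0$. The inclusion $\langle z\rangle\subseteq\Rad_G(g)$ is immediate, since $(z^{j})^{k}=z^{jk}=g^{j}\in\langle g\rangle$ for every $j$. For the reverse inclusion take $r\in\Rad_G(g)$, so that $r^{a}=g^{b}=z^{kb}$ for some $a\neq 0$ and some $b$; discarding the trivial case, let $w$ be the primitive root of $r$, say $r=w^{l}$. Then $w^{la}=z^{kb}$, so the two primitive elements $w$ and $z$ share a non-trivial power. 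Here I would invoke the defining feature of $\Ups$, namely that the primitive root of any non-trivial power $w^{p}$ is $w$ itself together with the uniqueness of primitive roots up to inversion. Comparing the primitive roots of the two sides of $w^{la}=z^{kb}$ yields $w=z^{\pm1}$, whence $r=w^{l}\in\langle z\rangle$. This shows $\Rad_G(g)=\langle z\rangle$, an infinite cyclic subgroup.

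The main obstacle is precisely the reverse inclusion in $(1)\Rightarrow(2)$: extracting from $w^{la}=z^{kb}$ the conclusion that the primitive roots $w$ and $z$ agree up to sign. A priori $\Rad_G(g)$ is merely a subset, and it is this single step that simultaneously forces it to be a subgroup and identifies it with $\langle z\rangle$. Everything hinges on the exact form of primitive stability recorded in Definition \ref{definition:plogs}, namely the compatibility of the primitive-root operation with taking powers; once that compatibility is in hand the remainder is arithmetic inside $\mathbb{Z}$, and torsion-freeness enters only to guarantee that $\langle z\rangle$ is infinite so that exponent cancellation is valid.
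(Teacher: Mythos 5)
Your proof is correct and follows essentially the same route as the paper, which derives Theorem \ref{thm:C} from Proposition \ref{lemma:cyclic_radical_criterion}: the radical of $g$ is identified with the cyclic subgroup generated by its primitive root, with the content of Lemma \ref{lemma:common_root} (a shared non-trivial power forces a shared primitive root) supplying the key reverse inclusion in $(1)\Rightarrow(2)$, and the converse reducing to exponent arithmetic inside $\langle z\rangle\cong\mathbb{Z}$. The only cosmetic discrepancy is that with the paper's conventions (roots are taken with positive exponents) the primitive root of a non-trivial element in a unique-roots group is literally unique rather than unique up to inversion, but this does not affect your argument.
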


Exploiting Theorem \ref{thm:C}, we prove that the class $\Ups$ is quite wide: torsion-free hyperbolic groups, (residually) finitely generated torsion-free nilpotent groups, and torsion-free groups hyperbolic relative to groups in $\Ups$, all belong to $\Ups$. In particular, limit groups belong to $\Ups$, being toral relatively hyperbolic. As already stated, the class $\Ups$ is closed under graph products.

\medskip
The paper is organised as follows. In Section \ref{section:proC_topologies} we recall the notion of profinite and $\proC$ topologies on groups, and review the classical results that allow us to use topological methods when working with separability properties; readers familiar with $\proC$ topologies might feel free to skip this section. In Section \ref{section:graph_products} we recall the notation for graph products of groups and review known properties, such as Normal Form Theorem and the definition of cyclically reduced elements and full subgroups. 

In Section \ref{section:amalgams} we introduce the notion of amalgams over a common retract, develop the standard combinatorial framework for this type of free-construction and show how it relates to graph products in a natural way: we show that, as soon as it is not a direct product of its vertex groups, a graph product splits as an amalgam of its proper full subgroups over a common retract. Using the previously developed framework, in Section \ref{section:profinite_case} we prove Theorem \ref{proposition:GP_CSS}. 

In Section \ref{section:unique_roots} we recall the notion of Unique Root property and, using the framework for amalgams over retracts, we show that the the class of groups with Unique Root property - which we denote by $\U$ - is closed under forming graph products. In Section \ref{section:primitive} we introduce the notion of primitive logarithms, primitive roots and primitive stability. In the class $\Ups$ of primitively stable groups with unique roots we can give a characterisation of $p$-isolated cyclic subgroups in terms of the primitive logarithm of the generator of the cyclic subgroup. 

In Section \ref{section:primitive_stabilility_closure} we study the closure properties of the class $\Ups$. In particular, we show that it is closed with respect to taking subgroups, direct products, amalgams over retracts and, consequently, graph products. In Section \ref{section:examples_of_Ups} we prove Theorem \ref{thm:C}, and we show that the class $\Ups$ contains all finitely generated torsion-free nilpotent groups, all torsion-free hyperbolic groups, all limit groups, and certain relatively hyperbolic groups. A proof of Theorem \ref{proposition:GP_p-CSS} is given in Section \ref{section:prop_case}. Finally, in Section \ref{section:p-isolated_elements} we give a characterisation of $p$-isolated cyclic subgroups in graph products of groups.

\section{Pro-$\C$ topologies on groups}
\label{section:proC_topologies}
This section, after a brief paragraph on notation, contains basic facts about $\proC$ topologies on groups; we are including it to make the paper self-contained and experts can feel free to skip it. Proofs of all the statements can be found in the classic book by Ribes and Zalesskii \cite{rz} or in the second named author's thesis \cite{mf_thesis}.

If $G$ is a group, then $e_G$, or $e$ when the group $G$ is clear from the context, denotes the identity element in $G$. For elements $g,h \in G$ we will use $g^h$ to denote $h g h^{-1}$, the $h$-conjugate of $g$. Similarly, for a subgroup $H \leq G$ we will use $g^H$ to denote $\{hgh^{-1} \mid h \in H\}$.
In this note the natural numbers $\mathbb{N}$ include zero.

\smallskip
Let $\mathcal{C}$ be a class of groups and let $G$ be a group. We say that a normal subgroup $N \unlhd G$ is a \emph{co-$\mathcal{C}$ subgroup} of $G$ if $G/N \in \mathcal{C}$, and we denote by 
$\NC(G)$ the set of co-$\mathcal{C}$ subgroups of $G$. 

Consider the following closure properties for a class of groups $\mathcal{C}$:
	\begin{itemize}
	    \item[(c0)] $\mathcal{C}$ is closed under taking finite subdirect 
	    products,
	    \item[(c1)]	$\mathcal{C}$ is closed under taking subgroups,
	    \item[(c2)]	$\mathcal{C}$ is closed under taking finite direct products.
	\end{itemize}
Note that 
\[(c0) \Rightarrow  (c2)\qquad\text{and}\qquad (c1)+ (c2) \Rightarrow (c0).\]
If the class $\mathcal{C}$ satisfies (c0) then, for every group $G$, the set $\NC(G)$ is closed under finite intersections, that is to say, if $N_1, N_2\in\NC(G)$ then also $N_1\cap N_2\in\NC(G)$. This
implies that $\NC(G)$ is a base at $e_G$ for a topology on $G$.

Hence the group $G$ can be equipped with a group topology, where the base of open sets is given by 
\[\{gN \mid g\in G,  N \in \NC(G)\}.\]
This topology, denoted by pro-$\mathcal{C}(G)$, is called the \emph{pro-$\mathcal{C}$ topology} on $G$.

If the class $\mathcal{C}$ satisfies (c1) and (c2), or equivalently, (c0) and (c1), then one can easily see that equipping a group $G$ with its pro-$\mathcal{C}$ topology is a faithful functor from the category of groups to the category of topological groups. 
\begin{lemma}
	\label{lemma:continuous}
	Let $\C$ be a class of groups satisfying (c1) and (c2). Given groups $G$ and $H$, every homomorphism $\varphi\colon G\to H$ is a continuous map with respect to the corresponding pro-$\mathcal{C}$ topologies. Furthermore, if $\varphi$ is an isomorphisms, then it is a homeomorphism.
\end{lemma}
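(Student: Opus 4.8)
The plan is to check continuity directly on a basis of the topology and then obtain the homeomorphism statement essentially for free by applying the continuity result to the inverse map. Since $\C$ satisfies (c1) and (c2) it also satisfies (c0), so both $\proC(G)$ and $\proC(H)$ are genuine group topologies, with bases given by the cosets $gN$ with $N \in \NC(G)$, and $hM$ with $M \in \NC(H)$, respectively. To prove that $\varphi$ is continuous it therefore suffices to show that $\varphi^{-1}(hM)$ is open in $G$ for every $h \in H$ and every $M \in \NC(H)$.

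First I would dispose of the trivial case: if $\varphi^{-1}(hM) = \emptyset$ then it is open and there is nothing to prove. Otherwise I pick some $g_0 \in \varphi^{-1}(hM)$, so that $\varphi(g_0) \in hM$ and hence $hM = \varphi(g_0)M$. A direct computation then gives
\[\varphi^{-1}(hM) = \varphi^{-1}\bigl(\varphi(g_0)M\bigr) = g_0\,\varphi^{-1}(M),\]
using that $\varphi(g) \in \varphi(g_0)M$ is equivalent to $g_0^{-1}g \in \varphi^{-1}(M)$.

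The crux of the argument is to verify that $N := \varphi^{-1}(M)$ belongs to $\NC(G)$. It is a normal subgroup of $G$, being the preimage of a normal subgroup under a homomorphism, and $\varphi$ induces an injective homomorphism $G/N \hookrightarrow H/M$ by the isomorphism theorems. Since $H/M \in \C$ and $\C$ is closed under taking subgroups by (c1), the image of $G/N$ in $H/M$ lies in $\C$, whence $G/N \in \C$ and $N \in \NC(G)$. Consequently $\varphi^{-1}(hM) = g_0 N$ is a basic open set, so $\varphi$ is continuous. I expect this step -- the identification of the preimage of a coset with a coset of the co-$\C$ subgroup $\varphi^{-1}(M)$, together with the appeal to (c1) -- to be the only point requiring any care; everything else is bookkeeping, and it is worth noting that closure under subgroups is exactly what makes $\varphi^{-1}(M)$ a co-$\C$ subgroup.

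Finally, for the homeomorphism statement, if $\varphi$ is an isomorphism then its inverse $\varphi^{-1}\colon H \to G$ is again a group homomorphism, hence continuous by the first part applied with the roles of $G$ and $H$ exchanged. A continuous bijection whose inverse is continuous is a homeomorphism, which completes the plan.
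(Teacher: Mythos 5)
Your proof is correct and is the standard argument: the preimage of a basic open coset $hM$ is either empty or equals $g_0\varphi^{-1}(M)$, and $\varphi^{-1}(M)\in\NC(G)$ because $G/\varphi^{-1}(M)$ embeds in $H/M$ and $\C$ is closed under subgroups. The paper does not spell out a proof of this lemma (it defers to Ribes--Zalesskii and the second author's thesis), but the argument it relies on is exactly the one you give, including the observation that (c1) is the property doing the real work.
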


A set $X \subseteq G$ is $\mathcal{C}$-\emph{closed} in $G$ if $X$ is closed in pro-$\mathcal{C}(G)$: for every $g \notin X$ there exists $N \in \NC(G)$ such that the open set $gN$ does not intersect $X$, that is, 
$gN \cap X = \emptyset$.
This is equivalent to $gN \cap XN=\emptyset$, and hence $\varphi(g) \notin \varphi(X)$ in $G/N$, where $\varphi \colon G \twoheadrightarrow G/N$ is the canonical projection onto the quotient $G/N$. 
Accordingly, a set is $\mathcal{C}$-\emph{open} in $G$ if it is open in pro-$\mathcal{C}(G)$.

In this paper we will only consider the class $\C$ of all finite groups or of all finite $p$-groups, and therefore we will assume that $\C$ is closed under subgroups, finite direct products, quotients and extensions. A class of groups satisfying these properties is also called an extension-closed pseudovariety of finite groups.

\smallskip
In the following lemma we collect known facts about open and closed subgroups, in particular \cite[Theorem 3.1, Theorem 3.3]{MHall50}.

\begin{lemma}
\label{lemma:C-open}
\label{lemma:C-closed}

Let $G$ be a group and let $H \leq G$. Then
\begin{itemize}
    \item[(i)] $H$ is $\mathcal{C}$-open in $G$ if and only if there is $N \in \NC(G)$ such that $N \leq H$; moreover, every $\mathcal{C}$-open subgroup is of finite index in $G$ and it is $\mathcal{C}$-closed in $G$;
    \item[(ii)] $H$ is $\C$-closed in $G$ if and only if $H$ is an intersection of open subgroups.
\end{itemize}
\end{lemma}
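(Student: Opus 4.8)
The plan is to convert both topological statements into the combinatorial condition that defines $\mathcal{C}$-closedness, exploiting throughout that $\NC(G)$ consists of \emph{normal} subgroups and forms a base of neighbourhoods of $e_G$ for a group topology.

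For part (i), I would first dispose of the characterisation of openness. If $H$ is $\mathcal{C}$-open, then since $e_G \in H$ and the sets $N \in \NC(G)$ form a base at $e_G$, some basic neighbourhood must satisfy $N = e_G N \subseteq H$, so $N \leq H$. Conversely, if $N \leq H$ for some $N \in \NC(G)$, then $H = \bigcup_{h \in H} hN$ is a union of basic open sets and hence open. The finite-index claim is then immediate, since $[G : H] \leq [G : N] < \infty$ because $G/N \in \mathcal{C}$ is finite. For the assertion that a $\mathcal{C}$-open subgroup is also $\mathcal{C}$-closed, I would use that $\mathrm{pro}\text{-}\mathcal{C}(G)$ is a group topology, so left translation is a homeomorphism: the complement $G \setminus H$ is the union of the cosets $gH$ with $g \notin H$, each of which is therefore open, whence $G \setminus H$ is open and $H$ is closed.

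For part (ii), the backward direction is immediate: by part (i) every open subgroup is $\mathcal{C}$-closed, and an arbitrary intersection of closed sets is closed. The forward direction is the only step requiring an idea. Assuming $H$ is $\mathcal{C}$-closed, I would consider the family $\{HN \mid N \in \NC(G)\}$. Normality of $N$ makes each $HN$ a subgroup, and since $N \leq HN$ with $N \in \NC(G)$, part (i) shows $HN$ is open. Clearly $H \subseteq \bigcap_{N} HN$. For the reverse inclusion, take $g \notin H$; $\mathcal{C}$-closedness supplies $N \in \NC(G)$ with $gN \cap H = \emptyset$, and a one-line coset computation (if $g = hn$ with $h \in H$, $n \in N$, then $h \in gN \cap H$) shows $g \notin HN$. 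Hence $H = \bigcap_{N} HN$, exhibiting $H$ as an intersection of open subgroups.

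The genuinely nontrivial point is the forward direction of (ii): one must identify the correct approximating family, namely the subgroups $HN$, and verify both that each $HN$ is an \emph{open subgroup} (which relies on normality of $N$ to guarantee $HN$ is a group, and on part (i) to guarantee openness) and that the intersection collapses exactly to $H$ (which is precisely where the hypothesis that $H$ is $\mathcal{C}$-closed is consumed). Everything else is routine translation between the topological and combinatorial descriptions.
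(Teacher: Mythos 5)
Your proof is correct; the paper itself gives no argument for this lemma, simply citing the classical results of M.~Hall (Theorems 3.1 and 3.3 of the referenced paper), and your argument is precisely that standard one: basic neighbourhoods at $e_G$ for the openness criterion, translation-invariance for ``open implies closed,'' and the family $\{HN \mid N \in \NC(G)\}$ of open subgroups whose intersection recovers a $\mathcal{C}$-closed $H$. No gaps; the finite-index claim legitimately uses that $\mathcal{C}$ consists of finite groups, which is the standing assumption in the paper.
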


\begin{remark}
    \label{remark:useful}
    Let $G_1, G_2$ be groups and suppose that $H_1 \leq G_1$, $H_2 \leq G_2$ are $\C$-closed in $G_1$ and $G_2$ respectively. Then $H_1 \times H_2$ is $\C$-closed in $G_1 \times G_2$.
\end{remark}

\subsection{Restrictions of pro-$\mathcal{C}$ topologies}
Let $G$ be a group and let $H \leq G$. We say that that $\proC(H)$ is a \emph{restriction} of $\proC(G)$ if a subset $X \subseteq H$ is $\C$-closed in $H$ if and only if it is $\C$-closed in $G$. Note that if $\proC(H)$ is a restriction of $\proC(G)$ then $H$ is $\C$-closed in $G$ as $H$ is $\C$-closed in $H$ by definition. 

\begin{lemma}
    \label{lemma:restriction}
    Let $G$ be a group and let $H \leq G$ be a subgroup. Then $\proC(H)$ is a restriction of $\proC(G)$ if and only if every $N \in \NC(H)$ is $\C$-closed in $G$.
\end{lemma}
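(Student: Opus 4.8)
The plan is to prove both implications of the stated equivalence, after first recording a general fact that makes one half of the definition of restriction automatic. By Lemma~\ref{lemma:continuous} the inclusion $H \hookrightarrow G$ is continuous for the respective pro-$\mathcal{C}$ topologies, so the preimage of a $\mathcal{C}$-closed set is $\mathcal{C}$-closed; applied to a subset $X \subseteq H$ that is $\mathcal{C}$-closed in $G$, this yields that $X = X \cap H$ is $\mathcal{C}$-closed in $H$. Hence $\proC(H)$ is a restriction of $\proC(G)$ precisely when the converse holds, namely when every $\mathcal{C}$-closed subset of $H$ is already $\mathcal{C}$-closed in $G$, and this is the implication I would actually establish.

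For the forward implication, suppose $\proC(H)$ is a restriction of $\proC(G)$ and let $N \in \NC(H)$. Since $N$ contains itself as a co-$\mathcal{C}$ subgroup, it is $\mathcal{C}$-open in $H$, and therefore $\mathcal{C}$-closed in $H$ by Lemma~\ref{lemma:C-open}(i). The restriction hypothesis then forces $N$ to be $\mathcal{C}$-closed in $G$, as desired.

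The backward implication is the main step. Assume every $N \in \NC(H)$ is $\mathcal{C}$-closed in $G$, and let $X \subseteq H$ be $\mathcal{C}$-closed in $H$. I would use the standard description of closures via the neighbourhood basis $\NC(H)$: the closure of $X$ in $\proC(H)$ equals $\bigcap_{M \in \NC(H)} XM$, so that $X = \bigcap_{M \in \NC(H)} XM$. The crucial observation is that each $XM$ is a union of cosets of $M$ in $H$, and since $H/M \in \mathcal{C}$ is finite the subgroup $M$ has finite index in $H$; thus $XM$ is a \emph{finite} union of cosets $x_1 M, \dots, x_k M$ with $x_i \in H \subseteq G$. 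By hypothesis $M$ is $\mathcal{C}$-closed in $G$, and because the pro-$\mathcal{C}$ topology on $G$ is a group topology (so left translation is a homeomorphism), each translate $x_i M$ is $\mathcal{C}$-closed in $G$; a finite union of closed sets is closed, whence $XM$ is $\mathcal{C}$-closed in $G$. Finally $X$, being an intersection of the $\mathcal{C}$-closed sets $XM$, is $\mathcal{C}$-closed in $G$.

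The only genuine obstacle is the finiteness used in the backward direction: an \emph{arbitrary} union of cosets of $M$ need not be closed, so the argument relies essentially on $M$ having finite index in $H$ (equivalently, on $\mathcal{C}$ consisting of finite groups), which is exactly what turns $XM$ into a finite union of translates of a single $\mathcal{C}$-closed subgroup. Everything else is the routine interplay between the neighbourhood basis $\NC$, the group-topology property that makes translation a homeomorphism, and the stability of closed sets under finite unions and arbitrary intersections.
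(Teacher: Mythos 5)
Your proof is correct and follows essentially the same route as the paper: the forward direction is immediate, and the backward direction writes a closed $X \subseteq H$ as an intersection of sets $XM$ with $M \in \NC(H)$, each of which is a finite union of translates of the $\mathcal{C}$-closed subgroup $M$ (finiteness coming from $|H:M|<\infty$) and hence $\mathcal{C}$-closed in $G$. Your explicit remark that continuity of the inclusion handles the other half of the definition of restriction is a point the paper leaves implicit, but the substance of the argument is identical.
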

\begin{proof}
    Suppose that $\proC(H)$ is a restriction of $\proC(G)$ and let $N \in \NC(H)$ be arbitrary. Clearly, $N$ is $\C$-closed in $H$ and thus it is $\C$-closed in $G$.
    
    Now suppose that every $N \in \NC(H)$ is $\C$-closed in $G$ and let $X \subseteq H$ be $\C$-closed in $H$. Obviously, for every $g \in H$ there is some $N_g \in \NC(H)$ such that $g \notin XN_g$. As $|H:N_g| < \infty$ we see that there are $g_1, \dots, g_n \in X$ such that $XN_g = \bigcup_{i=1}^n g_i N_g$. This means that $XN_g$ is a finite union of sets $\C$-closed in $G$ and therefore it is $\C$-closed in $G$. In particular, we see that $X = \cap_{g \in H\setminus X} XN_g$ is an intersection of $\C$-closed sets in $G$, hence it is $\C$-closed in $G$.
\end{proof}

One can easily show the following by using the proof of \cite[Lemma 3.1.5]{rz}
\begin{lemma}
    \label{lemma:retract_restriction}
    Let $G$ be a residually $\C$ group and suppose that $R \leq G$ is a retract. Then $\proC(R)$ is a restriction of $\proC(G)$.
\end{lemma}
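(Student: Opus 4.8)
The plan is to invoke Lemma \ref{lemma:restriction}: since $\proC(R)$ is a restriction of $\proC(G)$ exactly when every $N \in \NC(R)$ is $\C$-closed in $G$, it suffices to fix an arbitrary $N \in \NC(R)$ and prove that it is $\C$-closed in $G$. Let $\rho \colon G \to R$ denote the retraction, so that $\rho$ is an epimorphism with $\rho|_R = \id_R$. First I would form $K = \rho^{-1}(N)$, which is the kernel of the composite epimorphism $G \xrightarrow{\rho} R \to R/N$ onto $R/N \in \C$; hence $K \in \NC(G)$, so $K$ is $\C$-open and in particular $\C$-closed in $G$ by Lemma \ref{lemma:C-open}. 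A direct check using $\rho|_R = \id_R$ gives $K \cap R = N$. Thus the problem reduces to showing that the retract $R$ itself is $\C$-closed in $G$: once this is known, $N = K \cap R$ is an intersection of two $\C$-closed sets and is therefore $\C$-closed.

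To prove that $R$ is $\C$-closed I would mimic the argument of \cite[Lemma 3.1.5]{rz}. Fix $g \in G \setminus R$; since $\rho(g) \in R$ while $g \notin R$, the element $x = g^{-1}\rho(g)$ is non-trivial. As $G$ is residually $\C$, choose $M \in \NC(G)$ with $x \notin M$. The crucial point is to replace $M$ by a $\rho$-invariant co-$\C$ subgroup: set $N' = M \cap \rho^{-1}(M \cap R)$. Here $\rho^{-1}(M \cap R)$ is co-$\C$ because $R/(M \cap R) \cong RM/M$ embeds in $G/M \in \C$ and $\C$ is subgroup-closed, so $N' \in \NC(G)$; and one verifies $\rho(N') \subseteq N'$ directly from $\rho|_R = \id_R$. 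Since $N' \leq M$ we still have $x \notin N'$. Finally, if $g \in RN'$, then writing $g = rn$ with $r \in R$ and $n \in N'$ yields $x = g^{-1}\rho(g) = n^{-1}\rho(n)$, which lies in $N'$ by $\rho$-invariance, contradicting $x \notin N'$. Hence $gN' \cap R = \emptyset$, which proves that $R$ is $\C$-closed.

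The only delicate step is the construction of the $\rho$-invariant separating subgroup $N'$: a generic $M \in \NC(G)$ need not satisfy $\rho(M) \subseteq M$, and without this invariance the identity $g^{-1}\rho(g) = n^{-1}\rho(n)$ cannot be used to locate $x$ inside the separating subgroup. Everything else is routine bookkeeping with preimages and the defining property $\rho|_R = \id_R$. I expect the whole argument to be short, exactly as the phrase ``one can easily show'' suggests, with this invariance trick being the reusable idea borrowed from the proof of \cite[Lemma 3.1.5]{rz}.
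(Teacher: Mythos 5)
Your proof is correct and follows exactly the route the paper intends: the paper gives no written proof but defers to the argument of \cite[Lemma 3.1.5]{rz}, which is precisely the $\rho$-invariance trick you carry out (replacing a separating $M\in\NC(G)$ by $N'=M\cap\rho^{-1}(M\cap R)$ to show $R$ is $\C$-closed, then writing $N=\rho^{-1}(N)\cap R$ and invoking Lemma \ref{lemma:restriction}). All the verifications check out, including $\rho(N')\subseteq N'$, $N'\in\NC(G)$ via $G/\rho^{-1}(M\cap R)\cong R/(M\cap R)\hookrightarrow G/M$, and $K\cap R=N$.
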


\begin{lemma}
    \label{lemma:restriction_product}
    Let $G_1, G_2$ be groups and let $H_1 \leq G_1$, $H_2 \leq G_2$ be their subgroups. Suppose that $\proC(H_1)$ is a restriction of $\proC(G_1)$ and that $\proC(H_2)$ is a restriction of $\proC(G_2)$. Then $\proC(H_1 \times H_2)$ is a restriction of $\proC(G_1 \times G_2)$.
\end{lemma}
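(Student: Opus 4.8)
The plan is to verify the criterion of Lemma~\ref{lemma:restriction}: it suffices to show that every $N \in \NC(H_1 \times H_2)$ is $\C$-closed in $G_1 \times G_2$. So I would fix such an $N$, write $q \colon H_1 \times H_2 \twoheadrightarrow Q$ for the projection onto the finite quotient $Q = (H_1 \times H_2)/N \in \C$, and work towards exhibiting $N$ as a finite union of $\C$-closed sets.

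The obstruction is that $N$ need not split as a direct product of subgroups of $H_1$ and $H_2$, so Remark~\ref{remark:useful} cannot be applied to $N$ directly. To bypass this I would extract from $N$ a genuine product subgroup of finite index. Define $M_1 \unlhd H_1$ and $M_2 \unlhd H_2$ by $M_1 \times \{e\} = N \cap (H_1 \times \{e\})$ and $\{e\} \times M_2 = N \cap (\{e\} \times H_2)$. Since $\C$ is closed under subgroups, each $H_i/M_i$ embeds into $Q$ via $q$ and hence lies in $\C$, so $M_i \in \NC(H_i)$. By hypothesis together with Lemma~\ref{lemma:restriction}, $M_i$ is $\C$-closed in $G_i$, and Remark~\ref{remark:useful} then yields that $M_1 \times M_2$ is $\C$-closed in $G_1 \times G_2$.

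It remains to promote this conclusion from $M_1 \times M_2$ to $N$. By construction $M_1 \times M_2 \leq N$, and as $(H_1 \times H_2)/(M_1 \times M_2) \cong (H_1/M_1) \times (H_2/M_2)$ is finite (using closure of $\C$ under finite direct products), the index of $M_1 \times M_2$ in $N$ is finite. Hence $N = \bigcup_{i=1}^{k} g_i (M_1 \times M_2)$ for finitely many $g_i \in H_1 \times H_2 \subseteq G_1 \times G_2$. Because the pro-$\C$ topology turns $G_1 \times G_2$ into a topological group, each left translate $g_i(M_1 \times M_2)$ of a $\C$-closed set is again $\C$-closed, and a finite union of $\C$-closed sets is $\C$-closed; thus $N$ is $\C$-closed in $G_1 \times G_2$, completing the reduction.

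I expect the crux to be the middle step: recognising that, even when $N$ is not itself a product, it sandwiches the product subgroup $M_1 \times M_2$ as a finite-index normal subgroup — this is exactly the point where closure of $\C$ under subgroups is used. Once this is in place, the remaining manipulation of finitely many cosets, together with the general topological-group facts about translates and finite unions of closed sets, is routine.
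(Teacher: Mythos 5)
Your proposal is correct and follows essentially the same route as the paper's proof: both intersect $N$ with the two factors to obtain $N_i = N \cap H_i \in \NC(H_i)$, use the restriction hypothesis together with Remark~\ref{remark:useful} to see that $N_1 \times N_2$ is $\C$-closed in $G_1 \times G_2$, and then upgrade to $N$ via the finite-index inclusion $N_1 \times N_2 \leq N$ and a finite union of closed cosets. The only difference is that you spell out the final coset argument, which the paper leaves implicit.
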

\begin{proof}
    Following Lemma \ref{lemma:restriction} we see that $H_1$ is $\C$-closed in $G_1$, similarly $H_2$ is $\C$-closed in $G_2$. Let $N \in \NC(H_1 \times H_2)$ be arbitrary. Set $N_1 = N \cap H_1$ and $N_2 = N \cap H_2$. As the class $\C$ is closed under taking subgroups, we see that $N_1 \in \NC(H_1)$ and $N_2 \in \NC(H_2)$. As $\C$ is closed under taking direct products, $N_1 \times N_2 \in \NC(H_1 \times H_2)$ and therefore $N_1 \times N_2$ is of finite index in $H_1 \times H_2$ by Lemma \ref{lemma:C-open}. Consequently, $N_1 \times N_2$ is of finite index in $N$. 
    
    As $N_1$ is $\C$-closed in $H_1$ and $\proC(H_1)$ is a restriction of $G_1$, we see that $N_1$ is $\C$-closed in $G_1$. Similarly, $N_2$ is $\C$-closed in $G_2$. Using Remark \ref{remark:useful} we see that $N_1 \times N_2$ is $\C$-closed in $G_1 \times G_2$. As $N_1 \times N_2$ is a subgroup of finite index in $N$ we see that $N$ is $\C$-closed in $G_1 \times G_2$. It follows by Lemma \ref{lemma:restriction} that $\proC(H_1 \times H_2)$ is a restriction of $\proC(G_1 \times G_2)$.
\end{proof}
\section{Graph products}
\label{section:graph_products}
    We recall here some terminology and facts about graph products that will be used in
this paper.
Let $G = \Gamma\mathcal{G}$ be a graph product. Every element $g \in G$ can be obtained as a product of a sequence $W \equiv (g_1, g_2, \dots , g_n)$, where each $g_i$ belongs to some $G_{v_i} \in \mathcal{G}$. We say that $W$ is a \emph{word} in $G$ and that the elements $g_i$ are its \emph{syllables}. The \emph{length} of a word is the number of
its syllables, and it is denoted by $\lvert W\rvert$.

Transformations of the three following types can be defined on words in graph products:
\begin{enumerate}
	\item[(T1)]	remove the syllable $g_i$ if $g_i =e_{G_v}$, where $v \in V$ and $g_i \in G_v$,
	\item[(T2)]	remove two consecutive syllables $g_i, g_{i+1}$ belonging to the same vertex group $G_v$ and replace them by the single syllable $g_i g_{i+1} \in G_v$,
	\item[(T3)] interchange two consecutive syllables $g_i \in G_u$ and $g_{i+1} \in G_v$ if $\{u, v\} \in E$.
\end{enumerate}
The last transformation is also called \emph{syllable shuffling}. Note that transformations (T1) and (T2) decrease the length of a word, whereas transformation (T3) preserves it. Thus, applying finitely many of these transformations to a word $W$, we can obtain a word $W'$ which is of minimal length and that represents the same element in $G$.

For $1 \leq i < j \leq n$, we say that syllables $g_i, g_j$ can be \emph{joined together} if they belong to the same vertex group and \lq everything in between commutes with them\rq. More formally: $g_i, g_j \in G_v$ for some $v \in V$ and for all $i < k < j$ we have that $g_k \in G_{v_k}$ for some $v_k\in\link(v) := \{u \in V \mid \{u,v\}\in E\}$. 
In this case the words 
$$W \equiv (g_1, \dots, g_{i-1}, g_i, g_{i+1}, \dots, g_{j-1}, g_j, g_{j+1}, \dots, g_n)$$ and $$W' \equiv (g_1, \dots, g_{i-1}, g_i g_j, g_{i+1}, \dots, g_{j-1}, g_{j+1}, \dots, g_{n})$$ represent the same group element in $G$, and the length of the word $W'$ is strictly shorter than $W$.

We say that a word $W \equiv (g_1, g_2, \dots , g_n)$ is \emph{reduced} if it is either the empty word, or if $g_i \neq e$ for all $i$ and no two distinct syllables can be joined together. As it turns out, the notion of being reduced and the notion of being of minimal length coincide, as it was proved by Green \cite[Theorem~3.9]{green}:
\begin{theorem}[Normal Form Theorem]
	\label{nft}
Every element $g$ of a graph product $G$ can be represented by a reduced word. Moreover, if two reduced words $W, W'$ represent the same element in the group $G$, then $W$ can be obtained from $W'$ by a finite sequence of syllable shufflings. In particular, the length of a reduced word is minimal among all words representing $g$, and a reduced word represents the trivial element if and only if it is the empty word.
\end{theorem}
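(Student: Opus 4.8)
The plan is to establish existence directly and then reduce uniqueness to the construction of a faithful combinatorial action, following the Van der Waerden method. For existence, note that by definition every $g\in G$ is a product of syllables, hence represented by some word; since transformations (T1) and (T2) strictly decrease length and (T3) preserves it, any word of minimal length representing $g$ is necessarily reduced, so a reduced representative always exists.

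For the uniqueness statement, let $S$ be the set of equivalence classes $[W]$ of reduced words, where $W\sim W'$ means $W'$ is obtained from $W$ by a finite sequence of syllable shufflings (T3). The heart of the argument is to build an action of $G$ on $S$. For a vertex $v$ and $g\in G_v$, say that a reduced word $W\equiv(g_1,\dots,g_n)$ \emph{begins with} $g_i\in G_v$ if $g_i$ can be shuffled to the front, i.e. $v_k\in\link(v)$ for all $k<i$. Because $v\notin\link(v)$ (the graph is simplicial) and $W$ is reduced, such a syllable is unique up to the equivalence $\sim$: two syllables $g_i,g_j\in G_v$ both shuffleable to the front would be joinable, contradicting reducedness. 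I then define a permutation $\lambda_g$ of $S$ by
\[
\lambda_g([W]) =
\begin{cases}
[(g,g_1,\dots,g_n)] & \text{no front syllable in } G_v,\ g\neq e,\\
[(gg_i,g_1,\dots,\widehat{g_i},\dots,g_n)] & W \text{ begins with } g_i,\ gg_i\neq e,\\
[(g_1,\dots,\widehat{g_i},\dots,g_n)] & W \text{ begins with } g_i,\ gg_i=e,\\
[W] & g=e,
\end{cases}
\]
where $\widehat{g_i}$ denotes omission. I would then verify: (a) $\lambda_g$ is well defined on $\sim$-classes, independently of the chosen representative and of the chosen front syllable; (b) $g\mapsto\lambda_g$ is a homomorphism $G_v\to\mathrm{Sym}(S)$; and (c) for every edge $\{u,v\}\in E\Gamma$ the actions of $G_u$ and $G_v$ commute. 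By the universal property of the free product these vertex actions assemble into a homomorphism out of $\ast_{v}G_v$, and (c) guarantees it factors through the defining relations, yielding a genuine action $G\to\mathrm{Sym}(S)$.

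Granting the action, the conclusion is immediate. For any reduced word $W\equiv(g_1,\dots,g_n)$ representing $g$, a straightforward induction on $n$ gives $\lambda_g([\emptyset])=\lambda_{g_1}\cdots\lambda_{g_n}([\emptyset])=[W]$, since reducedness ensures that each successive left multiplication merely prepends a syllable, with no merging or cancellation. Thus $[W]$ depends only on $g$, so any two reduced words representing $g$ differ by syllable shufflings. Taking $g=e$ forces $[W]=[\emptyset]$, whence a reduced word representing the identity is empty; and minimality of the length of reduced words follows because (T1)–(T3) never increase length and any non-reduced word can be shortened.

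The main obstacle is the case analysis behind (a)–(c): checking that $\lambda_g$ respects the shuffling equivalence, that multiplication within a vertex group is preserved, and that commuting vertex groups induce commuting permutations. The crucial structural fact feeding all three is the uniqueness, up to $\sim$, of the front syllable in a fixed vertex group, which is exactly what reducedness provides. I note as an alternative the rewriting-theoretic route: the length-reducing relation generated by (T1) and (T2), taken modulo the equivalence (T3), is terminating and can be shown locally confluent, so Newman's lemma delivers unique normal forms up to shuffling; however, the explicit action above is cleaner to make fully rigorous and is the form best suited to later use.
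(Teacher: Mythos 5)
Your approach is sound, but note that the paper does not actually prove this statement: Theorem \ref{nft} is quoted verbatim from Green's thesis (\cite[Theorem 3.9]{green}), so there is no in-paper argument to compare against. Your van der Waerden--style proof --- letting $G$ act on the set $S$ of shuffling-classes of reduced words and reading off the normal form from $\lambda_g([\emptyset])$ --- is a standard and correct route to this result, and the key structural observation is exactly the right one: in a reduced word at most one syllable from a given $G_v$ can be shuffled to the front, since any earlier syllable from $G_v$ itself obstructs the shuffle ($v\notin\link(v)$). Your deduction of the theorem from the action is also correct, including the induction showing $\lambda_{g_1}\cdots\lambda_{g_n}([\emptyset])=[W]$: reducedness of $W$ rules out a front syllable in $G_{v_k}$ appearing in the suffix, precisely because such a syllable would be joinable with $g_k$.

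What you have is an outline rather than a finished proof: items (a)--(c) are flagged but not carried out, and one point you should make explicit is that the words output by $\lambda_g$ are themselves reduced. This is true but needs the front-syllable observation again: in the cases where $g_i$ is absorbed or cancelled, removing $g_i$ from its position cannot create a newly joinable pair $g_a,g_b\in G_u$ with $a<i<b$, because $g_i$ being shuffleable to the front forces $u\in\link(v)$ (as $g_a$ precedes $g_i$), so $g_i$ was never the obstruction between $g_a$ and $g_b$; and in the prepending case, $(g,g_1,\dots,g_n)$ is reduced exactly because $W$ has no front syllable in $G_v$. With that supplied, the remaining verifications (well-definedness on $\sim$-classes, the homomorphism property on each $G_v$, and commutation of adjacent vertex actions) are genuinely routine case checks, and the argument is complete.
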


Thanks to Theorem \ref{nft} the following are well defined. Let $g \in G$ and let $W \equiv (g_1,\dots, g_n)$ be a reduced word representing $g$. We define the \emph{length} of $g$ in $G$ to be $|g| = n$
and the \emph{support} of $g$ in $G$ to be
 \begin{displaymath}
    \supp(g) = \{v \in V | \exists i \in \{1,\dots, n\} \mbox{ such that } g_i \in G_v\setminus \{e\}\}. 
 \end{displaymath}
We define $\FL(g) \subseteq V\Gamma$ as the set of all $v \in V\Gamma$ such that there is a reduced word $W$ that represents the element $g$ and starts with a syllable from $G_v$. Similarly, we define $\LL(g) \subseteq V\Gamma$ as the set of all $v \in V\Gamma$ such that there is a reduced word $W$ that represents the element $g$ and ends with a syllable from $G_v$. Note that $\FL(g) = \LL(g^{-1})$.

Let $x, y \in G$ and let $W_x \equiv (x_1, \dots ,x_n),W_y \equiv (y_1 \dots, y_m)$ be reduced expressions for $x$ and $y$, respectively.
We say that the product $xy$ is a \emph{reduced product} if the word $(x_1, \dots, x_n, y_1, \dots, y_m)$ is reduced. Obviously, $xy$ is a reduced product if and only if $|xy| = |x| + |y|$ or, equivalently, if $\LL(x) \cap \FL(y) = \emptyset$. We can naturally extend this definition: for $g_1, \dots, g_n \in G$ we say that the product $g_1 \dots g_n$ is reduced if $|g_1 \dots g_n| = |g_1| + \dots + |g_n|$.

Every subset of vertices $X \subseteq V\Gamma$ induces a full subgraph $\Gamma_X$ of the graph $\Gamma$. Let $G_X$ be the subgroup of $G$ generated by the vertex groups corresponding to the vertices contained in $X$. Subgroups of $G$ that can be obtained in such way are called \emph{full subgroups} of $G$; according to standard convention, $G_{\emptyset} = \{e\}$. We say that $G_X$ is a \emph{proper full subgroup} if $G_X \neq G$. 

Using the normal form theorem it can be seen that $G_X$ is naturally isomorphic to the graph product of the family $\mathcal{G}_X = \{G_v \mid v \in X\}$ with respect to the full subgraph $\Gamma_X$. 
For these subgroups, there exists a canonical retraction $\rho_X \colon G \to G_X$ defined on the standard generators of $G$ as follows:
\begin{displaymath}
	\rho_X(g) = \begin{cases}
					g &\mbox{ if }g \in G_v \mbox{ for some } v \in X,\\
					1 &\mbox{ otherwise.} 
				\end{cases}
\end{displaymath}
We will often abuse the notation and sometimes consider the retraction $\rho_X$ as a surjective homomorphism $\rho_X \colon G \to G_X$, and sometimes as an endomorphism $\rho_X \colon G \to G$. In that case writing $\rho_{X} \circ \rho_{Y}$, where $Y \subseteq V\Gamma$, makes sense.

Let $A,B \subseteq V\Gamma$ be arbitrary, $G_A, G_B \leq G$ be the corresponding full subgroups of $G$, and let $\rho_A, \rho_B$ be the corresponding retractions. Then $\rho_A$ and $\rho_B$ commute: $\rho_A \circ \rho_B = \rho_B \circ \rho_A$. It follows that $G_A \cap G_B = G_{A \cap B}$ and $\rho_A \circ \rho_B = \rho_{A \cap B}$.

\subsection{Cyclically reduced elements}
 Let $g\in \Gamma\mathcal{G}$ be an element of a graph product, and let $W\equiv (g_1, \dots,g_n)$ be a reduced word representing it. We say that $g$ is \emph{cyclically reduced} if all cyclic permutations $(g_{i+1},\dots,g_n,g_1,\dots,g_i)$ of $W$, for $i=1,\dots, n-1$, are reduced words. 
In view of \cite[Lemma 3.8]{mf} this definition is well posed, because it is independent of the choice of the reduced word $W$ representing the element $g$. In fact, \cite[Lemma 3.8]{mf} guarantees that every element is conjugate to some cyclically reduced element.

We define the \emph{essential support} of an element $g\in G$, denoted by $\esupp(g)$, to be the support $\supp(g')$ of some cyclically reduced element $g'\in g^G$. Note that if $g', g'' \in g^G$ are cyclically reduced, then $\supp(g') = \supp(g'')$ by \cite[Lemma 3.12]{mf}. It follows that the essential support is well defined, that is it does not depend on the choice of the cyclically reduced word $g'\in g^G$.

The following lemma shows that cyclically reduced elements are the elements of minimal length in their conjugacy class. The proof combines arguments of the proofs of \cite[Lemma 3.12]{mf} and \cite[Theorem B]{bf}.
\begin{lemma}
    \label{lemma:cr_are_small}
   Let $x \in \Gamma\mathcal{G}$ be cyclically reduced. For every $y \in x^G$ we have that $|x| \leq |y|$ and $\supp(x) \subseteq \supp(y)$.
\end{lemma}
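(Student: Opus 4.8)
The plan is to reduce an arbitrary conjugate $y\in x^G$ to a cyclically reduced one and then to compare two cyclically reduced conjugates. The engine is the \emph{reduced conjugate form}: every $z\in\Gamma\mathcal{G}$ can be written as $z=cwc^{-1}$ with $w$ cyclically reduced and $|z|=2|c|+|w|$. One obtains it by repeatedly applying the Normal Form Theorem (Theorem \ref{nft}) to cancel a front syllable of $z$ against a matching back syllable until no cancellation across the two ends remains, exactly as in the proof of \cite[Lemma 3.8]{mf}. Because the product $c\cdot w\cdot c^{-1}$ is then reduced, there is no further cancellation, so $|z|\ge|w|$ and $\supp(z)=\supp(c)\cup\supp(w)\supseteq\supp(w)$.

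Applying this to $z=y$ gives $y=cwc^{-1}$ with $w$ cyclically reduced. Now $w$ and $x$ are both cyclically reduced and both lie in $x^G$, hence in a common conjugacy class. Granting for the moment that any two cyclically reduced elements of a common conjugacy class have the same length and the same support, I conclude $|w|=|x|$ and $\supp(w)=\supp(x)$; hence $|y|=2|c|+|w|\ge|x|$ and $\supp(y)\supseteq\supp(w)=\supp(x)$, which is precisely the statement of Lemma \ref{lemma:cr_are_small}.

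It remains to prove that cyclically reduced conjugates share length and support, and this is where the real work lies. Equality of supports is \cite[Lemma 3.12]{mf}, already recalled above. For equality of lengths I would analyse conjugation by a single syllable $h\in G_v\setminus\{e\}$: examining the word $(h,x_1,\dots,x_n,h^{-1})$ through $\FL(x)$ and $\LL(x)$, either $h$ commutes past all of $x$ and $hxh^{-1}=x$, or $v\in\FL(x)\cap\LL(x)$ and $hxh^{-1}$ stays cyclically reduced of the same length, or a dangling $G_v$-syllable is created and the length strictly grows. The delicate point — the main obstacle — is that cyclic reducedness forces the unique front-accessible and back-accessible $G_v$-syllables of $x$ to coincide when $v\in\FL(x)\cap\LL(x)$: otherwise some cyclic rotation of the reduced word would place those two $G_v$-syllables with only syllables from $\link(v)$ between them, making them joinable and contradicting that every cyclic permutation is reduced. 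Combining this single-syllable computation with the observation that cyclically reduced conjugates differ only by length-preserving syllable shufflings and cyclic permutations — the conjugacy criterion underlying \cite[Lemma 3.12]{mf} and \cite[Theorem B]{bf} — yields $|w|=|x|$ and finishes the argument.
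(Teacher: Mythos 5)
Your overall strategy is the same as the paper's: conjugate $y$ down to a cyclically reduced element $w$ by repeatedly cancelling an end syllable, observe that length does not increase and support does not grow, and then compare the two cyclically reduced conjugates $w$ and $x$ via \cite[Lemma 3.12]{mf}. The paper packages the first step as an induction on $|y|$ (conjugating by the last syllable strictly decreases the length and keeps the support contained) and quotes \cite[Lemma 3.12]{mf} for \emph{both} the length and the support equality in the cyclically reduced case, so your closing sketch of the length equality is not needed; moreover, as written it essentially assumes the conjugacy criterion (``cyclically reduced conjugates differ only by length-preserving shufflings and cyclic permutations'') that it is meant to establish.

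There is one genuinely false intermediate claim: in a graph product one cannot in general write $z=cwc^{-1}$ with $w$ cyclically reduced, the product $c\cdot w\cdot c^{-1}$ reduced, and $|z|=2|c|+|w|$. Take two non-adjacent vertices $u,v$ and $z=aba$ with $a\in G_u\setminus\{e\}$, $b\in G_v\setminus\{e\}$: the word $(a,b,a)$ is reduced, so $|z|=3$, yet $z$ is not cyclically reduced and every cyclically reduced conjugate (for instance $a^2b$) has length $2$, so no identity $3=2|c|+|w|$ is possible. The point is that the front and back syllables may lie in the same vertex group without being mutually inverse, so conjugation merges them into a single nontrivial syllable rather than cancelling a matched pair, and the length drops by $1$, not $2$. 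Only the inequalities $|z|\ge|w|$ and $\supp(w)\subseteq\supp(z)$ survive --- which is all your argument actually uses --- so the proof is repaired by replacing the claimed identity with the monotone shortening process, i.e.\ exactly the paper's induction.
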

\begin{proof}
    We suppose that $x$ is fixed and proceed by induction on $|y|$. The base case is trivial: if $|y| = 0$, then $x = e = y$ and the statement holds.
    
    Now suppose that the statement holds for all $y' \in x^G$ with $|y'| \leq n$, and let $y \in x^G$ be such that $|y| = n + 1$. If $y$ is cyclically reduced then $|x| = |y|$ and $\supp(x) = \supp(y)$ by \cite[Lemma 3.12]{mf}. Now, suppose that $y$ is not cyclically reduced. Suppose that $W = (y_1, \dots,y_n, y_{n+1})$ is a reduced expression for $y$. Without loss of generality, we may assume that the syllables $y_1$ and $y_{n+1}$ belong to the same vertex group. Consider
    \begin{displaymath}
        y' = y_{n+1} y y_{n+1}^{-1} = (y_{n+1} y_1) y_2 \dots y_n \in x^G,
    \end{displaymath}
so that $|y'| < |y|$ and $\supp(y') \subseteq \supp(y)$. Since $|y'| \leq n$, by the induction hypothesis we see that $|x| \leq |y'|$ and $\supp(x) \subseteq \supp(y')$. Thus, the proof is complete.
\end{proof}

As an immediate consequence of Lemma \ref{lemma:cr_are_small} we derive the following nice characterisation of cyclically reduced elements.
\begin{corollary}
    \label{corollary:cr_are_minimal}
    Let $g \in \Gamma\mathcal{G}$ be arbitrary. Then $g$ is cyclically reduced if and only if $|g| \leq |f|$ for all $f \in g^G$.
\end{corollary}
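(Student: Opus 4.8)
The plan is to treat the two implications of the biconditional separately, and the forward one is indeed immediate from the preceding lemma. Assuming $g$ is cyclically reduced, I would simply apply Lemma~\ref{lemma:cr_are_small} with $x = g$: since $g^G = x^G$, the lemma gives $|g| = |x| \leq |f|$ for every $f \in g^G$. Nothing further is required for this direction.

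For the converse I would argue by contrapositive, showing that an element which fails to be cyclically reduced cannot realise the minimal length in its conjugacy class. Suppose $g$ is not cyclically reduced and fix a reduced word $W \equiv (g_1, \dots, g_n)$ representing it, so that $n = |g|$. By the definition of cyclically reduced, some cyclic permutation $W_i \equiv (g_{i+1}, \dots, g_n, g_1, \dots, g_i)$ with $1 \leq i \leq n-1$ fails to be a reduced word. Setting $h = g_1 \cdots g_i$, a direct computation gives $h^{-1} g h = g_{i+1} \cdots g_n g_1 \cdots g_i$, so $W_i$ is precisely a word representing the conjugate $f := h^{-1} g h \in g^G$.

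The crux is then a length count: the word $W_i$ has exactly $n$ syllables, being a rearrangement of the syllables of $W$, yet it is not reduced. By the Normal Form Theorem (Theorem~\ref{nft}), reduced words are exactly the words of minimal length representing a given element, so the length $|f|$ — the minimal number of syllables over all words representing $f$ — is strictly smaller than $n$. Thus $f \in g^G$ satisfies $|f| < |g|$, and $g$ does not attain the minimal length in its conjugacy class. This proves the contrapositive and hence the converse.

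I do not expect a genuine obstacle here; the only step demanding care is the passage from ``$W_i$ is a non-reduced word of length $n$'' to ``$|f| < n$'', which is exactly the content of Theorem~\ref{nft}. I note that one could instead try to derive the converse from Lemma~\ref{lemma:cr_are_small} together with \cite[Lemma 3.8]{mf}, conjugating $g$ to a cyclically reduced $g'$ and applying the lemma in both directions to obtain $|g| = |g'|$; however, this equality of lengths alone does not force $g$ itself to be cyclically reduced, so the cyclic-permutation argument above is the cleaner and genuinely conclusive route.
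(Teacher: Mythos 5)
Your proof is correct and matches the paper's intent: the paper states the corollary as an immediate consequence of Lemma~\ref{lemma:cr_are_small}, whose statement gives your forward direction verbatim, and whose inductive step (conjugating a non-cyclically-reduced element to strictly shorten it) is exactly the cyclic-permutation argument you spell out for the converse. Your closing remark is also apt — the ``conjugate to a cyclically reduced $g'$ and compare lengths'' shortcut does not close the converse on its own without this same length-reduction step.
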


\section{Amalgams over retracts}
\label{section:amalgams}
Suppose that $G_A = K_A \rtimes R$, $G_A =K_A \rtimes R$ are semidirect products and let $\rho_A \colon G_A \to R$, $\rho_B \colon G_B \to R$ be the corresponding retractions. We say that $R$ is a \emph{common retract} for $G_A$ and $G_B$. Consider the amalgamated free product $G = G_A \ast_R G_B$,  where the amalgamation is taken along the identity map $\id_R\colon R\to R$. We say that $G$ is an \emph{amalgam over a retract}, and we have that
\begin{equation}\label{eq_normal_form}
G=    (K_A \rtimes R) \ast_R (K_B \rtimes R) \cong (K_A \ast K_B) \rtimes R.
\end{equation}
From equation \eqref{eq_normal_form}, we see that every $g \in G$ can be expressed as a product $g = k_1 \dots k_m r$ for some elements $k_1, \dots, k_m \in K_A \cup K_B$ and $r \in R$. We say that this expression is \emph{reduced} if $k_i \neq e$ for all $i = 1, \dots, m$ and $k_i, k_{i+1}$ do not belong to the same factor for $i = 1, \dots, m-1$. 
If $m = 0$ and $r = e$, we say that the expression is \emph{trivial}, otherwise it is \emph{non-trivial}.

Combining the standard Normal Form Theorem for amalgamated products (see \cite[Theorem 4.4]{mks}) together with equation \eqref{eq_normal_form} we obtain the following lemma.
\begin{lemma}[Normal form theorem for amalgams over retracts]
    \label{lemma:nft for amalgams over retracts}
    Let $G_A = K_A \rtimes R$, $G_B = K_B \rtimes R$ be groups with a common retract and let $G = G_A \ast_R G_B$ be the corresponding amalgam over the retract $R$. For every $g \in G$ the corresponding reduced expression is given uniquely, and a reduced expression represents the trivial element in $G$ if and only if it is trivial.  
\end{lemma}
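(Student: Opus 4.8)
The plan is to obtain both assertions from the isomorphism $G \cong (K_A \ast K_B) \rtimes R$ recorded in \eqref{eq_normal_form}, combined with the Normal Form Theorem for free products (the amalgamation over the trivial subgroup in \cite[Theorem 4.4]{mks}). The first thing I would record is that the semidirect decomposition $G_A = K_A \rtimes R$ says precisely that every element of $G_A$ is uniquely a product $kr$ with $k \in K_A$ and $r \in R$, so that $K_A$ is a set of coset representatives for $R$ in $G_A$; likewise $K_B$ is a set of representatives for $R$ in $G_B$. These are exactly the transversals against which one applies the classical Normal Form Theorem for the amalgam $G_A \ast_R G_B$ of \cite[Theorem 4.4]{mks}, and they are what make \eqref{eq_normal_form} the right bookkeeping device.

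Next I would transport the problem through the isomorphism $\Phi \colon G \to (K_A \ast K_B) \rtimes R$ of \eqref{eq_normal_form}, under which each $k \in K_A \cup K_B$ corresponds to $(k, e)$ and each $r \in R$ to $(e, r)$. A short computation, using only that the identity of $R$ acts trivially in the first coordinate, shows that a reduced expression $g = k_1 \cdots k_m r$ is carried to the pair $(k_1 \cdots k_m,\, r)$, where now the product $k_1 \cdots k_m$ is taken in the free product $K_A \ast K_B$. The key observation is that the reducedness conditions in the paper's sense — namely $k_i \neq e$ for every $i$, and $k_i, k_{i+1}$ lying in different factors — are exactly the conditions for the word $k_1 \cdots k_m$ to be in free-product normal form in $K_A \ast K_B$.

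With this dictionary in place the two conclusions follow formally. Uniqueness of the pair $(w, r) \in (K_A \ast K_B) \times R$ representing $\Phi(g)$ is immediate from the semidirect-product structure — the $R$-coordinate is read off by the retraction onto $R$, after which $w$ is determined — and uniqueness of the free-product normal form of $w$ then pins down the syllables $k_1, \dots, k_m$; together these give uniqueness of the reduced expression. For the triviality statement, $g = e$ holds if and only if $\Phi(g) = (e, e)$, that is $w = e$ and $r = e$; by the free-product Normal Form Theorem $w = e$ forces the empty word $m = 0$, so the reduced expression is trivial, and conversely. I do not expect a genuine obstacle here: all the real content is supplied by \eqref{eq_normal_form} and the free-product normal form, and the only care needed is the bookkeeping reconciling the paper's right-normalised convention $k_1 \cdots k_m r$ (with $R$ collected on the right and $K_A, K_B$ taken as transversals) with the abstract statement of \cite[Theorem 4.4]{mks}.
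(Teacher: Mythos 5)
Your proposal is correct and follows essentially the same route as the paper, which derives the lemma in one line by combining the standard Normal Form Theorem for amalgamated products from \cite[Theorem 4.4]{mks} with the isomorphism $G \cong (K_A \ast K_B)\rtimes R$ of equation \eqref{eq_normal_form}. Your write-up simply makes explicit the bookkeeping (transversals, the dictionary under $\Phi$, and the reduction to the free-product normal form in $K_A \ast K_B$) that the paper leaves implicit.
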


Let $g \in G$ be arbitrary. With $ \lvert g\rvert_*$ we denote the free-product length of the factor of $g$ belonging to $K_1 \ast K_2$, that is, if $g=k_1\dots k_nr$ is a reduced expression for the element $g$, then $\lvert g\rvert_*=n$. 
This is the word length in $K_1\ast K_2$ with respect to the (potentially infinite) generating set $K_1\cup K_2$.

Note that if $|g|_* > 0$ then $g \neq e$, but the opposite implication does not hold: $|r|_* = 0$ for every $r \in R$.

\subsection{Graph products as amalgams over retracts}
Let $\Gamma$ be a graph and $C\subseteq V\Gamma$. We say that the subset $C$ is \emph{separating} if the induced subgraph $\Gamma_{V\Gamma\setminus C}$ has at least two connected components.
Therefore, the graph $\Gamma$ contains a separating set if and only if $\Gamma$ contains a pair of distinct vertices $u,v \in V\Gamma$ such that $\{u,v\} \not \in E\Gamma$, i.e. if $\Gamma$ is not complete. Note that, by definition, if the graph $\Gamma$ is disconnected then the empty set $\emptyset \subseteq V\Gamma$ is a separating set in $\Gamma$. In fact, $\emptyset$ is separating if and only if $\Gamma$ is disconnected.
\begin{lemma}
    \label{lemma:split}
    Let $\Gamma$ be a graph and let $\mathcal{G} = \{G_v \mid v \in V\Gamma\}$ be a family of non-trivial groups. If $\Gamma$ contains a separating set, then $G = \Gamma \mathcal{G}$ properly splits as an amalgam over a retract, where the factors (and the amalgamated subgroup) are full subgroups.
\end{lemma}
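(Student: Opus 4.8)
The plan is to convert the combinatorial separation of $\Gamma$ into an algebraic splitting using full subgroups. First I would fix a separating set $C \subseteq V\Gamma$, so that the induced subgraph $\Gamma_{V\Gamma \setminus C}$ has connected components $\Gamma_1, \dots, \Gamma_k$ with $k \geq 2$. Grouping these components, I set $X = V\Gamma_1$ and $Y = V\Gamma_2 \cup \dots \cup V\Gamma_k$, so that $X$ and $Y$ are both non-empty, $V\Gamma \setminus C = X \sqcup Y$, and --- crucially --- there is no edge of $\Gamma$ joining a vertex of $X$ to a vertex of $Y$: both endpoints of such an edge would lie in $V\Gamma \setminus C$, so the edge would belong to $\Gamma_{V\Gamma\setminus C}$ and cross between distinct connected components, which is impossible. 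I then put $A = X \cup C$ and $B = Y \cup C$, so that $A \cup B = V\Gamma$ and $A \cap B = C$.

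Next I would exhibit $G_C$ as a common retract of $G_A$ and $G_B$. Restricting the canonical retraction $\rho_C \colon G \to G_C$ to the full subgroup $G_A$ yields a retraction of $G_A$ onto $G_C$, since it fixes $G_C$ pointwise and sends the generators indexed by $X$ to $e$; hence $G_A = K_A \rtimes G_C$ with $K_A = \ker(\rho_C|_{G_A})$, and symmetrically $G_B = K_B \rtimes G_C$. Thus $G_C$ is a common retract in the sense of Section \ref{section:amalgams}, and one may form the amalgam $G_A \ast_{G_C} G_B$ over the retract $G_C$.

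The heart of the argument, and the step I expect to be the main obstacle, is the identification $G \cong G_A \ast_{G_C} G_B$. The cleanest route is to compare presentations. The graph product $G$ is generated by $\bigcup_{v} G_v$ subject to the internal relations of each $G_v$ together with the commuting relations $[G_u, G_v] = 1$ for $\{u,v\} \in E\Gamma$. On the other side, $G_A \ast_{G_C} G_B$ has a presentation obtained by amalgamating the graph-product presentations of $G_A = \Gamma_A \mathcal{G}_A$ and $G_B = \Gamma_B \mathcal{G}_B$ along the common subpresentation of $G_C$; its generators are again $\bigcup_v G_v$, and its defining relations are the internal relations of the $G_v$ together with the commuting relations coming from the edges of $\Gamma_A$ and of $\Gamma_B$. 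Since every edge of $\Gamma$ has both endpoints in $A$ or both in $B$ (no edge crosses between $X$ and $Y$), the edge sets of $\Gamma_A$ and $\Gamma_B$ together exhaust $E\Gamma$, so the two presentations coincide and the natural map $G_A \ast_{G_C} G_B \to G$ is an isomorphism. Alternatively, the universal property supplies a surjection $G_A \ast_{G_C} G_B \to G$, and injectivity can be verified against the Normal Form Theorem for graph products (Theorem \ref{nft}) by reading off a reduced amalgam expression from a reduced word.

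Finally I would check that the splitting is proper. Since $Y \neq \emptyset$ we have $A \neq V\Gamma$, and as the vertex groups are non-trivial the full subgroup $G_X$ is non-trivial with $G_X \cap G_C = G_{X \cap C} = G_\emptyset = \{e\}$, whence $G_C \subsetneq G_A \subsetneq G$; symmetrically $G_C \subsetneq G_B \subsetneq G$. Therefore $G$ splits properly as an amalgam over the retract $G_C$ with full-subgroup factors $G_A$ and $G_B$, as required. When $C = \emptyset$, i.e.\ when $\Gamma$ is disconnected, the very same argument gives $G = G_A \ast G_B$, a free product, which is precisely the amalgam over the trivial retract $G_\emptyset = \{e\}$.
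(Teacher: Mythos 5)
Your proposal is correct and follows essentially the same route as the paper: choose one connected component of $\Gamma_{V\Gamma\setminus C}$ as $A'$, lump the rest into $B'$, set $A = A'\cup C$, $B = B'\cup C$, and observe that $G_C$ is a common retract of the full subgroups $G_A$ and $G_B$ with $G \cong G_A \ast_{G_C} G_B$. The paper simply asserts this isomorphism and the retract structure, whereas you supply the standard presentation-comparison justification and the properness check; these are correct elaborations rather than a different argument.
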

\begin{proof}
Suppose that $C \subset V\Gamma$ is a (potentially empty) separating set of vertices, so that the graph $\Gamma \setminus C$ has at least two connected components. Write $V\Gamma \setminus S=A'\sqcup B'$, where $A'$ is one of these connected components, and $B'$ consists of all remaining vertices.
It follows that $G\cong G = G_A \ast_{G_C} G_B$, where $A=A'\cup C$ and $B=B'\cup C$.

The subgroups $G_A$, $G_B$ and $G_C$ are retracts of $G$ because they all are full subgroups. For the same reason, $G_C$ is a retract of both $G_A$ and $G_B$.
\end{proof}
Note that the choice of the splitting in Lemma \ref{lemma:split} is not unique, and depends both on the choice of the separating subset $C$, and on how to express the set $V\Gamma\setminus C$ as the union of two sets given by its connected components.

For a vertex $v \in V\Gamma$ we define the \emph{link} $\link(v)$ to be the set of vertices adjacent to $v$ in$V\Gamma$, and the \emph{star} to be
$\star(v)=\link(v)\cup\{v\}$. For a subset $A\subseteq V\Gamma$, we define $\star(A)$ to be $\bigcap_{v\in A}\star(v)$.

Separating subsets can be obtained using links of vertices: if $v\in V\Gamma$ is a vertex that is not connected by an edge to every other vertex of $\Gamma$,  i.e. if $\link(v) \neq V\Gamma \setminus \{v\}$, then $\link(v)$ is a separating subset, and the induced splitting is
\[G\cong G_{V\Gamma\setminus\{v\}}\ast_{G_{\link(v)}}\bigl(G_{\link(v)}\times G_v\bigr).\]
We did not use this fact in the proof of Lemma \ref{lemma:split}, because it might happen that the minimal separating subset in a graph cannot be expressed as the link of a vertex, as for instance in the following graph:
\begin{equation*}
\begin{tikzpicture}
\draw (0,0) --(1,.5) --(1,-.5) --(0,0) --(-1,.5) --(-1,-.5) --(0,0);
\draw[fill] (0,0) circle [radius=0.035];
\draw[fill] (1,.5) circle [radius=0.035];
\draw[fill] (1,-.5) circle [radius=0.035];
\draw[fill] (-1,.5) circle [radius=0.035];
\draw[fill] (-1,-.5) circle [radius=0.035];
\end{tikzpicture}
\end{equation*}

From Lemma \ref{lemma:split} one deduces Theorem \ref{general:GP_CSS}, as a graph product is either a direct product of vertex groups (when the underlying graph is complete), or it splits as an amalgam over a common retract.
\begin{thmBB}\emph{
Let $\mathcal{P}$ be a property of groups that is preserved under finite direct products and under amalgamation over retracts. Then $\mathcal P$ is preserved under finite graph products.}
\end{thmBB}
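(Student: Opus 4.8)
The plan is to induct on the number of vertices $|V\Gamma|$, which is finite since we are dealing with finite graph products. As a preliminary reduction, note that a graph product with a trivial vertex group is canonically isomorphic to the full subgroup spanned by the non-trivial vertices (the trivial vertex contributes nothing to any reduced word), so I may assume from the outset that every $G_v \in \mathcal{G}$ is non-trivial; this is exactly the hypothesis under which Lemma \ref{lemma:split} is stated. The base case $|V\Gamma| \leq 1$ is immediate: the empty graph product is the trivial group (which has $\mathcal{P}$, being the direct product of the empty family), and a one-vertex graph product is a single vertex group, which has $\mathcal{P}$ by assumption.

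For the inductive step, assume the claim for all graph products on fewer than $n \geq 2$ vertices, let $G = \Gamma\mathcal{G}$ with $|V\Gamma| = n$, and distinguish two cases according to whether $\Gamma$ is complete. If $\Gamma$ is complete, then $G$ is the direct product of its finitely many vertex groups; since each $G_v$ has $\mathcal{P}$ and $\mathcal{P}$ is preserved under finite direct products, $G$ has $\mathcal{P}$. If $\Gamma$ is not complete, then $\Gamma$ contains a separating set $C$, and Lemma \ref{lemma:split} yields a proper splitting $G \cong G_A \ast_{G_C} G_B$ as an amalgam over the common retract $G_C$, with $A = A' \cup C$ and $B = B' \cup C$ full. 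Since $A'$ and $B'$ are non-empty (they are distinct connected components of $\Gamma \setminus C$), both $A$ and $B$ are proper subsets of $V\Gamma$, so each of $G_A, G_B$ is naturally isomorphic to the graph product over its induced subgraph and is therefore a graph product on strictly fewer than $n$ vertices drawn from $\mathcal{G}$. By the induction hypothesis $G_A$ and $G_B$ have $\mathcal{P}$ (as does $G_C$, on still fewer vertices), and since $\mathcal{P}$ is preserved under amalgamation over retracts, the amalgam $G_A \ast_{G_C} G_B \cong G$ has $\mathcal{P}$, closing the induction.

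The routine checks are that the factors really are smaller graph products, which is precisely the ``$G_X$ is naturally isomorphic to the graph product on $\Gamma_X$'' fact recalled in Section \ref{section:graph_products}, and that the decomposition furnished by Lemma \ref{lemma:split} genuinely has the shape of an amalgam over a retract in the sense of Section \ref{section:amalgams}, i.e.\ that $G_A = \ker(\rho_C|_{G_A}) \rtimes G_C$ and likewise for $G_B$, so that the amalgamation hypothesis applies verbatim. The step I would be most careful about, and the only real obstacle, is the bookkeeping that guarantees the splitting is \emph{strictly} length-reducing in both factors: one must verify that $C$ being separating forces each of $A$ and $B$ to omit at least one vertex, so that the induction parameter strictly decreases on both sides and the argument does not stall on a trivial splitting.
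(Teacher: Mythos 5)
Your proof is correct and follows essentially the same route as the paper's: induction on $|V\Gamma|$, with the complete case handled by closure under finite direct products and the non-complete case by the splitting of Lemma \ref{lemma:split} as an amalgam of proper full subgroups over a common retract. The extra checks you flag (non-trivial vertex groups, properness of $A$ and $B$, the semidirect structure $G_A = \ker(\rho_C|_{G_A}) \rtimes G_C$) are all routine and correctly resolved; the paper leaves them implicit.
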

\begin{proof}
    Let $\Gamma=(V\Gamma,E\Gamma)$ be a finite graph, and $\mathcal{G}$ be a family of groups that have $\mathcal{P}$, indexed by the vertex set of $\Gamma$.
    If $\Gamma$ is a complete graph, then $\Gamma\mathcal{G}$ is a finite direct product of groups with property $\mathcal{P}$, and therefore it satisfies property $\mathcal P$.
    
    If $\Gamma$ is not complete, then it contains a separating set $S \subset V\Gamma$.
    Therefore, by Lemma \ref{lemma:split}, $\Gamma\mathcal{G}$ is an amalgam over a common retract, $\Gamma\mathcal{G}= G_1 \ast_{G_S} G_2$, for some proper full subgroups $G_1, G_2, G_S \leq \Gamma\mathcal{G}$.
    By induction hypothesis the groups $G_1, G_2$ satisfy property $\mathcal{P}$. Therefore, $\Gamma\mathcal{G}$ satisfies $\mathcal{P}$.
\end{proof}

\subsection{Cyclically reduced elements}
Let $g \in (K_1\ast K_2)\rtimes R$ and let $g = k_1 \dots k_m r$ 
 be a reduced expression for $g$. With a slight abuse of notation, we say that $g$ is \emph{cyclically reduced} if either one of the following is true:
\begin{itemize}
    \item [(i)] $m \in \{0, 1\}$,
    \item [(ii)] $m \geq 2$ and $k_1, k_m$ do not belong to the same factor.
\end{itemize}
That is, we say that $g$ is cyclically reduced if $gr^{-1}\in K_1\ast K_2$ is cyclically reduced in the sense of the free product $K_1 \ast K_2$ (see \cite[Section 4.1]{mks}).
Note that if $|g|_*$ is even then $g$ is necessarily cyclically reduced. If $|g|_* = 2l + 1$ for some $l \in \mathbb{N}$, then $g$ is cyclically reduced if and only 
if $|g|_* = 1$.

If $g = k_1 \dots k_m r$ is a reduced expression for an element $g \in (K_1\ast K_2)\rtimes R$, we say that $c \in K_1 \ast K_2$ is a \emph{prefix} of $g$ if $c = k_1 \dots k_{m'}$ for $m' \leq m$.
\begin{lemma}\label{lemma:amalgam_conjugator_length}
    Let $g \in (K_1\ast K_2)\rtimes R$ be not cyclically reduced with $\lvert g\rvert_* = 2l + 1$, where $l \geq 1$. There exists a prefix $c$ of $g$ such that $c^{-1} g c$ is cyclically reduced and $\lvert c\rvert_* \leq l$. 
\end{lemma}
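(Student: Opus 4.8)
The plan is to reduce the statement to the classical cyclic‑reduction procedure for free products, carried out one syllable at a time from the outside inward, while carefully tracking the $R$‑part. I would begin by fixing a reduced expression $g = k_1 \cdots k_m r$ with $m = 2l+1$. Since $g$ is not cyclically reduced and $m \geq 2$, the outer syllables $k_1$ and $k_m$ lie in the same free factor; moreover, because consecutive syllables of a reduced word alternate between $K_1$ and $K_2$, the syllables $k_{j+1}$ and $k_{m-j}$ share a common factor for every $j$. Throughout I would use repeatedly that $R$ normalises each of $K_1$ and $K_2$ (as $K_i \trianglelefteq K_i \rtimes R$), so that for $r \in R$ the conjugate $k^r = r k r^{-1}$ lies in the same factor as $k$.

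I would argue by induction on $l$. For the base case $l = 1$ we have $g = k_1 k_2 k_3 r$ with $k_1, k_3$ in the same factor. Conjugating by the prefix $c = k_1$ gives $c^{-1} g c = k_2 k_3 k_1^{r} r$, and since $k_3$ and $k_1^{r}$ lie in the same factor they merge into the single syllable $k_3 k_1^{r}$: if this product is non‑trivial, the expression $k_2 (k_3 k_1^{r}) r$ is cyclically reduced, and if it is trivial we are left with $k_2 r$, again cyclically reduced. In both cases $|c|_* = 1 = l$.

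For the inductive step I would assume the statement for $l-1$ and take $g$ with $|g|_* = 2l+1$ and $l \geq 2$. Conjugating by $c_0 = k_1$ yields $c_0^{-1} g c_0 = k_2 \cdots k_m k_1^{r} r$, where once more $k_m$ and $k_1^{r}$ merge. If $k_m k_1^{r} \neq e$, the expression is already cyclically reduced and $|c_0|_* = 1 \leq l$, so we are done. Otherwise $k_m k_1^{r} = e$ and we obtain $g_1 := c_0^{-1} g c_0 = k_2 \cdots k_{m-1} r$ with $|g_1|_* = 2(l-1)+1$; its first and last syllables $k_2, k_{m-1}$ lie in the same factor and $|g_1|_* \geq 3$, so $g_1$ is not cyclically reduced. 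Applying the induction hypothesis gives a prefix $c'$ of $g_1$ with $c'^{-1} g_1 c'$ cyclically reduced and $|c'|_* \leq l-1$. Setting $c = k_1 c'$, this is a prefix of $g$ (its syllables are $k_1$ followed by an initial segment of $k_2, \dots, k_{m-1}$), we have $c^{-1} g c = c'^{-1} g_1 c'$ cyclically reduced, and by subadditivity of the free‑product length $|c|_* \leq 1 + |c'|_* \leq l$.

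The only genuine obstacle is the bookkeeping forced by $R$: one must check that the merged wrap‑around syllable $k_m k_1^{r}$ really lies in a single factor, so that it either cancels or produces a legitimate reduced syllable, and this is precisely where the normality of $K_1$ and $K_2$ in their respective factor groups is invoked. Once that is settled, the length bound $|c|_* \leq l$ falls out of the induction automatically, since each conjugation step contributes exactly one syllable to $c$ while removing two from the free‑product length, so the recursion depth cannot exceed $l$.
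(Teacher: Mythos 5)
Your proof is correct and is essentially the same argument as the paper's: the paper locates the smallest index $l'$ for which the wrap-around product $k_{2l+2-l'}k_{l'}^{r}$ is non-trivial and conjugates by the prefix $k_1\cdots k_{l'}$ in a single step, while you unroll exactly this cancellation process into an induction that peels off one syllable at a time. The length bound $\lvert c\rvert_*\leq l$ arises in both versions from the same counting of cancelling pairs.
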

\begin{proof}
    Let $g = k_1 \dots k_{2l+1}r$ 
    be the reduced expression for $g$. By assumption, $k_1$ and $k_{2l+1}$ belong to the same factor, and moreover $k_i$ and $k_{2l+2-i}$ belong to the same factor, for all $i =1,\dots, l$.

    Suppose that that there exists $0< l' < l$ such that $(k_{2l+2-l'}k_{l'}^r) \neq 1$,
    where $k_{l'}^r=rk_{l'}r^{-1}$ by definition.
    Let $l'$ be the smallest natural possible with this property, and set $c = k_1 \dots k_{l'}$. Then 
    \begin{equation}\label{reduced_for_Cgc}
    \begin{split}
        c^{-1}gc &=(k_1\dots k_{l'})^{-1}(k_1\dots k_{2l+1}r)  (k_1\dots k_{l'}) \\
        &= k_{l'+1} \dots k_{2l + 1 - l' } (k_{2l + 2 - l'}k_{l'}^r) r.
    \end{split}\end{equation}
    As the element $(k_{2l+2-l'}k_{l'}^r)$ is not trivial, the expression of equation \eqref{reduced_for_Cgc} is reduced. 
    Moreover, the elements $k_{l'+1}$ and $k_{l'}$ (and therefore $k_{l'+1}$ and $(k_{2l-l'}k_{l'}^r)$) belong to different factors. We therefore see that the element $c^{-1}gc$ is cyclically reduced.
    
    If no such $l'$ exists, set $c = k_1 \dots k_l$. We have that
    \begin{equation*}
    c^{-1} g c = (k_1\dots k_l)^{-1}(k_1\dots k_{2l+1}r)  (k_1\dots k_l)  = k_{l+1} r   
    \end{equation*}
    is cyclically reduced.
\end{proof}

\begin{lemma}
    \label{lemma:amalgam_CR_powers}
    For $i=1,2$ let $G_i = K_i \rtimes R$ be torsion-free groups, consider the amalgam
    $G = G_1 \ast_R G_2 \cong (K_1 \ast K_2)\rtimes R$, and
    suppose that for every $g \in G_i \setminus R$ we have $\langle g \rangle \cap R = \{e\}$. 
    Let $g \in G$ and $n \in \mathbb{Z}\setminus\{0\}$ be arbitrary. Then $g^n$ is cyclically reduced if and only if $g$ is cyclically reduced. 
    
    Furthermore, in this case, we have that $\lvert g^n\rvert_* = 1$ if and only if $\lvert g\rvert_* = 1$, and $\lvert g^n\rvert_* = \lvert n\rvert \cdot\lvert g\rvert_*$ otherwise.
\end{lemma}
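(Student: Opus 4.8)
The plan is to eliminate the semidirect factor $R$ from the bookkeeping by recording only how the $K_1\ast K_2$-part of a power behaves under the conjugation action of $R$. Write $g=wr$ with $w=k_1\cdots k_m\in K_1\ast K_2$ reduced and $r\in R$, and let $\phi\colon K_1\ast K_2\to K_1\ast K_2$ denote conjugation by $r$, that is $\phi(x)=x^r=rxr^{-1}$. Since $G_i=K_i\rtimes R$, the map $\phi$ is an automorphism fixing each free factor $K_i$ setwise, so it preserves both the free-product length and the factor of every syllable. A direct computation gives $g^n=\bigl(w\,\phi(w)\cdots\phi^{n-1}(w)\bigr)r^n$, hence $\lvert g^n\rvert_*$ equals the reduced free-product length of $W_n:=w\,\phi(w)\cdots\phi^{n-1}(w)$, and, by the definition of cyclic reduction in the amalgam (an element being cyclically reduced precisely when its $K_1\ast K_2$-part is), the element $g^n$ is cyclically reduced if and only if $W_n$ is cyclically reduced in $K_1\ast K_2$. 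Because $x$ and $x^{-1}$ have the same $*$-length and the same cyclic-reduction status, I may assume $n\geq 1$. Finally, as there are only two free factors, reduced words alternate between $K_1$ and $K_2$, so the factor of a syllable is determined by the parity of its position; in particular $g$ is cyclically reduced exactly when $m\leq 1$ or $m$ is even.

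First I would treat the case where $g$ is cyclically reduced, which simultaneously yields the forward implication and the length formula. If $m=0$ then $g\in R$ and $g^n\in R$, so $\lvert g^n\rvert_*=0$. If $m$ is even, then $c(1)\neq c(m)$ (writing $c(i)$ for the factor of $k_i$), so at every junction of $W_n$ the trailing syllable $\phi^{j}(k_m)$ and the leading syllable $\phi^{j+1}(k_1)$ lie in different factors; no combination occurs, $W_n$ is already reduced of length $nm$, and its first and last syllables lie in the distinct factors $c(1),c(m)$. Thus $g^n$ is cyclically reduced with $\lvert g^n\rvert_*=\lvert n\rvert\cdot\lvert g\rvert_*$. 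The delicate case is $m=1$: here $g=k_1r\in G_i\setminus R$ and $W_n=k_1\,\phi(k_1)\cdots\phi^{n-1}(k_1)$ is a product of syllables all lying in the single factor $K_i$, hence collapses to one element $\kappa\in K_i$. This is exactly where the hypotheses enter: if $\kappa=e$ then $g^n=r^n\in\langle g\rangle\cap R=\{e\}$, forcing $g^n=e$ and, by torsion-freeness, $n=0$, a contradiction; therefore $\kappa\neq e$ and $\lvert g^n\rvert_*=1$. This establishes that $g$ cyclically reduced implies $g^n$ cyclically reduced, together with the asserted length formula.

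For the converse I would prove the contrapositive: if $g$ is not cyclically reduced then neither is $g^n$. By the above, $m=\lvert g\rvert_*=2l+1$ is odd with $l\geq 1$, so Lemma \ref{lemma:amalgam_conjugator_length} supplies a prefix $c=k_1\cdots k_{l'}$ with $\lvert c\rvert_*=l'\leq l$ such that $h:=c^{-1}gc$ is cyclically reduced. Conjugating back and pushing $r^n$ to the right gives $g^n=ch^nc^{-1}=c\,V_n\,\phi^n(c)^{-1}r^n$, where $V_n$ is the $K_1\ast K_2$-part of $h^n$. Since $h$ is cyclically reduced, the already-proven implication applies to $h$, so $h^n$ is cyclically reduced and $V_n$ is reduced with controlled endpoints. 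One then checks that the product $c\,V_n\,\phi^n(c)^{-1}$ admits at most a single partial cancellation, occurring at the $V_n$--$\phi^n(c)^{-1}$ junction, while the outer syllable $k_1$ of $c$ and the outer syllable $\phi^n(k_1)^{-1}$ of $\phi^n(c)^{-1}$ both survive; as these lie in the same factor $c(1)$ and the reduced length stays odd and $\geq 3$, the element $g^n$ is not cyclically reduced. When no such $l'$ exists, Lemma \ref{lemma:amalgam_conjugator_length} gives $h=k_{l+1}r$ with $\lvert h\rvert_*=1$, and the single-syllable case above shows $h^n$ has $*$-length $1$; the same endpoint analysis then applies with no cancellation at all.

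The main obstacle is controlling the cancellations at the block junctions of $W_n$: when two cross-junction syllables multiply to the identity, a cascade of further cancellations (the free-product \emph{onion peeling}) can in principle shorten $W_n$ unpredictably and even threaten to change its cyclic-reduction type. The device that defuses this is Lemma \ref{lemma:amalgam_conjugator_length}, which performs the peeling once and for all at the level of $g$ itself: after conjugating to the cyclically reduced $h$, the base word no longer cancels against its $\phi$-shifts, so computing $g^n=ch^nc^{-1}$ reduces to a single, explicit partial cancellation against the short prefix $c$. The secondary subtlety, ruling out the degenerate collapse of a single-syllable power into $R$, is precisely what the torsion-freeness and the condition $\langle g\rangle\cap R=\{e\}$ are there to prevent.
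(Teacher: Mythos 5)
Your proof is correct and follows essentially the same route as the paper's: the even/short cases are handled by the explicit expansion $g^n=\bigl(w\,w^r\cdots w^{r^{n-1}}\bigr)r^n$ (with torsion-freeness and $\langle g\rangle\cap R=\{e\}$ ruling out the collapse $\kappa=e$ when $\lvert g\rvert_*=1$), and the odd case is reduced via Lemma \ref{lemma:amalgam_conjugator_length} to analysing $g^n=c\,h^n\,c^{-1}$, exactly as in the paper. The only substantive content hiding in your ``one then checks'' is the paper's explicit junction computation (the single combination $k_{2l+2-l'}^{r^{n-1}}k_{l'}^{r^n}\cdot(k_{l'}^{-1})^{r^n}=k_{2l+2-l'}^{r^{n-1}}\neq e$, which prevents any cancellation cascade), and that check goes through as you describe.
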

\begin{proof}
    Let $g = k_1 \dots k_m r$ be the reduced expression for $g$ and suppose that $g$ is cyclically reduced. There are two cases to consider: either $m \in \{0,1\}$, or $m = 2l$ for some $l \geq 1$.
    
    If $m \in \{0,1\}$, then $g$ belongs to one of the factors $G_i$, and therefore $g^n$ is cyclically reduced as well.
    
    Suppose that $m = 2l$ for some $l \geq 1$. We have that
    \begin{equation}\label{eq_cocky_bit}
        g^n = k_1 \dots k_{2l} k_1^r \dots k_{2l}^r \dots k_1^{r^{n-1}} \dots k_{2l}^{r^{n-1}} r^n
    \end{equation}
    is the reduced expression for $g^n$, and therefore $g^n$ is cyclically reduced.
    
    \smallskip
    Now assume that $g$ is not cyclically reduced, so that $m=2l+1$ for some $l\geq 1$. Following Lemma \ref{lemma:amalgam_conjugator_length}, $g$ has a prefix $c \in K_1 \ast K_2$ such that $c^{-1} g c$ is cyclically reduced and $|c| \leq l$. There are two subcases to distinguish: $l'= l$ or $l' < l$, where $l'$ is as in Lemma \ref{lemma:amalgam_conjugator_length}.
    
    If $l' = l$ then $c^{-1} g c = k_{l+1} r$ belongs to one of the factors: without loss of generality let us assume that $k_{l+1} \in K_1$ and consequently $c^{-1}gc \in G_1$.

    Denote $k = k_{l+1} k_{l+1}^r \dots k_{l+1}^{r^{n-1}} \in K_1$. Then we have
    \begin{align*}
        g^n &= cc^{-1} g^n cc^{-1} = c(c^{-1} g c)^nc^{-1}\\
            &= (k_1 \dots k_l)(k_{l+1}r)^n(k_1 \dots k_l)^{-1}\\
            &= (k_1 \dots k_l)(k r^n)(k_1 \dots k_l)^{-1}\\
            &= k_1 \dots k_l k (k_l^{-1})^{r^n} \dots (k_1^{-1})^{r^n}r^n.
    \end{align*}
    If $k = e$ is trivial then $(c^{-1}gc)^n = r^n \in R$. As $c^{-1}gc$ has infinite order, because the groups $G_i$ are torsion-free, it follows that $\langle c^{-1}gc \rangle \cap R \neq \{e\}$, which is a contradiction with the assumptions, as $c^{-1}gc \notin R$. By construction, as the element $g = k_1 \dots k_m r$ is reduced, the elements $k_l$ and $k$ belong to different factors, and similarly for $(k_l^{-1})^{r^n}$ and $k$. It follows that $g^n = k_1 \dots k_l \overline{k} (k_l^{-1})^{r^n}\dots (k_1^{-1})^{r^n} r^n$ is the reduced expression for $g^n$ and therefore $g^n$ is not cyclically reduced, as $k_1$ and $(k_1^{-1})^{r^n}$ belong to the same factor.
    
    \smallskip
    If $l'<l$, then the expression
    \[c^{-1}gc = k_{l'+1} \dots k_{2l +1 - l' } (k_{2l +2 - l'}k_{l'}^r) r\]
    is reduced, as can be seen in equation \eqref{reduced_for_Cgc}. 
    Let $w:= k_{l'+1} \dots k_{2l - l' - 1} (k_{2l - l'}k_{l'}^r)$ be an element of  $K_1 \ast K_2$. It follows that
    \begin{equation}\label{eq_case_lsmallerthanlprime}
        g^n = c(c^{-1}gc)^nc^{-1} = c(k r)^n c^{-1} = c w w^r \dots w^{r^{n-1}} (c^{-1})^{r^n}r^n.
    \end{equation}
    Note that the last letter of $c$ and the fist letter of $w$ belong to different factors, and the same is true for the first and last letter of $w$.
    
    It follows that, up to replacing all occurrences of $w$ with its expansion in terms of the elements $k_i$, the expression for $g^n$ given in equation \eqref{eq_case_lsmallerthanlprime} is reduced. The last letter of $w^{r^{n-1}}$ is $k_{2l + 2 - l'}^{r^{n-1}}k_{l'}^{r^n}$ and the first letter of $(c^{-1})^{r^n}$ is $(k_l^{-1})^{r^n}$. Multiplying those two we get $k_{2l - l'}^{r^{n-1}}$. It then follows that $g^n$ is not cyclically reduced.
    
    The last part of the statement follows from the reduced expression of equation \eqref{eq_cocky_bit}.
\end{proof}

We spell out the following fact, which was just proved in Lemma \ref{lemma:amalgam_CR_powers}:
\begin{corollary}\label{cor:spelt_out}
Let the groups $G_i$ and $G$ be as in Lemma \ref{lemma:amalgam_CR_powers}, and $g=k_1\dots k_mr$ be the reduced expression for the cyclically reduced element $g\in G$, with $m=\lvert g\rvert_*>1$. Then 
\[g^n=k_1\dots k_m k_1^r\dots k_m^r\dots k_1^{r^{n-1}}\dots k_m^{r^{n-1}}r^n\]
is the reduced expression of the element $g^n$, for all $n\geqslant 1$.
\end{corollary}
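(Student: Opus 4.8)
The plan is to observe that the statement is merely a transcription of the computation already carried out inside the proof of Lemma~\ref{lemma:amalgam_CR_powers}, so the work consists in recording the correct reduced form rather than running a new argument. First I would note that, since $g$ is cyclically reduced with $\lvert g\rvert_* = m > 1$, the parity dichotomy recorded before Lemma~\ref{lemma:amalgam_conjugator_length} forces $m$ to be even: an element of odd free-product length is cyclically reduced only when that length equals $1$. Writing $m = 2l$ with $l \geq 1$ places us in exactly the second case treated at the start of the proof of Lemma~\ref{lemma:amalgam_CR_powers}.

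Next I would expand $g^n = (k_1 \dots k_m r)^n$ directly. Using the convention $k_i^r = r k_i r^{-1}$ together with the fact that $R$ normalises each factor $K_i$ (which is precisely what $G_i = K_i \rtimes R$ encodes), one migrates each copy of $r$ to the far right, conjugating the syllables it passes. An easy induction on $n$ then yields
\[g^n = k_1 \dots k_m k_1^r \dots k_m^r \dots k_1^{r^{n-1}} \dots k_m^{r^{n-1}} r^n,\]
with each $k_i^{r^j}$ still lying in the same factor as $k_i$.

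It then remains to verify that this expression is reduced. Within a single block $(k_1^{r^j}, \dots, k_m^{r^j})$ consecutive syllables alternate between $K_1$ and $K_2$, because they do so in the reduced expression $g = k_1 \dots k_m r$ and conjugation by $r^j$ preserves factors. At the junction between consecutive blocks the relevant syllables are $k_m^{r^j}$ and $k_1^{r^{j+1}}$; these lie in distinct factors precisely because $g$ is cyclically reduced, i.e. $k_1$ and $k_m$ belong to different factors. Hence no two neighbouring syllables share a factor and no merging occurs, so the displayed word is the reduced expression for $g^n$ (and in particular $\lvert g^n\rvert_* = n m$). This is literally equation~\eqref{eq_cocky_bit}, and the corollary follows.

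Since all the substantive content is inherited from Lemma~\ref{lemma:amalgam_CR_powers}, there is no genuine obstacle here; the only point requiring care is the junction-reducedness check, which is exactly where cyclic reducedness of $g$ (as opposed to mere reducedness) is used.
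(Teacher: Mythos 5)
Your proposal is correct and follows exactly the paper's route: the paper gives no separate argument for this corollary but simply points back to equation \eqref{eq_cocky_bit} in the proof of Lemma \ref{lemma:amalgam_CR_powers}, which is the same expansion-plus-reducedness check you carry out (parity forcing $m=2l$, pushing the copies of $r$ to the right, and using that $k_1$ and $k_m$ lie in different factors to handle the block junctions). Your write-up is in fact slightly more explicit than the paper's at the junction step, and the only detail left tacit (nontriviality of the conjugated syllables $k_i^{r^j}$) is immediate.
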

\section{Separating cyclic subgroups of graph products in the profinite topology}
\label{section:profinite_case}
The following result is proved by Bobrovskii and Sokolov in \cite{bobr}.
\begin{theorem}
    \label{theorem:bobrovskii}
    Let $G = G_1 \ast_R G_2$ be an amalgam over a common retract, let $g \in G$ be arbitrary, and suppose that $G_1$ and $G_2$ are residually finite. Then $\langle g \rangle$ is not separable in $G$ if and only if $g$ is conjugate to some $g_i \in G_i$, where $i \in \{1,2\}$, and $\langle g_i \rangle$ is not separable in $G_i$.
\end{theorem}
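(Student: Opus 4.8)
The plan is to prove the two implications separately: the forward (``if'') direction and the elliptic case of the reverse direction are soft, while the hyperbolic case of the ``only if'' direction is the heart of the matter. First I would record some preliminaries. Since $G_1,G_2$ are residually finite and $R$ is a common retract, the amalgam $G=G_1\ast_R G_2$ is itself residually finite (this is standard for amalgams over a retract). Writing $\pi_i\colon G\to G_i$ for the retraction that is the identity on $G_i$ and equals $\rho_j$ on the other factor $G_j$ (well defined because $\rho_j|_R=\id_R$), each $G_i$ is a retract of the residually finite group $G$, so by Lemma \ref{lemma:retract_restriction} the topology $\proC(G_i)$ is a restriction of $\proC(G)$. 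I would also use throughout that conjugation $c_h$ is an automorphism, hence a homeomorphism of the profinite topology, so that $\langle g\rangle$ is separable if and only if $\langle hgh^{-1}\rangle$ is.

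For the ``if'' direction, after conjugating I may assume $g=g_i\in G_i$. If $\langle g_i\rangle$ were separable in $G$ then, restricting every separating finite quotient of $G$ to the subgroup $G_i$, it would be separable in $G_i$; contrapositively, non-separability in $G_i$ forces non-separability in $G$, and conjugation-invariance then gives the implication. For the ``only if'' direction I argue by contraposition, distinguishing whether $g$ is conjugate into a factor. In the elliptic case, after conjugating assume $g\in G_1$ with $\langle g\rangle$ separable in $G_1$ (the surviving hypothesis under contraposition); since $\proC(G_1)$ is a restriction of $\proC(G)$, the set $\langle g\rangle$, being closed in $G_1$, is closed in $G$, i.e.\ separable in $G$, as required.

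The hard part is the hyperbolic case: $g$ is \emph{not} conjugate into any factor, and I must show $\langle g\rangle$ is unconditionally separable. Using Lemma \ref{lemma:amalgam_conjugator_length} I may conjugate $g$ to a cyclically reduced $g=k_1\cdots k_m r$ with $m=\lvert g\rvert_*\geq 2$ even, so that by Corollary \ref{cor:spelt_out} the powers satisfy $\lvert g^n\rvert_*=\lvert n\rvert m$. Fix $x\in G\setminus\langle g\rangle$. The strategy is to produce \emph{compatible} finite quotients $q_i\colon G_i\to \bar G_i$ whose restrictions to $R$ factor through one common finite quotient $\bar R$, so that they assemble into a finite amalgam $\bar G=\bar G_1\ast_{\bar R}\bar G_2$ together with an epimorphism $q\colon G\to\bar G$. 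I would take the $q_i$ fine enough that, first, every syllable image $q(k_j)\notin \bar R$ — possible since each $k_j\notin R$ and $R$ is closed in the residually finite $G_i$ — so that $\bar g:=q(g)$ is again cyclically reduced of amalgam length $m\geq 2$, hence an infinite-order hyperbolic element with $\langle\bar g\rangle\cong\mathbb{Z}$ and $\lvert\bar g^{\,n}\rvert=\lvert n\rvert m$; and, second, $q(x)\neq \bar g^{\,n}$ for each of the finitely many exponents with $\lvert n\rvert\leq \lvert x\rvert_*/m$. This suffices to force $q(x)\notin\langle\bar g\rangle$: if $q(x)=\bar g^{\,n}$ then $\lvert q(x)\rvert=\lvert n\rvert m\leq\lvert x\rvert_*$, since amalgam length cannot increase under $q$, so $n$ lies in the finite range already excluded. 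Finally, $\bar G$ is the fundamental group of a finite graph of finite groups, hence finitely generated virtually free and therefore LERF; thus $\langle\bar g\rangle$ is closed in $\bar G$, and a further finite quotient of $\bar G$, and so of $G$, separates $q(x)$ from $\langle\bar g\rangle$.

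The main obstacle is precisely this construction in the hyperbolic case: one must build a single finite amalgam quotient that simultaneously preserves the full normal form of $g$ — so that hyperbolicity and the length formula $\lvert\bar g^{\,n}\rvert=\lvert n\rvert m$ persist — and separates $x$ from the finitely many candidate powers. The enabling facts are that $R$, being a retract, is closed in each $G_i$ with $\proC(R)$ the restriction of $\proC(G_i)$, which lets me keep the syllables out of $\bar R$ while keeping the two quotients compatible along $R$ (the relevant finite quotients correspond to finite-index normal subgroups $N_i\trianglelefteq G_i$ with $N_1\cap R=N_2\cap R$, a condition preserved under intersection, so finitely many requirements can be met at once), together with the a priori bound $\lvert n\rvert\leq\lvert x\rvert_*/m$ that makes the list of candidate powers finite. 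Checking that amalgam length is non-increasing under $q$ and that cyclic reducedness is preserved are the routine-but-delicate points underpinning the whole argument.
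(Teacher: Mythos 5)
The paper does not actually prove Theorem \ref{theorem:bobrovskii}: it is quoted from Bobrovskii--Sokolov \cite{bobr}, so there is no in-paper argument to compare yours against. On its own terms your proposal is correct and follows the standard route for this kind of statement. The elliptic implications do follow from Lemma \ref{lemma:retract_restriction} (each $G_i$ is a retract of the residually finite group $G$) exactly as you say, and in the hyperbolic case the reduction to a finite-vertex-group amalgam $\bar G=\bar G_1\ast_{\bar R}\bar G_2$ goes through: compatible finite-index normal subgroups $N_i\trianglelefteq G_i$ with $N_1\cap R=N_2\cap R$ can be manufactured from arbitrary ones by passing to $N_i\cap\rho_i^{-1}(M)$ with $M=(N_1\cap R)\cap(N_2\cap R)$, the condition is stable under intersection, and for any finite-index normal $N\trianglelefteq G$ the kernel of the induced map $G\to (G_1/N\cap G_1)\ast_{R/N\cap R}(G_2/N\cap G_2)$ lies in $N$, so each of your finitely many separation requirements (keeping the syllables of $g$ out of $\bar R$, and killing the finitely many equalities $q(x)=\bar g^{\,n}$ with $|n|\le |x|_*/m$) is realisable by a quotient of this shape. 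Together with the non-increase of $|\cdot|_*$ under $q$ and LERF-ness of the finitely generated virtually free group $\bar G$, this closes the argument.

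One caveat worth fixing: you invoke Corollary \ref{cor:spelt_out} (equivalently Lemma \ref{lemma:amalgam_CR_powers}) for the identity $|g^n|_*=|n|\,m$, but those statements carry the standing hypotheses that the factors are torsion-free and that $\langle h\rangle\cap R=\{e\}$ for all $h\in G_i\setminus R$ --- hypotheses you are not entitled to here, where the $G_i$ are merely residually finite. This is a citation problem rather than a mathematical one: for a cyclically reduced $g=k_1\cdots k_m r$ of even length $m\ge 2$ the computation in equation \eqref{eq_cocky_bit} uses neither hypothesis (the adjacent syllables $k_m$ and $k_1^{r}$ lie in different factors, and $k_1^r\ne e$ because $K_1$ is normal in $G_1$), so you should prove this special case directly in two lines instead of citing the corollary.
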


We will use the following lemma to shorten our proofs.
\begin{lemma}
    \label{remark:shortening}
    Let $G = \Gamma \mathcal{G}$ be a graph product of residually finite groups and let $g \in G$ be arbitrary. The cyclic subgroup $\langle g \rangle \leq G$ is separable in $G$ if and only if it is separable in $G_S$, where $S = \supp(g)$. Furthermore, $\langle g \rangle$ is separable in $G$ if and only if $\langle g' \rangle$ is separable in $G$ for some (and hence for all) $g' \in g^G$.
\end{lemma}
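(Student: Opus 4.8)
The plan is to treat the two claims separately; throughout, $\C$ denotes the class of all finite groups, so that ``separable'' is synonymous with ``$\C$-closed'' in the sense of Section \ref{section:proC_topologies}, and both claims reduce to general facts about $\proC$ topologies already established.

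For the first claim, the key observation is that $g \in G_S$: by the very definition of $\supp(g)$, the element $g$ admits a reduced word all of whose non-trivial syllables lie in vertex groups indexed by $S = \supp(g)$, whence $\langle g \rangle \leq G_S$. Now $G_S$ is a full subgroup, hence a retract of $G$ via the canonical retraction $\rho_S$, and by Green's theorem \cite[Corollary 5.4]{green} the graph product $G$ of residually finite groups is itself residually finite, i.e.\ residually $\C$. I would then invoke Lemma \ref{lemma:retract_restriction} to conclude that $\proC(G_S)$ is a restriction of $\proC(G)$. By the definition of restriction, a subset $X \subseteq G_S$ is $\C$-closed in $G_S$ if and only if it is $\C$-closed in $G$; applying this with $X = \langle g \rangle$ yields exactly that $\langle g \rangle$ is separable in $G$ if and only if it is separable in $G_S$.

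For the second claim, I would exploit that separability is invariant under inner automorphisms. If $g' \in g^G$, say $g' = h g h^{-1}$, then $\langle g' \rangle = h \langle g \rangle h^{-1}$ is the image of $\langle g \rangle$ under the inner automorphism $c_h \colon x \mapsto h x h^{-1}$. Since $c_h$ is a group isomorphism, Lemma \ref{lemma:continuous} guarantees that it is a homeomorphism of $G$ with respect to the profinite topology. Homeomorphisms carry closed sets to closed sets, so $\langle g \rangle$ is $\C$-closed if and only if $\langle g' \rangle$ is $\C$-closed; this equivalence holds for every conjugate simultaneously, which establishes the ``for some (and hence for all)'' formulation.

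Neither step presents a serious obstacle once the restriction machinery of Section \ref{section:proC_topologies} and the continuity of homomorphisms are in place. The only point requiring care is the appeal to residual finiteness of the entire graph product, which is precisely the hypothesis needed to apply Lemma \ref{lemma:retract_restriction} and which follows from Green's result; the vertex groups being residually finite does not by itself suffice, so invoking closure of residual finiteness under graph products is the essential ingredient of the argument.
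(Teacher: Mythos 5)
Your proposal is correct and follows essentially the same route as the paper's own proof: residual finiteness of $G$ via Green's theorem, Lemma \ref{lemma:retract_restriction} applied to the retract $G_S$ for the first equivalence, and the homeomorphism property of inner automorphisms from Lemma \ref{lemma:continuous} for the second. No gaps.
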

\begin{proof}
    Graph products of residually finite groups are residually finite by \cite[Corollary 5.4]{green}, hence $G$ is residually finite. Since $G_S$ is a retract of $G$, its profinite topology  $\PT(G_S)$ is a restriction of $\PT(G)$ by Lemma \ref{lemma:retract_restriction}. Therefore $\langle g\rangle$ is separable in $G$ if and only if it is separable in $G_S$.
    
    Now let $\phi \in \Inn(g)$ be an inner automorphism of $G$ and let $\phi(g) = g'$. Clearly, $\langle g' \rangle = \langle \phi(g) \rangle = \phi\left( \langle g \rangle \right)$. Lemma \ref{lemma:continuous} implies that $\phi$ is a homeomorphism of $\PT(G)$, hence $\langle g \rangle$ is separable in $G$ if and only if $\langle g' \rangle$ is separable in $G$.
\end{proof}

\begin{lemma}
    \label{lemma:gp_separable_elements}
    Let $G = \Gamma \mathcal{G}$ be a graph product of residually finite groups and let $g \in G$ be a cyclically reduced element such that the full subgraph $\Gamma_S$ contains a separating subset, where $S = \supp(g)$. Then the cyclic subgroup $\langle g \rangle \leq G$ is separable in $G$.
\end{lemma}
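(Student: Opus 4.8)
The plan is to exhibit $G$ as an amalgam over a retract in which $g$ is not conjugate into either factor, and then quote the Bobrovskii--Sokolov criterion (Theorem \ref{theorem:bobrovskii}). The first move is a reduction: by Lemma \ref{remark:shortening}, since $G_S$ is a retract of the residually finite group $G$, the subgroup $\langle g\rangle$ is separable in $G$ if and only if it is separable in $G_S$, where $S = \supp(g)$. So I would replace $G$ by $G_S$ and $\Gamma$ by $\Gamma_S$, and assume from now on that $\supp(g) = V\Gamma$. Two things survive this reduction and need to be checked: every vertex group $G_v$ with $v \in V\Gamma$ is non-trivial (it contains a non-identity syllable of $g$, since $v \in \supp(g)$), and $g$ remains cyclically reduced in $G_S$, because cyclic reducedness depends only on the reduced-word structure of $g$, which is unchanged on passing to the full subgraph. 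The hypothesis that $\Gamma_S$ contains a separating subset now says that $\Gamma$ itself contains one.

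Next I would apply Lemma \ref{lemma:split} to obtain a proper splitting $G = G_A \ast_{G_C} G_B$ over the common retract $G_C$, where $A = A' \cup C$ and $B = B' \cup C$, with $A' \sqcup B' = V\Gamma \setminus C$ a partition whose parts are (unions of) connected components of $\Gamma_{V\Gamma \setminus C}$. Since $C$ is separating there are at least two components, so both $A'$ and $B'$ are non-empty; in particular $V\Gamma \setminus A = B' \neq \emptyset$ and $V\Gamma \setminus B = A' \neq \emptyset$. The factors $G_A$ and $G_B$ are full subgroups, hence graph products of residually finite groups, hence residually finite by \cite[Corollary 5.4]{green}. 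Thus the standing hypotheses of Theorem \ref{theorem:bobrovskii} are satisfied for this splitting.

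The heart of the argument is to show that $g$ is conjugate to no element of $G_A$ and to no element of $G_B$. Suppose for contradiction that $g$ were conjugate to some $h \in G_A$. Because $g$ is cyclically reduced, Lemma \ref{lemma:cr_are_small} applied to $h \in g^G$ gives $\supp(g) \subseteq \supp(h) \subseteq A$. But $\supp(g) = V\Gamma$ while $V\Gamma \setminus A = B' \neq \emptyset$, a contradiction; the symmetric argument rules out conjugacy into $G_B$. Hence no element of $g^G$ lies in a factor, so the right-hand side of the equivalence in Theorem \ref{theorem:bobrovskii} cannot hold, and we conclude that $\langle g\rangle$ is separable in $G$ (equivalently, in the original $G_S$, and therefore in the original $G$).

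The proof is short precisely because the real content is delegated to Lemma \ref{lemma:cr_are_small} and Theorem \ref{theorem:bobrovskii}. The only steps I expect to require genuine care, rather than deep ideas, are the bookkeeping in the reduction to $G_S$ (checking that cyclic reducedness and the separating-set hypothesis both persist, and that all relevant vertex groups are non-trivial so that Lemma \ref{lemma:split} applies) and the observation that both $A'$ and $B'$ are non-empty, which is what prevents $\supp(g) = V\Gamma$ from being swallowed by a single factor. I do not anticipate any substantial obstacle beyond tracking supports.
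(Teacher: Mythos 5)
Your proposal is correct and follows essentially the same route as the paper: reduce to $G_S$ via Lemma \ref{remark:shortening}, split $G$ as an amalgam over a retract using the separating set (Lemma \ref{lemma:split}), rule out conjugacy of $g$ into either factor via Lemma \ref{lemma:cr_are_small} and the fact that $\supp(g)=V\Gamma$, and conclude with Theorem \ref{theorem:bobrovskii}. The extra bookkeeping you flag (non-triviality of vertex groups, persistence of cyclic reducedness, non-emptiness of $A'$ and $B'$) is handled implicitly in the paper but is correctly verified in your write-up.
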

\begin{proof}
    
    Using Lemma \ref{remark:shortening}, we may assume that $S = V\Gamma$ and subsequently $\Gamma = \Gamma_S$. As $\Gamma$ is not complete, there exist a pair of vertices $u,v \in V\Gamma$ and a separating set $C \subseteq V\Gamma$ such that $u, v$ lie in distinct connected components of $\Gamma \setminus C$, say $\Gamma_{A'}$\ and $\Gamma_{B'}$, for $A',B'\subseteq V\Gamma\setminus C$. Without loss of generality we may assume that $V\Gamma = A' \cup B' \cup C$. Set $A = A' \cup C$ and $B = B' \cup C$. As mentioned in Lemma \ref{lemma:split}, $G$ splits as an amalgam over a common retract $G = G_A \ast_{G_C} G_B$. 
    By Lemma \ref{lemma:nft for amalgams over retracts}, the element $g$ can be written as \begin{displaymath}
        g = a_1 b_1 \dots a_n b_n r,
    \end{displaymath}
    for some uniquely given $a_1, \dots a_n \in \ker(\rho_A)$, $b_1, \dots b_n \in \ker(\rho_B)$ and $r \in G_C$, where $\rho_A \colon G_A \to G_C$ and $\rho_B \colon G_B \to G_C$ are the canonical retractions. 
    
    As the element $g$ is cyclically reduced in the sense of graph products and $\supp(g)=V\Gamma$, $g$ cannot be conjugated to an element of any of the two proper subgroups $G_A$ or $G_B$, by Lemma \ref{lemma:cr_are_small}.
    By Theorem~\ref{theorem:bobrovskii}, it must be that $\langle g \rangle$ is separable in $G$.
\end{proof}

Combining the lemma above with Lemma \ref{remark:shortening}, we immediately get the following.
\begin{corollary}
    \label{corollary:separable_elements}
    Let $\Gamma$ be a graph, $\mathcal{G} = \{G_v \mid v \in V\Gamma\}$ be a family of residually finite groups and $G = \Gamma \mathcal{G}$ be the corresponding graph product. Suppose that $g \in G$ is an arbitrary element such that $\Gamma_S$ contains a separating subset, where $S = \esupp(g)$. Then the cyclic subgroup $\langle g \rangle \leq G$ is separable in $G$.
\end{corollary}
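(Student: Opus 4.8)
The plan is to reduce to the cyclically reduced situation already settled in Lemma \ref{lemma:gp_separable_elements}, and then transport separability back along a conjugacy using Lemma \ref{remark:shortening}. First I would unwind the definition of the essential support: by construction $\esupp(g) = \supp(g')$ for some cyclically reduced element $g' \in g^G$. Hence there is a cyclically reduced conjugate $g'$ of $g$ with $\supp(g') = S = \esupp(g)$. Since the full subgraph $\Gamma_S$ contains a separating subset by hypothesis, and $\Gamma_{\supp(g')} = \Gamma_S$, Lemma \ref{lemma:gp_separable_elements} applies verbatim to $g'$ and gives that $\langle g' \rangle$ is separable in $G$.

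To conclude, I would invoke the conjugacy-invariance clause of Lemma \ref{remark:shortening}: because $g' \in g^G$, the cyclic subgroup $\langle g \rangle$ is separable in $G$ if and only if $\langle g' \rangle$ is separable in $G$. Combining this equivalence with the separability of $\langle g' \rangle$ obtained in the previous step yields that $\langle g \rangle$ is separable in $G$, as required.

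I do not anticipate any genuine obstacle, since all the substantive work was already carried out in Lemma \ref{lemma:gp_separable_elements}; the statement is designed to be an immediate corollary. The only point worth flagging is that the hypothesis is phrased in terms of $S = \esupp(g)$ rather than $\supp(g)$, so one should note that $\esupp(g)$ is well defined: by \cite[Lemma 3.12]{mf} any two cyclically reduced conjugates of $g$ share the same support, so the condition that $\Gamma_S$ contains a separating subset is unambiguous and is exactly what is needed to feed into Lemma \ref{lemma:gp_separable_elements}.
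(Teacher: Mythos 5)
Your argument is correct and is exactly the paper's: the paper derives this corollary by combining Lemma \ref{lemma:gp_separable_elements} (applied to a cyclically reduced conjugate $g'$ with $\supp(g')=\esupp(g)$) with the conjugacy-invariance clause of Lemma \ref{remark:shortening}. Your note that $\esupp$ is well defined via \cite[Lemma 3.12]{mf} matches the paper's earlier discussion, so there is nothing to add.
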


\begin{lemma}
    \label{lemma:restriction_cyclic}
    Let $G$ be a CSS group and let $C \leq G$ be an infinite cyclic subgroup. Then $\PT(C)$ is a restriction of $\PT(G)$.
\end{lemma}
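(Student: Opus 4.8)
The plan is to reduce the statement directly to Lemma~\ref{lemma:restriction}, which characterises when $\PT(C)$ is a restriction of $\PT(G)$: this holds precisely when every $N \in \NC(C)$ is $\C$-closed in $G$, where $\C$ is the class of all finite groups. So it suffices to understand the co-$\C$ subgroups of the infinite cyclic group $C$ and to check that each of them is separable in $G$.

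First I would record the shape of the co-$\C$ subgroups of $C$. Writing $C = \langle c \rangle \cong \mathbb{Z}$, a normal subgroup $N \unlhd C$ lies in $\NC(C)$ exactly when the quotient $C/N$ is finite, and these are precisely the subgroups $\langle c^k \rangle$ with $k \geq 1$, for which $C/\langle c^k\rangle \cong \mathbb{Z}/k\mathbb{Z}$. The point I would stress here is that every such $\langle c^k \rangle$, being a finite-index subgroup of an infinite cyclic group, is again an infinite cyclic subgroup, now viewed as a cyclic subgroup of the ambient group $G$.

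Next I would invoke the CSS hypothesis. Since $G$ is cyclic subgroup separable, every cyclic subgroup of $G$ is separable, i.e. closed in the profinite topology of $G$, which is the same as being $\C$-closed. Applying this to the cyclic subgroups $\langle c^k \rangle$ shows that every $N \in \NC(C)$ is $\C$-closed in $G$. This verifies the hypothesis of Lemma~\ref{lemma:restriction}, and the desired conclusion that $\PT(C)$ is a restriction of $\PT(G)$ follows at once.

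There is essentially no serious obstacle in this argument; the only subtlety worth flagging is precisely the observation made above, namely that passing to a finite-index subgroup of $C$ keeps us inside the class of cyclic subgroups of $G$, so that the CSS property applies to them without any further work.
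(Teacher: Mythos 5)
Your argument is correct and follows essentially the same route as the paper: identify the finite-index (open) subgroups of $C$ as cyclic subgroups of $G$, invoke the CSS hypothesis to see they are closed in $G$, and conclude via Lemma~\ref{lemma:restriction}. The paper's proof is just a terser version of exactly this.
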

\begin{proof}
    Let $N \leq C$ be open. As $C$ is cyclic, by necessity, $N$ is cyclic as well. By cyclic subgroup separability of $G$, the subgroup $N$ is closed in $G$. Using Lemma \ref{lemma:restriction} we get the result. 
\end{proof}

The following lemma can be seen as an slight strengthening of \cite[Proposition 4.1]{quasipotency}, where it is shown that the direct product of quasi-potent CSS groups is again CSS. Using a slightly more topological approach, we show that quasi-potency is not necessary. The idea of using restrictions of profinite topologies was suggested to the authors by Ashot Minasyan, a suggestion for which we are very grateful.
\begin{lemma}
    \label{lemma:direct_CSS}
    The class of CSS groups is closed under forming direct products.
\end{lemma}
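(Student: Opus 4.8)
The plan is to reduce the statement about direct products to the already-established facts about restrictions of profinite topologies and about cyclic subgroups of CSS groups. Let $G_1, G_2$ be CSS groups and let $C \leq G_1 \times G_2$ be an arbitrary infinite cyclic subgroup; I must show $C$ is separable in $G_1 \times G_2$. The trivial case is when $C$ is finite, so assume $C = \langle (g_1, g_2) \rangle$ is infinite. The key observation is that $C$ sits inside the cyclic subgroup $\langle g_1 \rangle \times \langle g_2 \rangle$ in a controlled way, and I want to exploit that the projections $\langle g_i \rangle$ are honest cyclic subgroups of the CSS groups $G_i$.

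The first step is to invoke Lemma \ref{lemma:restriction_cyclic}: for each $i$, if $\langle g_i \rangle$ is infinite cyclic then $\PT(\langle g_i \rangle)$ is a restriction of $\PT(G_i)$ (and if $\langle g_i \rangle$ is finite, it is automatically closed since CSS groups are residually finite, so $\PT(\langle g_i \rangle)$ is still a restriction). Next I would apply Lemma \ref{lemma:restriction_product} to conclude that $\PT(\langle g_1 \rangle \times \langle g_2 \rangle)$ is a restriction of $\PT(G_1 \times G_2)$. This is the crucial structural reduction: by the definition of restriction it suffices to show that $C$ is separable inside the smaller group $D := \langle g_1 \rangle \times \langle g_2 \rangle$, because a subset of $D$ is $\C$-closed in $D$ if and only if it is $\C$-closed in $G_1 \times G_2$.

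The final step is therefore purely a statement about a cyclic subgroup $C$ inside a finitely generated abelian group $D = \langle g_1 \rangle \times \langle g_2 \rangle$, which is of rank at most $2$. Finitely generated abelian groups are CSS (indeed all their subgroups are separable, since they are polycyclic and hence subgroup separable), so $C$ is separable in $D$. Combined with the restriction property, this yields that $C$ is separable in $G_1 \times G_2$, completing the proof. The result for finitely many factors then follows by an immediate induction, since a finite direct product can be grouped as an iterated product of two factors.

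I expect the main obstacle to be the minor bookkeeping around the possibility that one of the projections $\langle g_i \rangle$ is finite or trivial, which must be handled so that Lemma \ref{lemma:restriction_product} still applies; this is routine but needs a clean statement that $\PT(\langle g_i \rangle)$ is a restriction of $\PT(G_i)$ in all cases. The genuinely substantive content is entirely packaged inside the preceding lemmas on restrictions, so once the reduction to the abelian group $D$ is made, the argument closes quickly by appealing to subgroup separability of finitely generated abelian groups.
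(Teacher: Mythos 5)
Your argument is essentially identical to the paper's own proof: you reduce to the subgroup $\langle g_1\rangle\times\langle g_2\rangle$ via Lemma \ref{lemma:restriction_cyclic} and Lemma \ref{lemma:restriction_product}, and then invoke subgroup separability (LERF) of finitely generated abelian groups. Your extra bookkeeping for the case where some $\langle g_i\rangle$ is finite (it is still a restriction, since finite subsets of a residually finite group are closed) is a small point the paper leaves implicit, but the route is the same.
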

\begin{proof}
    Let $G_1, G_2$ be CSS groups and let $C \leq G_1 \times G_2$ be cyclic. Let $g = (g_1, g_2) \in G_1 \times G_2$ be a generator of $C$. Set $C_1 = \langle g_1 \rangle \leq G_1$ and $C_2 = \langle g_2 \rangle \leq G_2$. Using Lemma \ref{lemma:restriction_cyclic} we see that $\PT(C_1)$ is a restriction of $\PT(G_1)$ and $\proC(C_2)$ is a restriction of $\PT(G_2)$. It follows by Lemma \ref{lemma:restriction_product} that $\PT(C_1 \times C_2)$ is a restriction of $\PT(G_1 \times G_2)$. Notice that $C_1 \times C_2$ is finitely generated abelian, hence it is LERF. This means that $C$ is closed in $C_1 \times C_2$ and hence $C$ is closed in $G_1 \times G_2$.
\end{proof}
Lemma \ref{lemma:direct_CSS} was already proved combinatorially in \cite[Theorem 4]{stebe}. We believe that our proof is bit more accessible and moreover, under additional assumptions, it can be generalised to pro-$p$ topologies (see Lemma \ref{lemma:direct_pCSS}).

Theorem \ref{proposition:GP_CSS} follows from Theorem \ref{general:GP_CSS}, noting that its hypotheses are satisfied in view of Lemma \ref{lemma:direct_CSS} and of Corollary \ref{corollary:separable_elements}.

\section{Unique roots}
\label{section:unique_roots}
\begin{definition}[{\bf Unique roots}]\label{def:unique_roots}
Let $G$ be a group, and $g\in G$ be an element.
We say that an element $r\in G$ is a \emph{root} of $g$ if there is a positive integer $n \in \mathbb{N}$ such that $r^n = g$ in $G$.
We say that $g \in G$ has \emph{unique roots} if the equation $x^n = g$ has at most one solution for every $n \in \mathbb{N}$, i.e. for every $x,y \in G$ and every $n \in \mathbb{N}$ the equality $x^n = g =y^n$ implies $x = y$. A group $G$ is said to have the \emph{Unique Root property} if every $g \in G$ has unique roots.

We will use $\U$ to denote the class of all groups with Unique Root property.
\end{definition}
As inverses are unique, replacing natural numbers by integers in the definition does not change the notion.
Moreover, if a group has non-trivial torsion elements, then it does \emph{not} have unique roots.

The aim of this section is to establish that the class $\U$ is closed under taking graph products. We start with a fact that will be used in Proposition~\ref{lemma:urp_amalgams}.
\begin{lemma}
    \label{lemma:urp_condition}
    Let $G$ be a group and let $R \leq G$ be a retract. If $G\in\U$ then for every $g \in G \setminus R$ we have $\langle g \rangle \cap R = \{e \}$.
\end{lemma}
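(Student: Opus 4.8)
The plan is to exploit the defining property of a retract together with the Unique Root property, turning the hypothesis $\langle g\rangle\cap R\neq\{e\}$ into a genuine equation of $n$-th powers that unique roots can resolve.

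First I would fix a retraction $\rho\colon G\to R$, so that $\rho$ is a homomorphism with $\rho(r)=r$ for every $r\in R$. Arguing by contradiction, I would assume that $\langle g\rangle\cap R\neq\{e\}$ for some $g\in G\setminus R$. Then there is a non-trivial element of the form $g^n$ lying in $R$; since $g^0=e$ this forces $n\neq 0$, and after possibly replacing $g^n$ by its inverse $g^{-n}=(g^n)^{-1}\in R$ I may assume $n>0$.

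The key step is then to apply $\rho$ to the relation $g^n\in R$. Because $\rho$ is a homomorphism and restricts to the identity on $R$, we obtain
\[\rho(g)^n=\rho(g^n)=g^n.\]
Writing $r=\rho(g)\in R$, this reads $r^n=g^n$ as an equation in $G$. Since $G\in\U$, the equality $x^n=g^n$ has a unique solution, so $r=g$; in particular $g=r\in R$, contradicting $g\in G\setminus R$. Hence $\langle g\rangle\cap R=\{e\}$, as claimed.

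There is essentially no hard obstacle here: the only points requiring a little care are ensuring $n>0$ (so that one compares honest $n$-th powers rather than inverses, although by the remark following Definition \ref{def:unique_roots} integer exponents would serve equally well) and recording that the identity $\rho(g^n)=g^n$ relies precisely on $g^n\in R$ together with $\rho|_R=\id_R$.
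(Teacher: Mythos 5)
Your proof is correct and follows essentially the same route as the paper's: both apply the retraction $\rho$ to the relation $g^n\in R\setminus\{e\}$ to obtain $\rho(g)^n=g^n$ and then invoke the Unique Root property to force $g=\rho(g)\in R$, a contradiction. The extra care you take with the sign of $n$ is a harmless refinement the paper leaves implicit.
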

\begin{proof}
    Suppose that there is $g \in G \setminus R$ and a $n \in \mathbb{N}$ such that $g^n \in R \setminus \{e\}$. Let $\rho \colon G \to R$ be the retraction corresponding to $R$ and set $\rho(g) = r \in R$. We see that $g^n = \rho(g^n) = r^n$. However $g \neq r$, contradicting the Unique Root property.
\end{proof}

 \begin{lemma}
    \label{lemma:urp_direct}
    The class $\U$ is closed under taking subgroups and direct products.
 \end{lemma}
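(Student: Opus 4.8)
The plan is to work directly from the reformulation of the Unique Root property recorded just after Definition \ref{def:unique_roots}: a group $G$ belongs to $\U$ if and only if for all $x, y \in G$ and all $n \in \mathbb{N}$ the equality $x^n = y^n$ forces $x = y$. Both closure statements follow by unwinding this single condition, so I do not expect a genuine obstacle here; the only point requiring a moment's care is the observation that the power map in a direct product is computed coordinatewise, which is precisely what lets the argument split across the factors.

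For subgroups, I would argue as follows. Suppose $G \in \U$ and let $H \leq G$. Given $x, y \in H$ and $n \in \mathbb{N}$ with $x^n = y^n$, regard $x$ and $y$ as elements of the ambient group $G$. Since $G \in \U$, the equality $x^n = y^n$ already yields $x = y$, and this conclusion holds inside $H$. Hence $H \in \U$, and no properties beyond the defining implication are used.

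For direct products, I would take $G_1, G_2 \in \U$ and consider $(x_1, x_2), (y_1, y_2) \in G_1 \times G_2$ together with $n \in \mathbb{N}$ satisfying $(x_1, x_2)^n = (y_1, y_2)^n$. Because multiplication in $G_1 \times G_2$ is componentwise, this equation is equivalent to the pair of equalities $x_1^n = y_1^n$ in $G_1$ and $x_2^n = y_2^n$ in $G_2$. Applying the Unique Root property separately in each factor gives $x_1 = y_1$ and $x_2 = y_2$, whence $(x_1, x_2) = (y_1, y_2)$ and $G_1 \times G_2 \in \U$. The same reasoning passes verbatim to an arbitrary finite direct product, either by iterating the two-factor case or by treating each coordinate independently. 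Since the entire verification is a direct unwinding of the definition, the main (and only) thing to get right is the coordinatewise description of powers, after which both conclusions are immediate.
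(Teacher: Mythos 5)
Your proposal is correct and follows essentially the same argument as the paper: subgroups are handled by viewing the elements inside the ambient group, and direct products by the coordinatewise computation of powers, exactly as in the paper's proof.
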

 \begin{proof}
    Let $G$ be a group with Unique Root property and suppose that $H \leq G$. Let $x, y \in H$ be arbitrary and suppose that $x^n = y^n$ for some $n \in \mathbb{N}$. As $x, y \in G$ and $G\in \U$, we see that $x = y$. 
 
    For direct products, we prove the statement for a direct product of two groups. The argument applies to any number (finite or not) of direct factors.
    Let $G_1, G_2\in\U$, let $x = (x_1, x_2), y = (y_1, y_2) \in G_1\times G_2$ be arbitrary elements and suppose that $x^n = y^n$ for some $n \in \mathbb{N}$. This means that $(x_1^n, x_2^n) = (y_1^n, y_2^n)$, i.e. $x_1^n = y_1^n$ in $G_1$ and $x_2^n = y_2^n$ in $G_2$. By unique roots, we conclude that $x_1 = y_1$ in $G_1$, and that $x_2 = y_2$ in $G_2$. Therefore $x = y$, and thus the direct product $G_1 \times G_2$ has the Unique Root property.
 \end{proof}

In particular, any retract of a group with the Unique Root property also has the Unique Root property.

In the following proposition we prove that Unique Root property is preserved under taking amalgamations along retracts, and in particular by free products.
\begin{proposition}
    \label{lemma:urp_amalgams}
	The class $\U$ is closed under taking amalgams over retracts.
\end{proposition}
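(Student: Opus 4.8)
The plan is to show that if $G_1 = K_1\rtimes R$ and $G_2 = K_2\rtimes R$ lie in $\U$, then so does the amalgam $G = G_1\ast_R G_2 \cong (K_1\ast K_2)\rtimes R$ of \eqref{eq_normal_form}. Since a group with non-trivial torsion cannot have unique roots, the factors $G_1,G_2$ are torsion-free, and Lemma \ref{lemma:urp_condition} guarantees that $\langle g\rangle\cap R=\{e\}$ for every $g\in G_i\setminus R$; thus the hypotheses of Lemma \ref{lemma:amalgam_CR_powers} and Corollary \ref{cor:spelt_out} are satisfied. Moreover $R$ is a subgroup of $G_1\in\U$, so $R\in\U$ by Lemma \ref{lemma:urp_direct}. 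I fix $x,y\in G$ and an integer $n\ge 1$ with $x^n=y^n$, and the goal is to deduce $x=y$ (the case of negative exponents being reducible to this since inverses are unique).

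The key reduction is to pass to cyclically reduced representatives simultaneously. As conjugation is injective and $(c^{-1}xc)^n=(c^{-1}yc)^n$ whenever $x^n=y^n$, I may replace the pair $(x,y)$ by a common conjugate, and so—using Lemma \ref{lemma:amalgam_conjugator_length}—assume that $x$ is cyclically reduced. Then $w:=x^n$ is cyclically reduced by Lemma \ref{lemma:amalgam_CR_powers}, and since $y^n=w$ is cyclically reduced the same lemma forces $y$ to be cyclically reduced as well. Now I split on $\lvert x\rvert_*$. If $\lvert x\rvert_*=m\ge 2$, Corollary \ref{cor:spelt_out} gives the reduced expression of $w=x^n$, whence $\lvert w\rvert_*=nm$; the length formula of Lemma \ref{lemma:amalgam_CR_powers} applied to $y$ then yields $\lvert y\rvert_*=m$, and Corollary \ref{cor:spelt_out} again produces the reduced expression of $w=y^n$. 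Writing $x=k_1\cdots k_m r$ and $y=h_1\cdots h_m s$, uniqueness of the reduced expression (Lemma \ref{lemma:nft for amalgams over retracts}) forces $r^n=s^n$ and matches the two length-$nm$ syllable words letter by letter. From $r^n=s^n$ and $R\in\U$ we obtain $r=s$, and comparing the first $m$ syllables gives $k_i=h_i$ for all $i$; hence $x=y$.

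It remains to treat the degenerate case $\lvert x\rvert_*\le 1$, which I expect to be the only real technical point. Here $x$ lies in a factor $G_i$, so $w=x^n\in G_i$ and $\lvert w\rvert_*\le 1$; the length formula of Lemma \ref{lemma:amalgam_CR_powers} then forces $\lvert y\rvert_*\le 1$, so $y$ lies in some factor $G_j$. If $w=e$, torsion-freeness gives $x=y=e$. If $w\ne e$ and $i=j$, then $x,y\in G_i\in\U$ and unique roots give $x=y$ at once. The delicate subcase is $i\ne j$: then $w\in G_1\cap G_2=R$, and since $x^n=w\in R\setminus\{e\}$ the condition $\langle x\rangle\cap R=\{e\}$ of Lemma \ref{lemma:urp_condition} forces $x\in R$, and likewise $y\in R$; as $x^n=y^n$ now holds in $R\in\U$, we conclude $x=y$. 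The crux of the whole argument is thus ensuring, via $G_1\cap G_2=R$ together with Lemma \ref{lemma:urp_condition}, that elements of distinct factors sharing a common power are already trapped inside the amalgamated subgroup.
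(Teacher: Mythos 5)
Your proof is correct and follows essentially the same route as the paper's: reduce to cyclically reduced elements via Lemma \ref{lemma:amalgam_conjugator_length}, expand the $n$-th powers using Lemma \ref{lemma:amalgam_CR_powers} and Corollary \ref{cor:spelt_out}, and match syllables by uniqueness of reduced expressions. The only cosmetic differences are that the paper first applies the retraction $\rho$ to get $r_x=r_y$ and then invokes the normal form theorem in $K_1\ast K_2$, whereas you read everything off the amalgam's reduced expression directly, and your handling of the degenerate case $\lvert x\rvert_*\leq 1$ (trapping $x$ and $y$ in $R$ via Lemma \ref{lemma:urp_condition} when they lie in different factors) is spelled out more explicitly than in the paper.
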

\begin{proof}
	Let $G_1 = K_1 \rtimes R$, $G_2 = K_2 \rtimes R$ be groups in $\U$, let $\rho_1 \colon G_1 \to R$ and $\rho_2 \colon G_2 \to R$ be the corresponding canonical retractions. Set
	\begin{displaymath}
		G = G_1 \ast_R G_2 \cong (K_1 \ast K_2)\rtimes R.
	\end{displaymath}
	and let $\rho \colon G \to R$ be the natural extension of $\rho_1, \rho_2$ to $G$.
	
	Let $x, y \in G$ be arbitrary elements  such that $x^n  = y^n$ for some $n \geq 2$. Let $r_x, r_y \in R$ and $k_x, k_y \in K_1 \ast K_2$ be the uniquely given elements such that $x = k_x r_x$ and $y = k_y r_y$. 
	
	As $\rho(x^n) = \rho(y^n)$, we have that $r_x^n = r_y^n$. The retract $R$ has the Unique Root property by Lemma \ref{lemma:urp_direct}, and therefore we conclude that $r_x = r_y$, which we denote by $r$. We see that
	\begin{align*}
		x^n  &= k_x k_x^r \dots k_x^{r^{n-1}} r^n,\\
		y^n  &= k_y k_y^r \dots k_y^{r^{n-1}} r^n.
	\end{align*}
	As $x^n=y^n$, we obtain that
	\begin{equation}\label{equation_UR_amalgams}
		 k_x k_x^{r} \dots k_x^{r^{n-1}} = k_y k_y^{r} \dots k_y^{r^{n-1}},
	\end{equation}
	and we denote this element by $k$.
    
    Without loss of generality, we can suppose that $x^n$ (and, consequently, also $y^n$) is cyclically reduced. Indeed, if this is not the case, by Lemma \ref{lemma:amalgam_conjugator_length} there exists a prefix $c$ of $x^n$ such that $c^{-1}x^nc$ is cyclically reduced. Therefore, we can replace $x^n$ and $y^n$ with $c^{-1}x^nc$ and $c^{-1}y^nc$ and proceed considering these elements.
    
    By Lemma \ref{lemma:urp_condition}, the groups $G_1$ and $G_2$ satisfy the hypotheses of Lemma~\ref{lemma:amalgam_CR_powers}. Therefore, applying it, we see that both $x$ and $y$ are cyclically reduced.
	Following Lemma \ref{lemma:amalgam_CR_powers}, we see that $\lvert k\rvert_* =  1$ if and only if $\lvert k_x\rvert_* = 1$, if and only if $\lvert k_y\rvert_* = 1$. In this case, it follows that both elements $k_x$ and $k_y$ must belong to the same factor, which without loss of generality we assume to be $K_1$. This means that both $x, y \in G_1$, and from $x^n=y^n$ we conclude that $x = y$, as $G_1\in\U$.
	
	\smallskip
	Now suppose that $\lvert k\rvert_* > 1$. Therefore $\lvert k_x\rvert_*$ and $\lvert k_y\rvert_*$ are greater than one by Lemma \ref{lemma:amalgam_CR_powers}, and moreover $\lvert k\rvert_* = n\lvert k_x\rvert_* = n\lvert k_y\rvert_*$. Suppose that $k_x = k_1 \dots k_m$ is a reduced expression for $k_x$ in $K_1 \ast K_2$, and that $k_y = h_1 \dots h_m$ is a reduced expression for $k_y$, where $m=\lvert k_x\rvert_*=\lvert k_y\rvert_*$. As we assumed $x^n$ to be cyclically reduced, we conclude that both these expressions are cyclically reduced. 
	
	From equation \eqref{equation_UR_amalgams}, $k$ can be expressed as
	\begin{equation*}
	\begin{split}
	   (k_1 \dots k_l) (k_1^r \dots k_l^r) \dots &(k_1^{r^{n-1}} \dots k_l^{r^{n-1}}) = \\ &=(h_1 \dots h_m)( h_1^r \dots h_m^r) \dots (h_1^{r^{n-1}} \dots h_m^{r^{n-1}}).
	\end{split}
	\end{equation*}
	By the normal form theorem for free products (see \cite[Theorem 4.1]{mks} or \cite[Chapter IV, Theorem 1.2]{ls}) we obtain that $k_i = h_i$ for $i = 1, \dots, m$. Hence $k_x = k_y$, and therefore
	\[x =k_xr=k_yr= y.\] 
    Thus $G\in\U$.
\end{proof}

By Lemma \ref{lemma:urp_direct} and Proposition \ref{lemma:urp_amalgams}, applying Theorem \ref{general:GP_CSS} we conclude:
\begin{theorem}
	The class $\U$ is closed under taking graph products.
	\label{lemma:urp_gp}
\end{theorem}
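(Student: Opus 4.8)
The plan is to apply Theorem \ref{general:GP_CSS} with $\mathcal{P}$ taken to be the Unique Root property, and then to bootstrap from the finite case to arbitrary graph products by exploiting finiteness of support.

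First I would verify that the class $\U$ satisfies the two hypotheses of Theorem \ref{general:GP_CSS}. By Lemma \ref{lemma:urp_direct} the class $\U$ is closed under finite direct products, and by Proposition \ref{lemma:urp_amalgams} it is closed under amalgams over a common retract. These are exactly the two closure properties required, so Theorem \ref{general:GP_CSS} yields at once that $\U$ is preserved under \emph{finite} graph products: whenever $\Gamma$ is a finite graph and every vertex group $G_v$ lies in $\U$, then $\Gamma\mathcal{G}\in\U$.

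It then remains to remove the finiteness assumption on $\Gamma$. Here I would use that the Unique Root property is witnessed by finitely many elements at a time. Let $G = \Gamma\mathcal{G}$ with all $G_v \in \U$, and suppose $x^n = y^n$ for some $x, y \in G$ and some positive integer $n$. By the Normal Form Theorem (Theorem \ref{nft}) the elements $x$ and $y$ have finite support, so there is a finite subset $S \subseteq V\Gamma$ with $x, y \in G_S$. The full subgroup $G_S$ is naturally isomorphic to the graph product of the finite family $\{G_v \mid v \in S\}$ over the finite graph $\Gamma_S$, and each $G_v$ with $v \in S$ lies in $\U$; hence $G_S \in \U$ by the finite case established above. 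Since the equality $x^n = y^n$ already holds inside $G_S$, we conclude $x = y$. As $x, y$ and $n$ were arbitrary, $G$ has the Unique Root property.

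The finite case is a direct invocation of previously established results, so the only point requiring any attention is the reduction of the arbitrary case to the finite one. This reduction is routine once one observes that every element of a graph product has finite support and that full subgroups on finite vertex sets are themselves finite graph products of the corresponding vertex groups; I expect no genuine obstacle to arise.
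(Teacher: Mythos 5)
Your proof is correct and takes essentially the same route as the paper: Lemma \ref{lemma:urp_direct} and Proposition \ref{lemma:urp_amalgams} verify the hypotheses of Theorem \ref{general:GP_CSS}, which settles the case of finite graphs. Your additional reduction of the infinite case to finite full subgroups via finiteness of supports is a detail the paper leaves implicit (Theorem \ref{general:GP_CSS} is stated only for finite graph products), and it is a correct and worthwhile addition.
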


Since $\mathbb{Z}\in\U$, we re-obtain the following corollary, originally proven in \cite[Lemma~6.3]{ashot_raags}.
\begin{corollary}
    Right-angled Artin groups satisfy the Unique Root property.
\end{corollary}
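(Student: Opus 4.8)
The plan is to observe that right-angled Artin groups are precisely the graph products in which every vertex group is infinite cyclic, and then to invoke the closure of $\U$ under graph products established in Theorem \ref{lemma:urp_gp}. The whole argument is a direct specialisation of that theorem, so the proof will be very short.

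First I would verify the base case $\mathbb{Z} \in \U$. This is immediate: if $x^n = y^n$ for some $x, y \in \mathbb{Z}$ and some positive integer $n$, then since $\mathbb{Z}$ is torsion-free and abelian the $n$-th power map is injective (written additively, it is $t \mapsto nt$), and hence $x = y$. Thus every element of $\mathbb{Z}$ has a unique root and $\mathbb{Z}$ satisfies the Unique Root property.

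Next I would recall that, by definition, a right-angled Artin group is the graph product $\Gamma\mathcal{G}$ associated to a simplicial graph $\Gamma$ whose vertex family $\mathcal{G} = \{G_v \mid v \in V\Gamma\}$ consists entirely of infinite cyclic groups $G_v \cong \mathbb{Z}$. By the previous paragraph each such vertex group lies in $\U$, so Theorem \ref{lemma:urp_gp} applies verbatim and yields that the graph product $\Gamma\mathcal{G}$ belongs to $\U$ as well.

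I expect no genuine obstacle here, since the result is an immediate corollary of Theorem \ref{lemma:urp_gp}; the only point requiring (minimal) care is the base case $\mathbb{Z}\in\U$, which is settled by the torsion-freeness argument above.
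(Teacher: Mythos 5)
Your proof is correct and follows exactly the paper's route: the paper likewise notes that $\mathbb{Z}\in\U$ and deduces the corollary immediately from Theorem \ref{lemma:urp_gp}. Your explicit verification of the base case $\mathbb{Z}\in\U$ is a harmless (and welcome) addition to what the paper leaves implicit.
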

\section{Primitive roots and $p$-isolation}
\label{section:primitive}
	From now, we will consider the class $\C$ to consist of all finite $p$-groups for some prime number $p$. Given a group $G$, we use $\Np(G)$ to denote the set of all co-$p$-finite subgroups of $G$ and we use $\prop(G)$ to denote the $\prop$ topology on $G$. Also, for a subset $X \subseteq G$ we use the term $p$-separable or $p$-closed in $G$ to denote that $X$ is closed in the $\prop$ topology on $G$.
	
	First, let us consider the following example.
	\begin{example}
	\label{example:p_inseparable_pair}
	    Let $G$ be a an arbitrary infinite group, suppose that there is $g_0 \in G$ such that $\ord_G(g_0) = \infty$ and $g \in G$ such that $g_0 = g^q$ for some prime number $q$ distinct from $p$. Note that $g \not\in \langle g_0 \rangle$ and $\langle g_0 \rangle \leq \langle g \rangle$. Assume that $\pi \colon G \twoheadrightarrow Q$ is a surjective homomorphism onto some finite $p$-group $Q$. Then $\langle \pi(g)\rangle$ is a cyclic group of order $p^e$ for some $e \in \mathbb{N}$. As $\pi(g_0) = \pi(g)^q$ and $\gcd(q, p^e) = 1$, we see that $\pi(g_0)$ generates $\langle \pi(g)\rangle$ and therefore $\pi(g) \in \langle\pi(g_0)\rangle = \pi\left(\langle g_0\rangle\right)$. In particular, the cyclic subgroup $\langle g_0 \rangle \leq G$ is not closed in the pro-$p$ topology on $G$. 
	\end{example}
	This example motivates the following definition.
\begin{definition}[{\bf $p$-isolation}]\label{def:p.isolation}
    Let $G$ be a group and let $H \leq G$. We say that $H$ is $p$-\emph{isolated} in $G$ if for every $f \in G$ and every prime number $q$ distinct from $p$ the following holds:
    \begin{displaymath}
        f^q \in H \quad\Rightarrow \quad f \in H. 
    \end{displaymath}
    An element $g\in G$ is said to be $p$-isolated in $G$ if the cyclic subgroup $\langle g\rangle$ is $p$-isolated in $G$. A subgroup $H \leq G$ is said to be \emph{isolated} in $G$ if it is $p$-isolated for every prime $p$.
\end{definition}
    The authors of \cite{bobr} use the term \emph{$p'$-isolated} for the same notion. To ease the notation, we decided to drop the $'$ as there is no chance of confusion.

    Following Example \ref{example:p_inseparable_pair}, we see that being $p$-isolated is a necessary condition for a subgroup to be $p$-separable, hence it makes sense to consider $p$-separability only for $p$-isolated subgroups. However, it was shown in \cite{bardakov} that a non-abelian free group contains a finitely generated subgroup which is $p$-isolated but not $p$-separable for any prime number $p$. 
    
	Therefore, we pose the following definition:
	\begin{definition}[{\bf $p$-cyclic subgroup separability}]
	A group $G$ is \emph{$p$-cyclic subgroup separable} ($p$-CSS) if every $p$-isolated cyclic subgroup of $G$ is $p$-separable in $G$. 
    \end{definition}
    
    The aim of this section is to give a useful description of $p$-isolated cyclic subgroups of groups.
    \begin{lemma}
        \label{lemma:simplified_p-isolation}
        Let $G$ be a group and suppose that $g \in G$ is of infinite order. 
        The cyclic group $\langle g \rangle \leq G$ is $p$-isolated if and only if
        for every $n \in \mathbb{N}$ coprime to $p$ and every $f \in G$
        \begin{displaymath}
            f^n \in \langle g \rangle\quad \Rightarrow\quad f \in \langle g \rangle.
        \end{displaymath}
    \end{lemma}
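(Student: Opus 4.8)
The plan is to prove the two implications separately, with the interesting content entirely in the forward direction. The backward direction is immediate: if the displayed implication holds for every $n$ coprime to $p$, then in particular it holds whenever $n = q$ is a prime distinct from $p$, which is precisely the condition that $\langle g\rangle$ be $p$-isolated in the sense of Definition \ref{def:p.isolation}. So no work is needed there.

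For the forward direction, assume $\langle g\rangle$ is $p$-isolated and fix $n \in \mathbb{N}$ coprime to $p$ together with $f \in G$ satisfying $f^n \in \langle g\rangle$. I would argue by induction on the number of prime factors of $n$ counted with multiplicity. The base case $n = 1$ is trivial, since then $f = f^n \in \langle g\rangle$. For the inductive step with $n > 1$, choose any prime $q$ dividing $n$ and write $n = qm$; since $n$ is coprime to $p$, the prime $q$ is necessarily distinct from $p$, and $m$ is again coprime to $p$ with strictly fewer prime factors. The key step is to observe that $(f^m)^q = f^n \in \langle g\rangle$, so applying the $p$-isolation hypothesis to the element $f^m$ and the prime $q$ yields $f^m \in \langle g\rangle$.

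Now $f^m \in \langle g\rangle$ with $m$ coprime to $p$ and having fewer prime factors than $n$, so the induction hypothesis applies and gives $f \in \langle g\rangle$, completing the step.

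There is essentially no obstacle here; the only point requiring a moment's care is to verify that every prime factor of $n$ is automatically distinct from $p$, which is exactly what licenses each application of the single-prime $p$-isolation hypothesis, and that the auxiliary element to which isolation is applied is $f^m$ rather than $f$ itself. I note in passing that the hypothesis that $g$ has infinite order is not actually used in this argument; it is retained only because the cyclic subgroups of interest in the sequel are infinite.
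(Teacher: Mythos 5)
Your proof is correct and follows essentially the same route as the paper's: an induction on the number of prime factors of $n$ counted with multiplicity, peeling off one prime at a time (the paper applies the induction hypothesis to $f^{p_1}$ first and $p$-isolation last, whereas you apply $p$-isolation first and then induct, but this is an immaterial reordering). Your side remark that the infinite-order hypothesis is unused is also accurate.
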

    \begin{proof}
        Only one implication is non trivial, so
        let $f \in G$ be arbitrary and suppose that $f^n \in \langle g \rangle$ for some $n$ coprime to $p$. Let $n = p_1^{e_1} \dots p_m^{e_m}$ be the prime factorisation of $n$. We will proceed by induction on $N = e_1 + \dots + e_m$. 
        
        If $N = 1$ then $n$ is a prime and the statement holds. Suppose that the statement has been proved for all $n'$ whose sum of exponents in the prime decomposition is less than $N$. Set $f' = f^{p_1}$ and $n' = n/{p_1}$. Note that $n'$ is coprime with $p$. As $(f')^{n'} = f^n \in \langle g \rangle$, we have that $f' \in \langle g \rangle$ by induction hypothesis. Moreover $f' = f^{p_1}$, and therefore $f \in \langle g \rangle$.        
    \end{proof}
    Informally speaking, a subgroup is $p$-isolated if it is closed under taking ``$n$-th roots'' for $n$ coprime with $p$. This informal observation motivates the rest of this section.
    
    \begin{definition}[{\bf Primitive roots, and primitive logarithms}]
    \label{definition:plogs}
    Let $G$ be a group.
    As defined in the previous section, an element $r\in G$ is a root of $g\in G$ if there is a positive integer $k \in \mathbb{N}$ such that $r^k = g$ in $G$. 
    We say that $r$ is a \emph{primitive root} of $g$ in $G$ if such $k$ is maximal possible:  
    \[k = \max\{n \in \mathbb{N} \mid \exists r \in G \colon r^n = g\}.\] 
    We use $\sqrt[G]{g}$ to denote the set of all primitive roots of $g$ in $G$. When the group $G$ has unique roots, we slightly abuse notation and use $\sqrt[G]{g}$ to denote the primitive root of $g$ in $G$.
    
    If $r$ is a primitive root of $g$ in $G$ with corresponding exponent $k \in \mathbb{N}$ so that $r^k = g$, then we say that $k$ is the \emph{primitive logarithm} of $g$ in $G$, and we denote it as $k = \plog_G(g)$. If $\sqrt[G]{g}=\emptyset$, then the primitive logarithm is not defined for $g$.
    \end{definition}
    
    If $g\in G$ has finite order $n$, then $\sqrt[G]{g}$ is empty because $g^{kn}=e_G$ for all $k\in\mathbb{N}$, and therefore there is no maximal. 
    
    However, this is not the only case when an element $g\in G$ might not have a primitive root. 
    Indeed, consider the Baumslag-Solitar group 
    \begin{displaymath}
        G = \BS(1,2)=\langle a,t \| tat^{-1} = a^2 \rangle.
    \end{displaymath}
    From the relation of $G$ one deduces that $(t^{-n} a t^n)^{2^n} = a$ for every $n \in \mathbb{N}$, and therefore the element $a$ has no primitive roots: $\sqrt[G]{a} = \emptyset$.

    \begin{remark}
        \label{remark:plog_conjugacy}
        Let $G$ be a group and let $g \in G$ be an element with primitive root. For any $c \in G$ we have that 
        \begin{equation*}
        \plog_G(cgc^{-1}) = \plog_G(g), \qquad \sqrt[G]{cgc^{-1}} = c \left(\sqrt[G]{g}\right)c^{-1}.
        \end{equation*}
    \end{remark}
    \smallskip
    \noindent
    Consider a group $G$ given by the presentation
    \begin{displaymath}
        \langle x, y \| x^p = y^q\rangle,
    \end{displaymath}
    where $p < q$ are distinct primes. Then the element $x$ is its own (unique) primitive root, that is $\sqrt[G]{x}=\{x\}$, but $\sqrt[G]{x^p} = \{y\}$ and $\plog_G(x^p) = q$. 
    
    This motivates the following definition.
    
    \begin{definition}[{\bf Primitive stability}]
    \label{definition:primitive_stability}
    We say that an element $g \in G$ is \emph{primitively stable} in $G$ if $\sqrt[G]{g}$ is defined and $\plog_G(g^n) = n \cdot \plog_G(g)$ for all $n \in \mathbb{N}$. We say that a group $G$ is \emph{primitively stable} if every $g \in G \setminus \{e\}$ is primitively stable. 
    \end{definition}
    Note that primitively stable groups are necessarily torsion-free.
    We denote by $\Ups$ the class of primitively stable groups with unique roots.

    \smallskip
    The following will not be used during the text, but it provides a nice characterisation for primitively stable elements and provides a comparison to Proposition \ref{lemma:cyclic_radical_criterion}.
    \begin{lemma}
    Let $G$ be a group. For any element $g\in G$ such that $\sqrt[G]{g}\neq\emptyset$ and any natural number $n\geqslant 1$ we have that $\plog_G(g^n) = n \cdot \plog_G(g)$ if and only if $\sqrt[G]{g}\subseteq \sqrt[G]{g^n}$.
    \end{lemma}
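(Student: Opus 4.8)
The plan is to fix a primitive root $r\in\sqrt[G]{g}$ — which exists by hypothesis — and write $k=\plog_G(g)$, so that $r^k=g$. The first observation I would record is that $g$ has infinite order: since $\sqrt[G]{g}\neq\emptyset$, the element $g$ cannot have finite order (as noted right after Definition \ref{definition:plogs}), whence every root of $g$, and in particular $r$, also has infinite order. This is precisely the fact that makes the exponent of a given element as a root unique, and it will be used in both directions. The computation driving the whole argument is the trivial identity $r^{nk}=(r^k)^n=g^n$, which exhibits any $r\in\sqrt[G]{g}$ as a root of $g^n$ of exponent $nk=n\plog_G(g)$.

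For the forward implication I would assume $\plog_G(g^n)=nk$ and take an arbitrary $r\in\sqrt[G]{g}$; the identity above presents $r$ as a root of $g^n$ whose exponent $nk$ equals the maximal exponent $\plog_G(g^n)$, so by the definition of primitive root $r\in\sqrt[G]{g^n}$, giving $\sqrt[G]{g}\subseteq\sqrt[G]{g^n}$. For the converse I would assume $\sqrt[G]{g}\subseteq\sqrt[G]{g^n}$ and again fix $r\in\sqrt[G]{g}$; then $r\in\sqrt[G]{g^n}$ is a primitive root of $g^n$, so $r^{\plog_G(g^n)}=g^n$, and comparing this with $r^{nk}=g^n$ while using that $r$ has infinite order yields $\plog_G(g^n)=nk=n\plog_G(g)$.

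The one point requiring care — and the only plausible obstacle — is the bookkeeping of when $\plog_G(g^n)$ is actually defined, that is, when $\sqrt[G]{g^n}\neq\emptyset$. I would handle this by noting that the inclusion on the right-hand side already produces a primitive root of $g^n$, so it forces definedness, and by reading the equation $\plog_G(g^n)=n\plog_G(g)$ as false whenever $\sqrt[G]{g^n}=\emptyset$; with this reading the two implications above are exhaustive, and one never needs to decide whether $\sqrt[G]{g^n}$ can in fact be empty.
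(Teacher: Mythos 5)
Your proof is correct and follows essentially the same route as the paper's: in both directions one fixes $r\in\sqrt[G]{g}$, uses the identity $r^{n\plog_G(g)}=g^n$, and compares exponents. Your explicit remarks that $r$ has infinite order (so that exponents can be compared) and that the right-hand inclusion forces $\sqrt[G]{g^n}\neq\emptyset$ are points the paper leaves implicit, but they do not change the argument.
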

    \begin{proof}
    Suppose that  $\plog_G(g^n) = n \cdot \plog_G(g)$, and let $r\in \sqrt[G]{g}$, so that 
    $r^{plog_G(g)}=g$.
    By taking powers, we obtain that $r^{n\cdot plog_G(g)}=g^n$, and the hypothesis implies that $r^{ plog_G(g^n)}=g^n$. This, by definition, means that $r\in\sqrt[G]{g^n}$. Therefore 
    $\sqrt[G]{g}\subseteq \sqrt[G]{g^n}$.
    
    Suppose now that $\sqrt[G]{g}\subseteq \sqrt[G]{g^n}$, and let $r\in\sqrt[G]{g}$, so that $r^{ plog_G(g)}=g$. Again by taking the $n$-th power, we obtain that 
    $r^{n\cdot plog_G(g)}=g^n$. By assumption $r\in \sqrt[G]{g^n}$, and therefore $r^{ plog_G(g^n)}=g^n $, so that 
\begin{equation*}
r^{ plog_G(g^n)}=r^{n\cdot plog_G(g)}=g^n.
\end{equation*}
As $r$ is a primitive root for $g^n$, it must follow that $plog_G(g^n)= n\cdot plog_G(g)$.
    \end{proof}

    \begin{lemma}
        \label{lemma:common_root}
        Let $G\in\Ups$, and let $x, y \in G$ be such that $x^m = y^n$ for some integers $m,n\geqslant 1$. Then there is $r \in G$ such that $x,y \in \langle r \rangle$. In particular, $\sqrt[G]{x} = \sqrt[G]{y}=\{r\}$.
    \end{lemma}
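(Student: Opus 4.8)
The plan is to show that $x$ and $y$ both lie in the cyclic subgroup generated by the unique primitive root of their common power $w := x^m = y^n$. First I would dispose of the degenerate case: if $x = e$, then $y^n = e$, and since groups in $\Ups$ are torsion-free, this forces $y = e$, whence $r = e$ works and the statement is trivial. So assume $x, y \neq e$; then $w \neq e$ as well, again by torsion-freeness, so that $\plog_G(w)$ is defined.

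Since $G \in \Ups$, every non-trivial element has a well-defined primitive root, which is moreover unique because $G$ has the Unique Root property. Write $r_x = \sqrt[G]{x}$ and $r_y = \sqrt[G]{y}$, and set $a = \plog_G(x)$ and $b = \plog_G(y)$, so that $r_x^a = x$ and $r_y^b = y$. The heart of the argument is to compute the primitive root of $w$ in two ways. Using primitive stability of $x$, we get $\plog_G(w) = \plog_G(x^m) = m \cdot \plog_G(x) = ma$; on the other hand $r_x^{ma} = (r_x^a)^m = x^m = w$. Thus $r_x$ is a root of $w$ whose exponent $ma$ equals $\plog_G(w)$, which is exactly the statement that $r_x$ is \emph{the} primitive root of $w$, i.e. $\sqrt[G]{w} = \{r_x\}$. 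Applying primitive stability of $y$ in the identical fashion gives $\sqrt[G]{w} = \{r_y\}$.

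Comparing these two expressions and invoking uniqueness of primitive roots yields $r_x = r_y =: r$. Consequently $x = r^a$ and $y = r^b$ both lie in $\langle r \rangle$, which proves the first assertion, while $\sqrt[G]{x} = \{r_x\} = \{r\} = \{r_y\} = \sqrt[G]{y}$ gives the ``in particular'' clause. I do not anticipate a genuine obstacle: the proof is a direct unwinding of the definition of $\Ups$, the only mild subtlety being to organise the two-sided computation of $\plog_G(w)$ so that primitive stability identifies a common exponent and the Unique Root property then collapses the two candidate primitive roots into a single element $r$.
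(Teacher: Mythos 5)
Your proposal is correct and follows essentially the same route as the paper: primitive stability gives $\plog_G(x^m)=m\,\plog_G(x)$ and $\plog_G(y^n)=n\,\plog_G(y)$, equality of these forces $r_x^{k}=r_y^{k}$ for the common exponent $k$, and the Unique Root property collapses $r_x=r_y$. The only cosmetic difference is that you route the conclusion through ``uniqueness of the primitive root of $w$'' while the paper applies unique roots directly to $r_x^k=r_y^k$; these are the same step.
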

    \begin{proof}
        Let $r_x = \sqrt[G]{x}$ and $k_x = \plog_G(x)$, so that $r_x^{k_x}=x$, and similarly $r_y = \sqrt[G]{y}$, $k_y = \plog_G(y)$. From primitive stability we obtain that $\plog_G(x^m) = m \cdot\plog_G(x) = m k_x$, and analogously that $\plog_G(y^n) = n k_y$. 
        
        As $x^m = y^n$, we have that $\plog_G(x^m) = \plog_G(y^n)$, that is $m k_x = n k_y$, which we denote by $k$. Therefore $r_x^{k} = r_y^{k}$, and we conclude that $r_x = r_y$, as $G\in \U$.
        Thus $x,y\in\langle r \rangle $, where $r$ denotes  $r_x = r_y$.
    \end{proof}

    \begin{lemma}
        \label{lemma:characterisation_of_p-isolation}
        Let $G\in\Ups$ and $g \in G$ be arbitrary. The subgroup $\langle g \rangle$ is $p$-isolated in $G$ if and only if $\plog_G(g)$ is a power of $p$. Consequently, the subgroup $\langle g \rangle$ is isolated in $G$ if and only if $\plog_G(g) = 1$ or equivalently, if $\langle g \rangle$ is a maximal cyclic subroup of $G$.
    \end{lemma}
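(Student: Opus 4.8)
The plan is to reduce the entire statement to elementary divisibility arithmetic inside the cyclic group generated by the primitive root of $g$. Since $G \in \Ups$, every non-trivial element is primitively stable and so has a primitive root, and the Unique Root property guarantees it is a single element; write $r = \sqrt[G]{g}$ and $k = \plog_G(g)$, so that $g = r^k$ and $\langle g \rangle = \langle r^k \rangle \leq \langle r \rangle$. Note that $r$ has infinite order (as $G$ is torsion-free) and that $r$ is its own primitive root: if $r = s^j$ with $j \geq 2$ then $g = s^{jk}$ would contradict the maximality of $k$, so $\sqrt[G]{r} = \{r\}$. For the forward implication I would argue by contraposition. If $k$ is not a power of $p$, choose a prime $q \neq p$ dividing $k$ and set $f = r^{k/q}$. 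Then $f^q = r^k = g \in \langle g \rangle$, while $f \notin \langle g \rangle$, because $r^{k/q} \in \langle r^k \rangle$ would force $k \mid (k/q)$, which is impossible as $1 \leq k/q < k$. This contradicts $p$-isolation of $\langle g \rangle$, so $p$-isolation forces $k$ to be a power of $p$.

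For the converse, assume $k = p^a$. By Lemma \ref{lemma:simplified_p-isolation} it suffices to verify, for every $n \in \mathbb{N}$ coprime to $p$ and every $f \in G$, that $f^n \in \langle g \rangle$ implies $f \in \langle g \rangle$. Writing $f^n = r^{p^a t}$ for some $t \in \mathbb{Z}$, the case $t = 0$ gives $f^n = e$, hence $f = e$ by torsion-freeness, so I may assume $t \neq 0$ (whence $f \neq e$ and $\sqrt[G]{f}$ is defined). The decisive step is to promote the identity ``$f^n$ is a power of $r$'' to the membership $f \in \langle r \rangle$: replacing $f$ by $f^{-1}$ when $t < 0$, I may take $t > 0$, so that $f^n = r^{p^a t}$ is an equality of two positive powers, and Lemma \ref{lemma:common_root} yields $\sqrt[G]{f} = \sqrt[G]{r} = \{r\}$, hence $f = r^c$ for some $c \in \mathbb{Z}$. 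Then $r^{cn} = f^n = r^{p^a t}$ gives $cn = p^a t$ by infinite order of $r$, and since $\gcd(n, p^a) = 1$ I conclude $p^a \mid c$, i.e. $f = (r^{p^a})^{c/p^a} = g^{c/p^a} \in \langle g \rangle$, as required.

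For the final assertion, $\langle g \rangle$ is isolated exactly when it is $p$-isolated for every prime $p$, i.e. when $k = \plog_G(g)$ is simultaneously a power of every prime; the only integer with this property is $k = 1$. Finally, $\plog_G(g) = 1$ says precisely that $g = r$ is its own primitive root, and this is equivalent to $\langle g \rangle$ being maximal cyclic: if $\langle g \rangle \subsetneq \langle h \rangle$ then $g = h^j$ with $|j| \geq 2$, forcing $\plog_G(g) \geq 2$, whereas conversely $\plog_G(g) = k \geq 2$ produces the proper inclusion $\langle g \rangle \subsetneq \langle r \rangle$. I expect the main obstacle to be the backward direction, and specifically the upgrade from ``$f^n \in \langle r \rangle$'' to ``$f \in \langle r \rangle$'': this is exactly where membership in $\Ups$ is indispensable, via Lemma \ref{lemma:common_root} together with unique roots, and it requires the minor but necessary sign bookkeeping for $t$. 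Once $f = r^c$ is secured, the remainder is the routine coprimality computation.
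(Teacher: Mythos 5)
Your proof is correct and follows essentially the same route as the paper's: the forward direction exhibits a proper root of $g$ inside $\langle \sqrt[G]{g}\rangle$ violating isolation, and the backward direction uses Lemma \ref{lemma:common_root} to force $f$ into $\langle \sqrt[G]{g}\rangle$ and then finishes with the coprimality computation. Your version is in fact slightly more careful than the paper's on the sign of the exponent and on reducing to the coprime-$n$ criterion of Lemma \ref{lemma:simplified_p-isolation}, but these are cosmetic differences.
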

    \begin{proof}
        Suppose that $\plog_G(g)$ is not a power of $p$, i.e. $\plog_G(g) = m p^e$ for some $m$ coprime with $p$. For $r = \sqrt[G]{g}$, we have that $r^{p^e} \not\in \langle g \rangle$, but $(r^{p^e})^m = g \in \langle g \rangle$. Hence $\langle g \rangle$ is not $p$-isolated.
        
        Assume now that $\plog_G(g) = p^e$. Let $f \in G$ and suppose that $f^q \in \langle g \rangle$ for some prime $q$ distinct from $p$, so that $f^q = g^k$ for some $k \in \mathbb{Z}$. By Lemma \ref{lemma:common_root} we see that $\sqrt[G]{f} = \sqrt[G]{g} = \{r\}$. 
        
        Set $n = \plog_G(f)$, so that $r^n = f$. We have that 
        \[r^{k p^e} =g^k=f^q= r^{n q},\] 
        and hence $k p^e = n q$. As $q$ is a prime distinct from $p$, it must divide $k$, thus $n = k/q \cdot p^e$ with $k/q$ a natural number. 
        This means that
            \begin{displaymath}
                f = r^n = \left( r^{p^e}\right)^{k/q} = g^{k/q} \in \langle g \rangle,
            \end{displaymath}
        and therefore we proved that $\langle g \rangle$ is $p$-isolated in $G$.
        
        Applying what we just proved, the subgroup $\langle g\rangle$ is separable, that is $p$-separable for all prime numbers $p$, if and only if $\plog_G(g)=1$.
        In view of Lemma \ref{lemma:simplified_p-isolation}, $p$-separability for all primes $p$ is equivalent to the property that if $h^n\in\langle g\rangle$ for some $n\in\mathbb{Z}$, then $h\in\langle g\rangle$ already, and this is equivalent to $\langle g\rangle$ being maximal cyclic in $G$.
    \end{proof}
    Notice that, in the previous lemma, we used the fact that $G$ had the Unique Root property just for one implication.

    Primitive roots and primitive logarithms do not necessarily behave in a stable manner with respect to subgroups. Consider an infinite cyclic group $G = \langle g \rangle$ and let $K = \langle g^k \rangle \leq G$, for some $k \geq 2$. Then $\sqrt[K]{g^{kl}} = g^k$ and $\plog_K(g^{kl}) = l$, whereas $\sqrt[G]{g^{kl}} = g$ and $\plog_G(g^{kl}) = kl$.
    \begin{lemma}
        \label{lemma:primitive_stability_subgroups}
        Let $G\in\Ups$ and $H \leq G$. For every $h \in H$ we have that
            \[\sqrt[H]{h} = (\sqrt[G]{h})^a,\qquad \plog_H(h) = \frac{\plog_G(h)}{a},\]
        where $a = \lvert \langle \sqrt[G]{h}\rangle \colon (\langle \sqrt[G]{h}\rangle \cap H)\rvert$.
    
       Furthermore $H$ is primitively stable, and therefore the class $\Ups$ is closed under taking subgroups.
    \end{lemma}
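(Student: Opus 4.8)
The plan is to reduce every computation to the infinite cyclic subgroup generated by the primitive root of $h$ in $G$. Write $r = \sqrt[G]{h}$ and $k = \plog_G(h)$, so that $r^k = h$; since $G$ is torsion-free, $\langle r\rangle$ is infinite cyclic. First I would record that $r$ is its own primitive root in $G$, i.e.\ $\plog_G(r) = 1$: if we had $r = t^j$ with $j \geq 2$, then $h = t^{jk}$ would contradict the maximality of $k$. Next, since $h = r^k$ lies in $H$, the intersection $\langle r\rangle \cap H$ is a non-trivial subgroup of $\langle r\rangle \cong \mathbb{Z}$, hence equals $\langle r^a\rangle$ where $a = \lvert\langle r\rangle : \langle r\rangle \cap H\rvert$ is finite; moreover $r^k \in \langle r^a\rangle$ forces $a \mid k$. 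Thus $r^a \in H$ and $(r^a)^{k/a} = h$, which already exhibits $r^a$ as a root of $h$ in $H$ realising the exponent $k/a$.

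The crux is to show that $r^a$ is the \emph{primitive} root of $h$ in $H$, that is, that $k/a$ is the maximal exponent. So suppose $s \in H$ satisfies $s^m = h$ for some $m \geq 1$. Then $s^m = r^k$, and Lemma \ref{lemma:common_root} (applied in $G \in \Ups$) yields $\sqrt[G]{s} = \sqrt[G]{r} = r$, using $\plog_G(r) = 1$ from the previous paragraph. Hence $s \in \langle r\rangle$, and being also in $H$ we get $s \in \langle r\rangle \cap H = \langle r^a\rangle$. Writing $s = r^{ab}$ and comparing with $s^m = r^k$ inside the infinite cyclic group $\langle r\rangle$ gives $abm = k$, so $bm = k/a$ and in particular $m \leq k/a$. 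Therefore $k/a$ is maximal, giving $\plog_H(h) = k/a$; uniqueness of the root follows because $H$ inherits the Unique Root property from $G$ by Lemma \ref{lemma:urp_direct}, so $\sqrt[H]{h} = r^a = (\sqrt[G]{h})^a$, as claimed.

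Finally, for primitive stability of $H$ I would feed the powers $h^n$ into the formulas just established. Since $G$ is primitively stable we have $\plog_G(h^n) = nk$ and $\sqrt[G]{h^n} = r$, so the index computed for $h^n$ is the same $a = \lvert\langle r\rangle : \langle r\rangle \cap H\rvert$, and therefore
\[
\plog_H(h^n) = \frac{\plog_G(h^n)}{a} = \frac{nk}{a} = n \cdot \frac{k}{a} = n\,\plog_H(h),
\]
which is exactly primitive stability of $h$ in $H$. As this holds for every non-trivial $h \in H$, the subgroup $H$ is primitively stable, and combined with the Unique Root property inherited via Lemma \ref{lemma:urp_direct} we conclude $H \in \Ups$, so $\Ups$ is closed under subgroups. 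The main obstacle is the middle step, namely controlling an arbitrary $H$-root $s$ of $h$: the key insight is that Lemma \ref{lemma:common_root} forces $s$ back into the single infinite cyclic group $\langle r\rangle$, after which the whole argument collapses to elementary divisibility in $\mathbb{Z}$.
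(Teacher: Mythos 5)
Your proof is correct and follows essentially the same route as the paper's: both use Lemma \ref{lemma:common_root} to force any $H$-root of $h$ into $\langle \sqrt[G]{h}\rangle$, hence into $\langle (\sqrt[G]{h})^a\rangle$, reducing everything to divisibility in an infinite cyclic group, and both then deduce primitive stability of $H$ by observing that the index $a$ is unchanged when $h$ is replaced by $h^n$. Your explicit verification that $\plog_G(r)=1$ (needed to identify $\sqrt[G]{r}$ with $r$ before invoking Lemma \ref{lemma:common_root}) is a small detail the paper leaves implicit.
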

    \begin{proof}
        Let $r = \sqrt[G]{h}$. By Lemma \ref{lemma:common_root} we see that $a$ divides $\plog_G(h)$, denote $\plog_G(h) = ak$.
        
        Suppose that there is $\tilde r \in H$ such that $\tilde r^{s} = h$ for some $s \in \mathbb{N}$. Using Lemma \ref{lemma:common_root} we see that $\sqrt[G]{\tilde r} =r= \sqrt[G]{r}$, hence $\tilde r \in \langle r\rangle \cap H$. As $a = \min\left\{n \in \mathbb{N} \mid r^n \in H\right\}$, it follows that $\tilde r \in \langle r^a\rangle$. We see that $k = \plog_H(h)$. By unique roots, it follows that $r^a = \sqrt[H]{h}$.
        
        Now consider $h^n$ for some $n \in \mathbb{N}$. By primitive stability in $G$ we see that
        \begin{displaymath}
            \left|\left\langle \sqrt[G]{h^n} \right\rangle \colon \left\langle \sqrt[G]{h^n}\right\rangle \cap H\right| = \left|\left\langle \sqrt[G]{h} \right\rangle \colon \left\langle \sqrt[G]{h}\right\rangle \cap H\right|=a.
        \end{displaymath}
        By the first part of the statement $\plog_H(h^n) = \plog_G(h^n)/a$. By primitive stability in $G$ we see that
        \begin{displaymath}
            \plog_H(h^n) = \frac{\plog_G(h^n)}{a} = n \frac{\plog_G(h)}{a} = n \plog_H(h),
        \end{displaymath}
        hence $H$ is primitively stable.
    \end{proof}

    However, primitive roots and primitive logarithms are stable with respect to retracts.
    \begin{lemma}
    \label{lemma:primitive_roots_retract}
        Let $G\in\U$, suppose that $R \leq G$ is a retract such that $R$ is primitively stable, and let $\rho \colon G \to R$ denote the corresponding retraction. Then $\plog_G(g)$ is defined for all $g \in G$ with $\rho(g) \neq e$ and it divides $\plog_R(\rho(g))$. Moreover, if $g \in R$, then $\sqrt[G]{g} = \sqrt[R]{g}$ and $\plog_R(g) = \plog_G(g)$.  
    \end{lemma}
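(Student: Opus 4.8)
The plan is to exploit the fact that a retraction maps roots to roots: if $r^n = g$ in $G$, then applying $\rho$ gives $\rho(r)^n = \rho(g)$ in $R$. Since $R$ is a subgroup of $G \in \U$, it has unique roots by Lemma \ref{lemma:urp_direct}, and being primitively stable by hypothesis it lies in $\Ups$. Thus every non-trivial element of $R$ has a well-defined primitive logarithm, which caps the order of its roots; this cap is what I would transport back to bound the orders of roots of $g$ in $G$. Throughout I would silently use that $R \in \Ups$ is torsion-free, so its non-trivial elements have infinite order.

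First I would establish that $\plog_G(g)$ is defined whenever $\rho(g) \neq e$. Suppose $r^n = g$ for some $r \in G$ and $n \in \mathbb{N}$. Applying $\rho$ yields $\rho(r)^n = \rho(g) \neq e$, so $\rho(r) \neq e$ and $\rho(r)$ is a root of $\rho(g)$ in $R$ of order $n$; by the definition of primitive logarithm, $n \leq \plog_R(\rho(g))$. Hence the (non-empty) set of orders of roots of $g$ in $G$ is bounded above by $\plog_R(\rho(g))$, so it attains a maximum, which is by definition $\plog_G(g)$.

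For the divisibility claim I would take $m = \plog_G(g)$ and a primitive root $r \in \sqrt[G]{g}$ with $g = r^m$. Then $\rho(g) = \rho(r)^m$ with $\rho(r) \neq e$, and since $\rho(r)$ is primitively stable in $R$, Definition \ref{definition:primitive_stability} gives $\plog_R(\rho(g)) = \plog_R(\rho(r)^m) = m \cdot \plog_R(\rho(r))$, so $\plog_G(g) = m$ divides $\plog_R(\rho(g))$. Finally, when $g \in R \setminus \{e\}$ we have $\rho(g) = g$, so the previous step gives $\plog_G(g) \mid \plog_R(g)$, in particular $\plog_G(g) \leq \plog_R(g)$. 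Conversely, the primitive root $r_R = \sqrt[R]{g}$ lies in $R \subseteq G$ and satisfies $r_R^{\plog_R(g)} = g$, exhibiting a root of $g$ in $G$ of order $\plog_R(g)$, whence $\plog_R(g) \leq \plog_G(g)$. Combining the two inequalities gives $\plog_G(g) = \plog_R(g)$; then $r_R$ realises the maximal exponent $\plog_G(g)$, so it is a primitive root of $g$ in $G$, and uniqueness of roots in $G \in \U$ forces $\sqrt[G]{g} = \{r_R\} = \sqrt[R]{g}$.

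The conceptual crux is the first step. Here $G$ is assumed only to have unique roots, not to be primitively stable, so a priori $g$ could admit roots of unbounded order, exactly as the element $a$ in $\BS(1,2)$ after Definition \ref{definition:plogs} illustrates. The whole argument hinges on pushing the question through $\rho$ into the primitively stable group $R$, where the primitive logarithm of $\rho(g)$ supplies the missing upper bound; once existence of $\plog_G(g)$ is secured, the remaining parts are routine bookkeeping with the Unique Root property.
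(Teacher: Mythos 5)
Your proposal is correct and follows essentially the same route as the paper: push any root $r$ of $g$ through $\rho$ and use primitive stability of $R$ to control the exponent, which yields existence of $\plog_G(g)$ and the divisibility claim simultaneously (the paper gets both in one stroke by noting every root exponent $k$ satisfies $\plog_R(\rho(g)) = k\cdot\plog_R(\rho(r))$, where you split this into a bound followed by divisibility for the maximal exponent). The only cosmetic difference is in the $g \in R$ case, where the paper applies the Unique Root property to conclude $\rho(r)=r$ directly, while you run an inequality sandwich; both are valid.
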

    \begin{proof}
        Suppose that $g = r^k$ for some $r \in G$ and $k \in \mathbb{N}$. Then $\rho(r)^k = \rho(g)$. Let $\tilde r = \sqrt[R]{\rho(g)}$ and $\tilde k = \plog_R(\rho(g))$. We see that 
        \[{\tilde r}^{\tilde k} = \rho(g) = \rho(r)^k,\] 
        and therefore by primitive stability we obtain that
        \[\plog_R(\rho(g))=\plog_R(\rho(r)^k)=k\cdot\plog_R(\rho(r)).\]
        It follows that $k$ divides $\plog_R(\rho(g))$, hence $\plog_G(g)$ is defined and we see that it has to divide $\plog_R(\rho(g))$.
        
        Suppose that $g\in R$, so that $\rho(g) = g$. Then $\rho(r)^k = \rho(g) = g = r^k$ and by the Unique Root property we see that $\rho(r) = r$, i.e. $\sqrt[G]{g} \in R$. By Lemma~\ref{lemma:common_root} we see that $ r=\sqrt[R]{g}$ and $k = \plog_R(g)$.
    \end{proof}
    Let us note here that the assumption on non-triviality of $\rho(g)$ is essential. One can check that the Bauslag-Solitar group 
    \begin{displaymath}
       \BS(1,p) = \langle a, t \parallel tat^{-1} = a^p \rangle = (\mathbb{Z}[1/p], +) \rtimes \langle t \rangle
    \end{displaymath} 
    has Unique Root property and the subgroup $\langle t \rangle$ is a retract with canonical retraction $\rho \colon \BS(1,p) \to \langle t \rangle$. Nevertheless $\rho(a) = e$, and $\plog_{\BS(1, p)}(a)$ is not defined.
    
\begin{definition}[{\bf $p$-inseparability}]    
Let $G$ be a group and $f,g \in G$. We say that the pair $(f, \langle g \rangle)$ is $p$-\emph{inseparable} if $\pi_N(f) \in \langle \pi_N(g) \rangle$ for every $N \in \Np(G)$, where $\pi_N\colon G\to G/N$ is the canonical projection.
\end{definition}
    
    The following lemma shows that for groups in $\Ups$ which are $p$-CSS, the only inseparable pairs arise from the counterexamples to $p$-isolation (see Example \ref{example:p_inseparable_pair}).
    \begin{lemma}
    	\label{lemma:p_inseparable_pair}
        Let $G\in\Ups$ be a $p$-CSS group and let $f,g \in G$ be such that $f \notin \langle g\rangle$ and the pair $(f, \langle g \rangle)$ is $p$-inseparable. Then $\sqrt[G]{f} = \sqrt[G]{g}$ and $f^k \in \langle g \rangle$ for some $k \in \mathbb{Z}$ coprime to $p$. 
    \end{lemma}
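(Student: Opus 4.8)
The plan is to enlarge $\langle g\rangle$ to its $p$-isolated hull inside the maximal cyclic subgroup carrying $g$, observe that this hull is $p$-separable by hypothesis, and then trap $f$ inside it. First note that $g\neq e$: otherwise $\langle g\rangle=\{e\}$ is $p$-isolated (as $G\in\Ups$ is torsion-free) and hence $p$-separable, so $p$-inseparability of the pair would force $f\in\{e\}=\langle g\rangle$, contrary to hypothesis. Since $G$ is torsion-free we may set $r=\sqrt[G]{g}$ and write $\plog_G(g)=p^e m'$ with $\gcd(m',p)=1$, so that $g=r^{p^e m'}$. Because $r$ is a primitive root it admits no proper roots, whence $\plog_G(r)=1$.

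The first key step is to record that $\langle r^{p^e}\rangle$ is $p$-separable. By primitive stability $\plog_G(r^{p^e})=p^e\cdot\plog_G(r)=p^e$, so Lemma~\ref{lemma:characterisation_of_p-isolation} shows that $\langle r^{p^e}\rangle$ is $p$-isolated in $G$; as $G$ is $p$-CSS, it is therefore $p$-separable, i.e.\ $p$-closed in $G$.

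Next I would trap $f$ in $\langle r^{p^e}\rangle$. Since $g=(r^{p^e})^{m'}$ we have $\langle g\rangle\leq\langle r^{p^e}\rangle$, hence for every $N\in\Np(G)$ the $p$-inseparability of $(f,\langle g\rangle)$ gives $\pi_N(f)\in\langle\pi_N(g)\rangle\subseteq\pi_N(\langle r^{p^e}\rangle)$, that is $f\in\langle r^{p^e}\rangle N$. As $\langle r^{p^e}\rangle$ is $p$-closed, letting $N$ range over $\Np(G)$ forces $f\in\langle r^{p^e}\rangle$. Thus $f=r^{p^e t}$ for some $t\in\mathbb{Z}$, and $t\neq 0$ since $f\neq e$. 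I expect this step to be the main obstacle: the whole argument hinges on identifying $\langle r^{p^e}\rangle$ as the correct $p$-separable over-group of $\langle g\rangle$, since $\langle g\rangle$ itself is typically not $p$-closed when $p\nmid m'$, and only the passage to the isolated hull lets us invoke $p$-CSS.

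Finally I would extract $k$ and the common root by elementary arithmetic inside $\langle r\rangle\cong\mathbb{Z}$. Put $d=\gcd(t,m')$ and write $m'=d\mu$; then $\mu$ divides $m'$, so $\gcd(\mu,p)=1$, and $f^{\mu}=r^{p^e t\mu}=(r^{p^e m'})^{t/d}=g^{t/d}\in\langle g\rangle$, which furnishes the desired exponent $k=\mu$ coprime to $p$. Feeding the relation $f^{\mu}=g^{t/d}$ into Lemma~\ref{lemma:common_root} then produces a common primitive root, giving $\sqrt[G]{f}=\sqrt[G]{g}$. The only remaining subtlety is the sign of $t$, i.e.\ whether $f$ and $g$ are oriented the same way along $\langle r\rangle$; one applies Lemma~\ref{lemma:common_root} to the correctly oriented relation, which at worst replaces $\sqrt[G]{g}$ by its inverse, both generating the same maximal cyclic subgroup $\langle r\rangle$.
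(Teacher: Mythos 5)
Your proof is correct and follows essentially the same route as the paper: both pass to the $p$-isolated hull $\langle r^{p^e}\rangle$ (the paper's $g'=r^{p^e}$), use $p$-CSS to see it is $p$-closed, trap $f$ inside it via the $p$-inseparability of the pair, and then conclude with Lemma~\ref{lemma:common_root}. The only differences are cosmetic (the paper takes $k$ directly from $\plog_G(g)=kp^e$ rather than via a gcd, and does not comment on the sign/orientation of the common root, a point you rightly flag but which affects the paper's own argument equally).
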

    \begin{proof}
        As $G$ is $p$-CSS, we see that $g$ cannot be $p$-isolated. Set $r = \sqrt[G]{g}$. As $g$ is not $p$-isolated, from Lemma \ref{lemma:characterisation_of_p-isolation} we see that $\plog_G(g)= k p^{e}$, where $p$ does not divide $k>1$. 
        
        Set $g' = r^{p^{e}}$. Notice that $\plog_G(g') = p^{e}$, hence $g'$ is $p$-isolated in $G$ by Lemma \ref{lemma:characterisation_of_p-isolation} and, in particular, $\langle g' \rangle$ is $p$-closed in $G$. 
        
        If $f \not\in \langle g' \rangle$, then there is $N \in \Np$ such that $\pi_N(f) \not \in \langle \pi_N(g') \rangle$. As $\langle g \rangle \leq \langle g' \rangle$, it would follow that $\pi_N(f) \not\in \langle \pi_N(g) \rangle$, contradicting the fact that $(f, \langle g\rangle)$ is $p$-inseparable. 
        
        Therefore $f \in \langle g' \rangle$ and $f^k \in \langle g \rangle$. Moreover, by Lemma \ref{lemma:common_root} we conclude that  $\sqrt[G]{f} = \sqrt[G]{g}$.
    \end{proof}
    
\section{Primitive stability in graph products}
\label{section:primitive_stabilility_closure}
The following lemma is a direct consequence of Lemma \ref{lemma:primitive_roots_retract} together with Definition \ref{definition:plogs} and Definition \ref{definition:primitive_stability}.
\begin{lemma}
    \label{lemma:plog_direct}
        Let $G_1, G_2\in \Ups$ and $g = (g_1, g_2) \in G_1 \times G_2 = G$ be arbitrary. Then
        \begin{displaymath}
            \plog_{G}(g) = \gcd\left(\plog_{G_1}(g_1), \plog_{G_2}(g_2)\right).
        \end{displaymath}
        and
        \begin{equation}\label{eq:plog_direct2}
            \sqrt[G]{g} = \left(\sqrt[G_1]{g_1}^{\frac{\plog_{G_1}(g_1)}{\plog_{G}(g)}}, \sqrt[G_2]{g_2}^{\frac{\plog_{G_2}(g_2)}{\plog_{G}(g)}}\right).
        \end{equation}
    \end{lemma}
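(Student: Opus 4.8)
The plan is to treat $G_1$ and $G_2$ as retracts of $G = G_1\times G_2$ via the two coordinate projections $\rho_i\colon G\to G_i$, identifying each $G_i$ with the corresponding direct factor, and to extract all the quantitative information from Lemma \ref{lemma:primitive_roots_retract}. Throughout I write $r_i=\sqrt[G_i]{g_i}$ and $k_i=\plog_{G_i}(g_i)$ for $i=1,2$, and $d=\gcd(k_1,k_2)$. Note that $G\in\U$ by Lemma \ref{lemma:urp_direct}, and each $G_i$ is primitively stable by hypothesis, so the assumptions of Lemma \ref{lemma:primitive_roots_retract} are met for both retracts. I first assume $g_1\neq e$ and $g_2\neq e$, i.e. $\rho_i(g)\neq e$; the degenerate cases are dealt with at the end.

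First I would establish the divisibility $\plog_G(g)\mid d$. Since $\rho_i(g)=g_i\neq e$, Lemma \ref{lemma:primitive_roots_retract} guarantees that $\plog_G(g)$ is defined and that $\plog_G(g)\mid\plog_{G_i}(g_i)=k_i$ for $i=1,2$; hence $\plog_G(g)$ divides $\gcd(k_1,k_2)=d$. For the reverse divisibility I would exhibit an explicit $d$-th root of $g$. Because $d\mid k_1$ and $d\mid k_2$, the element
\[
s=\left(r_1^{k_1/d},\,r_2^{k_2/d}\right)\in G
\]
satisfies $s^d=(r_1^{k_1},r_2^{k_2})=(g_1,g_2)=g$. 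Thus $d$ occurs as the exponent of a root of $g$, so $d\le\plog_G(g)$ by maximality of the primitive logarithm. Combined with $\plog_G(g)\mid d$ this forces $\plog_G(g)=d$, which is the first claimed formula.

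It then remains to identify the primitive root. The element $s$ is a root of $g$ whose exponent $d$ equals $\plog_G(g)$, so $s$ is by definition a primitive root of $g$; since $G\in\U$ the primitive root is unique, whence $\sqrt[G]{g}=s$. Rewriting $k_i/d$ as $\plog_{G_i}(g_i)/\plog_G(g)$ gives exactly equation \eqref{eq:plog_direct2}.

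Finally, the only point genuinely requiring care is not the algebra but the degenerate situation in which one coordinate is trivial, where $\plog_{G_i}(e)$ and $\sqrt[G_i]{e}$ are undefined. If, say, $g_2=e$ and $g_1\neq e$, then $g$ lies in the retract $G_1\times\{e\}$, and the ``moreover'' part of Lemma \ref{lemma:primitive_roots_retract} yields $\plog_G(g)=\plog_{G_1}(g_1)$ together with $\sqrt[G]{g}=\sqrt[G_1]{g_1}$; under the conventions $\plog(e)=0$ and $\sqrt{e}=e$ (so that $\gcd(k_1,0)=k_1$) both displayed formulas remain valid. I expect this bookkeeping of the trivial-coordinate case, rather than the generic computation, to be the only subtle point, which is why the paper can fairly call the statement a direct consequence of Lemma \ref{lemma:primitive_roots_retract}.
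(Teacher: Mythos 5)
Your proof is correct and follows essentially the same route as the paper's: both deduce $\plog_G(g)\mid\gcd(k_1,k_2)$ from Lemma \ref{lemma:primitive_roots_retract} applied to the two coordinate retractions, and both obtain the reverse inequality from the maximality in the definition of the primitive logarithm. You are in fact somewhat more explicit than the paper, which leaves the root $s=\bigl(r_1^{k_1/d},r_2^{k_2/d}\bigr)$ and the treatment of a trivial coordinate implicit.
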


    \begin{proof}
The groups $G_1$ and $G_2$ are both retracts of the direct product $G_1\times G_2$. Therefore, from Lemma \ref{lemma:primitive_roots_retract} it follows that 
$\plog_{G}(g)$ divides both $\plog_{G_1}(g_1)$ and $\plog_{G_2}(g_2)$, and hence their greatest common divisor. As the primitive logarithm is defined to be the exponent of the primitive root, it must be that $\plog_{G}(g)$ is that greatest common divisor.

The equality of equation \eqref{eq:plog_direct2} follows from the previous argument.

    \end{proof}

For the next fact, let us remember that if $G$ is a free abelian group freely generated by $a_1, \dots, a_n$, then any element $g \in G$ can be written as $g = (a_1^{k_1}, \dots, a_n^{k_n})$ for uniquely given $k_1, \dots, k_n \in \mathbb{Z}$.


    \begin{remark}
    Notice that primitive stability is an essential hypothesis for Lemma \ref{lemma:plog_direct}. Indeed, consider the groups
    \[G_n:=\langle g_n,r_{n,2},\dots, r_{n,n}\parallel g_n=r_{n,i}^i\ \forall i=2,\dots,n\rangle.\]
    for $n\geqslant 2$.
    These groups are not primitively stable, and $G_n\times G_{n-1}$ does not satisfy the conclusion of Lemma \ref{lemma:plog_direct}, because
    \[\plog_{G_n\times G_{n-1}}(g_n,g_{n-1})=n-1\]
    does not divide the great common divisor of $n-1$ and $n$.
    \end{remark}
    
\begin{lemma}
    \label{lemma:primitive_urp_direct}
	The class $\Ups$ is closed under direct products.
\end{lemma}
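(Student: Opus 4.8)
The plan is to prove the statement for a product of two factors $G = G_1 \times G_2$; closure under arbitrary finite direct products then follows by induction, exactly as in the proof of Lemma \ref{lemma:urp_direct}. Since $\Ups$ is by definition the class of primitively stable groups that also enjoy the Unique Root property, and Lemma \ref{lemma:urp_direct} already provides that $G \in \U$, the only point left to verify is that $G$ is primitively stable in the sense of Definition \ref{definition:primitive_stability}: every $g \in G \setminus \{e\}$ must admit a primitive root and satisfy $\plog_G(g^n) = n \cdot \plog_G(g)$ for all $n \in \mathbb{N}$.

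I would fix $g = (g_1, g_2) \in G \setminus \{e\}$. Since $G_1$ and $G_2$ are primitively stable retracts of $G$ and at least one of $g_1, g_2$ is non-trivial, Lemma \ref{lemma:primitive_roots_retract} guarantees that $\plog_G(g)$ is defined, so that $\sqrt[G]{g} \neq \emptyset$; the same applies to $g^n$, recalling that primitively stable groups are torsion-free, so that $g_i^n \neq e$ whenever $g_i \neq e$. It therefore remains to establish the multiplicativity relation.

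First I would treat the generic case where both coordinates $g_1, g_2$ are non-trivial. Here Lemma \ref{lemma:plog_direct} applies to both $g$ and $g^n = (g_1^n, g_2^n)$, yielding
\[
\plog_G(g) = \gcd\bigl(\plog_{G_1}(g_1), \plog_{G_2}(g_2)\bigr), \qquad \plog_G(g^n) = \gcd\bigl(\plog_{G_1}(g_1^n), \plog_{G_2}(g_2^n)\bigr).
\]
Using primitive stability of $G_1$ and $G_2$ I would substitute $\plog_{G_i}(g_i^n) = n \cdot \plog_{G_i}(g_i)$ and then pull the factor $n$ out of the greatest common divisor, obtaining $\plog_G(g^n) = n \cdot \gcd\bigl(\plog_{G_1}(g_1), \plog_{G_2}(g_2)\bigr) = n \cdot \plog_G(g)$, as required.

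The remaining degenerate case, which is the one point needing separate care, is when one coordinate vanishes, say $g = (g_1, e)$ with $g_1 \neq e$: then the gcd formula of Lemma \ref{lemma:plog_direct} is not literally available, since $\plog_{G_2}(e)$ is undefined. In this situation $g$ lies in the retract $R = G_1 \times \{e\} \cong G_1$, which is primitively stable, and so does $g^n$. Applying the final part of Lemma \ref{lemma:primitive_roots_retract} to the retraction of $G$ onto $R$ gives $\plog_G(g) = \plog_R(g)$ and $\plog_G(g^n) = \plog_R(g^n)$, whence $\plog_G(g^n) = n \cdot \plog_R(g) = n \cdot \plog_G(g)$ by primitive stability of $R$. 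This exhausts all cases and shows $G \in \Ups$. The main obstacle is precisely this bookkeeping of trivial coordinates: the clean gcd formula silently presupposes that both primitive logarithms exist, so the degenerate case must be routed through the retract lemma rather than through Lemma \ref{lemma:plog_direct}.
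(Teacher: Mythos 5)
Your proof is correct and follows essentially the same route as the paper: Lemma \ref{lemma:urp_direct} for the Unique Root property, then the $\gcd$ formula of Lemma \ref{lemma:plog_direct} combined with primitive stability of the factors to pull out the factor $n$. The only difference is that you explicitly handle the degenerate case of a trivial coordinate via Lemma \ref{lemma:primitive_roots_retract}, a point the paper's proof passes over silently; this is a legitimate (if minor) extra care, since $\plog_{G_i}(e)$ is undefined and the $\gcd$ formula does tacitly assume both coordinates are non-trivial.
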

\begin{proof}
	Let $G_1, G_2\in\Ups$, consider $G = G_1 \times G_2$ and let $n \in \mathbb{N}$, $g = (g_1, g_2) \in G$ be arbitrary. By Lemma \ref{lemma:urp_direct} we see that $G$ has Unique Root property, that is $G\in\U$. From Lemma \ref{lemma:plog_direct}, and exploiting primitive stability for the second equality, we get that
	\begin{align*}
	    \plog_G(g^n)    &= \gcd \left(\plog_{G_1}(g_1^n), \plog_{G_2}(g_2^n)\right)\\
	                    &= \gcd \left(n \plog_{G_1}(g_1), n\plog_{G_2}(g_2)\right)\\
	                    &=n \gcd \left(\plog_{G_1}(g_1), \plog_{G_2}(g_2)\right) = n \plog_G(g).
	\end{align*}
	Therefore $G$ is also primitively stable.
\end{proof}

\begin{lemma}
    \label{lemma:amalgam_common_root}
    Let $G_1 = K_1\rtimes R$, $G_2 = K_2 \rtimes R$ be groups in $\U$ and suppose that $R$ is primitively stable. Let $x,y \in G = G_1 \ast_R G_2$ be cyclically reduced elements with $\lvert x\rvert_*, \lvert y\rvert_* > 1$, and suppose that $x^m = y^n$ for some $m,n \in \mathbb{N}$. Then there are $z \in G$ and $a, b \in \mathbb{N}$ such that $z^a= x$ and $z^b = y$.
\end{lemma}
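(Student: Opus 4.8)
The plan is to bypass the combinatorics of reduced words entirely and instead exploit that the ambient group has unique roots. First I would record that $G = G_1 \ast_R G_2 \cong (K_1 \ast K_2)\rtimes R$ is an amalgam over the common retract $R$ of the groups $G_1, G_2 \in \U$, so Proposition \ref{lemma:urp_amalgams} gives $G \in \U$. In particular $G$ is torsion-free, as a nontrivial torsion element would yield two distinct $k$-th roots of $e$. Since $\lvert x\rvert_*, \lvert y\rvert_* > 1$ forces $x, y \neq e$, torsion-freeness rules out $m = 0$ or $n = 0$, so we may assume $m, n \geq 1$.

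The key step is to show that $x$ and $y$ commute. Write $w = x^m = y^n$. As $w$ is a power of $x$ it commutes with $x$, and since $w = y^n$ this reads $x y^n x^{-1} = y^n$, that is $(x y x^{-1})^n = y^n$. Because $G$ has unique roots, the elements $xyx^{-1}$ and $y$ — both $n$-th roots of the same element — must coincide, so $xy = yx$. Notice that this deduction uses neither cyclic reducedness, nor $\lvert x\rvert_*, \lvert y\rvert_* > 1$, nor primitive stability of $R$; these hypotheses (together with Corollary \ref{cor:spelt_out} and Lemma \ref{lemma:amalgam_CR_powers}) would only be needed for a more explicit combinatorial description of the common root, which the statement itself does not demand.

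Once commutativity is in hand, I would consider the subgroup $\langle x, y\rangle \leq G$: it is finitely generated, abelian, and torsion-free. The relation $x^m = y^n$ with $m, n \geq 1$ makes $x$ and $y$ dependent, so $\langle x, y\rangle$ has torsion-free rank at most $1$; being finitely generated and torsion-free abelian it is free abelian, hence $\langle x, y\rangle \cong \mathbb{Z}$ (it is nontrivial since $x \neq e$). Let $z$ generate it, so $x = z^a$ and $y = z^b$ for some $a, b \in \mathbb{Z}$. Replacing $z$ by $z^{-1}$ if necessary we may assume $a \geq 1$, and then $z^{am} = x^m = y^n = z^{bn}$ together with $\ord(z) = \infty$ forces $am = bn$, whence $b = am/n \geq 1$. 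This produces $z \in G$ and $a, b \in \mathbb{N}$ with $z^a = x$ and $z^b = y$, as required.

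I expect the only genuine subtlety to be the commutativity step: the collapse $(xyx^{-1})^n = y^n \Rightarrow xyx^{-1} = y$ is what makes the argument work, and it hinges on knowing that $G$ — not merely $G_1, G_2$ — lies in $\U$, which is precisely Proposition \ref{lemma:urp_amalgams}. Should one instead pursue the combinatorial proof that the surrounding machinery seems to anticipate, the main obstacle would be a Fine–Wilf–type periodicity argument: matching the reduced expressions of $x^m$ and $y^n$ supplied by Corollary \ref{cor:spelt_out} syllable by syllable, where the two periods are twisted by the $R$-automorphisms $\phi_{r_x}, \phi_{r_y}$. It is exactly here that primitive stability of $R$ would enter, via Lemma \ref{lemma:common_root} applied to $r_x^m = r_y^n$ in $R$, yielding $r_x, r_y \in \langle t\rangle$ so that both twists are powers of a single automorphism.
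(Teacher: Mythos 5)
Your proof is correct, and it takes a genuinely different and substantially more elementary route than the paper. The paper's proof is combinatorial: it applies Lemma \ref{lemma:common_root} to $\rho(x)^m=\rho(y)^n$ in the primitively stable retract $R$ to find a common root $r$ of $\rho(x)$ and $\rho(y)$, expands $x^m=y^n$ via Corollary \ref{cor:spelt_out} into a single reduced expression $k_1\cdots k_{aml}r^{am}$, extracts the two periodicities $k_{i+al}=k_i^{r^a}$ and $k_{i+bl}=k_i^{r^b}$ from uniqueness of normal forms, and combines them with B\'ezout coefficients to get the period $k_{i+l}=k_i^{r}$ with $l=\gcd(|x|_*,|y|_*)$, finally exhibiting $z=k_1\cdots k_l r$ explicitly --- exactly the twisted Fine--Wilf argument you anticipate in your last paragraph. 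Your argument instead observes that $x$ centralises $w=x^m=y^n$, so $(xyx^{-1})^n=y^n$, and unique roots in $G$ (Proposition \ref{lemma:urp_amalgams}, which needs only $G_1,G_2\in\U$) forces $xyx^{-1}=y$; then $\langle x,y\rangle$ is a finitely generated torsion-free abelian group of rational rank at most one because of the dependency $x^m=y^n$, hence infinite cyclic, and its generator is the required $z$. Your reductions are sound: $G\in\U$ gives torsion-freeness, the hypothesis $|x|_*,|y|_*>1$ gives $x,y\neq e$ so that (as the lemma implicitly assumes, and as holds in its sole application in Proposition \ref{lemma:primitive_urp_amalgam}) $m,n\geq 1$, and the application only needs existence of $z$ with $a,b\geq 1$, not the explicit reduced form. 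What your approach buys is brevity, generality (cyclic reducedness, $|x|_*,|y|_*>1$, and primitive stability of $R$ are all seen to be superfluous for the statement as phrased), and a clean conceptual reason for the result; what the paper's approach buys is an explicit description of $z$ and of the exponents $a,b$ (in particular $\gcd(a,b)=1$ and $|z|_*=\gcd(|x|_*,|y|_*)$), information that is not used downstream.
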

\begin{proof}
    Let $\rho \colon G \to R$ be the canonical retraction. As $\rho(x)^m = \rho(y)^n$, by Lemma \ref{lemma:common_root} we see that $\rho(x)$ and $\rho(y)$ have a common primitive root in $R$, denoted by $r_0$. Following Proposition \ref{lemma:urp_amalgams} we see that $G\in\U$, therefore by Lemma \ref{lemma:primitive_roots_retract}
    \[\plog_R(\rho(x)) = \plog_G(\rho(x)),\qquad \sqrt[R]{\rho(x)} = \sqrt[G]{\rho(x)},\] 
    and the analogous equalities hold also for $\rho(y)$. Set 
    \begin{displaymath}
        c = \gcd\bigl(\plog_R(\rho(x)), \plog_R(\rho(y))\bigr)
    \end{displaymath}
    and
    \begin{displaymath}
        a = \frac{\plog_R(\rho(x))}{c}, \qquad b = \frac{\plog_R(\rho(y))}{c},
    \end{displaymath}
    so that $\gcd(a,b) = 1$. Denoting $r := r_0^c$, we have that $r_0^{ac}=r^a = \rho(x)$, and $r^b = \rho(y)$, and therefore $m a = n b$.
    
    As both $x,y$ are cyclically reduced, and $|x|_*, |y|_* > 1$, we see that $|x^m|_* = m|x|_*$ and $|y^n|_* = n|y|_*$ by Lemma \ref{lemma:amalgam_CR_powers}. Thus $m|x|_* = n|y|_*$, and 
    in fact
    \[|x|_* = a l,\qquad |y|_* = b l,\] 
    where $l = \gcd(|x|_*, |y|_*)$. Notice that $2\mid l$, and therefore $l\neq 1$.
    
    Let $x=k_1\dots k_{al}r^a$ be the reduced expression for $x$. By Corollary \ref{cor:spelt_out}, we have that
    \begin{equation}\label{eq_mad}
    y^n=x^m = k_1 \dots k_{al} k_1^{r^a} \dots k_{al}^{r^a}\dots k_1^{r^{a(m-1)}} \dots k_{al}^{r^{a(m-1)}}r^{am}
    \end{equation}
    is the (unique) reduced expression for $x^m = y^n$, where $k_1,\dots, k_{al}\in\ K_1\cup K_2$. Progressively rename the $aml$ elements of $K_1\cup K_2$ in the right-hand side of equation \eqref{eq_mad} as $k_1,\dots, k_{aml}$, so that $x^m=k_1\dots k_{aml}r^{am}$.
    
    By assumption we have $k_{i + al} = k_i^{r^a}$ for $i = 1, \dots, (m-1)al$, as is deduced from equation \eqref{eq_mad}. Moreover, we claim that $k_{i+bl} = k_i^{r^b}$ for $i = 1, \dots, (n-1)bl$ too.
    Indeed, let $\tilde k_1,\dots, \tilde k_{bl}\in K_1\cup K_2$ be 
    elements such that $y=\tilde k_1\dots \tilde k_{bl}r^b$ is the reduced expression for $y$. Then, by Corollary \ref{cor:spelt_out}, we have that
    \begin{equation}\label{eq_mad2}
        x^m=y^n = \tilde k_1 \dots \tilde k_{bl} \tilde k_1^{r^b} \dots \tilde k_{bl}^{r^b}\dots \tilde k_1^{r^{b(n-1)}} \dots \tilde k_{bl}^{r^{b(n-1)}}r^{bn} 
    \end{equation}
    is a reduced expression for $x^m=y^n$. As the reduced expression is unique, it must be that $k_j=\tilde k_j$ for all $j=1,\dots, bl$, where the $\tilde k_j$ are the elements appearing in equation \eqref{eq_mad2} and the $k_j$ are the (renaming of the) elements appearing in equation \eqref{eq_mad}.
    By construction, as seen in equation \eqref{eq_mad2}, we have that $\tilde k_{i+bl} = \tilde k_i^{r^b}$ for $i = 1, \dots, (n-1)bl$, and therefore, as claimed,
    $k_{i+bl} = k_i^{r^b}$ for $i = 1, \dots, (n-1)bl$.

    
    Suppose that $c_1, c_2 \in \mathbb{Z}$ are B\'ezout's coefficients for $\plog_R(\rho(x))$ and $\plog_R(\rho(y))$, i.e.
    \begin{displaymath}
        c = \gcd\bigl(\plog_R(\rho(x)), \plog_R(\rho(y))\bigr) = c_1 \plog_R(\rho(x)) + c_2 \plog_R(\rho(y)).
    \end{displaymath}
    As both $\plog_R(\rho(x))$ and $\plog_R(\rho(y))$ are positive integers, it must be that $c_1c_2<0$. Without loss of generality, suppose that $c_1<0$.
    It then follows that $c_1, c_2$ are also B\'ezout's coefficients for $a,b$, that is $1=c_1 a + c_2 b$. In particular $c_1 al + c_2 bl = l$, and consequently
    \begin{displaymath}
    \begin{split}
        k_{i+l} &= k_{i + c_1 al + c_2 bl} = \bigl(k_{i + c_1 al + (c_2-1) bl}\bigr)^{r^b}=\dots\\
        &=k_i^{r^{c_1 a + c_2b}} = k_i^r
    \end{split}
    \end{displaymath}
    for all $i=1,\dots,(am-1)l$.

    If we set $z = k_1 \dots k_l r \in G$, we see that $z^a = x$ and $z^b = y$.
\end{proof}

\begin{proposition}
    \label{lemma:primitive_urp_amalgam}
    The class $\Ups$ is closed under taking amalgams over retracts.
\end{proposition}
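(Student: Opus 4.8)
The plan is to prove $G = G_1 \ast_R G_2 \cong (K_1 \ast K_2)\rtimes R \in \Ups$, assuming $G_1, G_2 \in \Ups$. That $G$ has the Unique Root property is already Proposition \ref{lemma:urp_amalgams}, so the whole task is to verify primitive stability in the sense of Definition \ref{definition:primitive_stability}: every nontrivial $g \in G$ has a primitive root, and $\plog_G(g^n) = n\,\plog_G(g)$ for all $n$. First I would set up the retract picture. Each factor $G_i$ is a retract of $G$ (the homomorphism $G \to G_i$ killing $K_{3-i}$ and restricting to the identity on $K_i$ and $R$), and $R$ is a retract of $G_1$; by closure of $\Ups$ under subgroups (Lemma \ref{lemma:primitive_stability_subgroups}) all three of $G_1, G_2, R$ lie in $\Ups$, so they are primitively stable retracts of $G\in\U$. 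Since primitive stability is conjugation-invariant by Remark \ref{remark:plog_conjugacy}, and since every element is conjugate to a cyclically reduced one (automatic when $\lvert g\rvert_*$ is even, and via the prefix conjugation of Lemma \ref{lemma:amalgam_conjugator_length} when $\lvert g\rvert_*$ is odd and at least $3$), it suffices to treat cyclically reduced $g$.

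The case $\lvert g\rvert_* \leq 1$ is handled by the retract lemma: here $g$ lies in a factor $G_i$, and so do all of its powers. By Lemma \ref{lemma:primitive_roots_retract}, the primitive root and primitive logarithm of $g$, and of each $g^n$, computed in $G$ coincide with those computed in $G_i$. Primitive stability of $G_i$ then gives $\plog_G(g^n) = \plog_{G_i}(g^n) = n\,\plog_{G_i}(g) = n\,\plog_G(g)$.

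The substantial case is $\lvert g\rvert_* \geq 2$. I would first show $\sqrt[G]{g}$ is defined: if $g = r^k$, then $r^k = g$ cyclically reduced forces $r$ cyclically reduced by Lemma \ref{lemma:amalgam_CR_powers}, with $\lvert r\rvert_* \geq 2$ and $\lvert g\rvert_* = k\lvert r\rvert_*$, so $k \leq \lvert g\rvert_*/2$; the exponents are bounded, hence a maximal one exists and $\plog_G(g)$ is defined. Write $r = \sqrt[G]{g}$ and $k = \plog_G(g)$. For primitive stability, take $s \in G$ and $m = \plog_G(g^n)$ with $s^m = g^n = r^{kn}$; as before $s$ is cyclically reduced with $\lvert s\rvert_* \geq 2$. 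Now apply Lemma \ref{lemma:amalgam_common_root} to $s^m = r^{kn}$ to obtain $z \in G$ and $a,b \in \mathbb{N}$ with $s = z^a$ and $r = z^b$. Then $g = z^{bk}$, so maximality of $k$ forces $b = 1$, giving $z = r$ and $s = r^a$; comparing $r^{am} = s^m = r^{kn}$ in the torsion-free group $G$ yields $am = kn$. Finally, $r$ is a root of $g^n$ with exponent $kn$, so maximality of $m$ gives $kn \leq m \leq am = kn$, whence $m = kn$ and $a = 1$. Therefore $\plog_G(g^n) = n\,\plog_G(g)$, and $G$ is primitively stable.

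The genuinely hard work is already isolated in Lemma \ref{lemma:amalgam_common_root}; granting it, the main obstacle is the bookkeeping in the case $\lvert g\rvert_* \geq 2$, specifically establishing that the primitive root exists (the boundedness of admissible exponents) and then extracting the equalities $m = kn$, $a = 1$ from the common root $z$ using only unique roots and the maximality defining $\plog_G$. The point requiring the most care is verifying that the hypotheses of Lemma \ref{lemma:amalgam_common_root} (cyclically reduced, with $\lvert\cdot\rvert_* > 1$) hold simultaneously for both $s$ and $r$, which is exactly why the preliminary reduction to cyclically reduced $g$ and the root analysis via Lemma \ref{lemma:amalgam_CR_powers} are indispensable.
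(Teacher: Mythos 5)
Your proof is correct and follows essentially the same route as the paper: reduce to cyclically reduced elements via conjugation invariance, handle $\lvert g\rvert_*\leq 1$ through the retract $G_i$ and Lemma \ref{lemma:primitive_roots_retract}, and settle $\lvert g\rvert_*\geq 2$ by feeding the equation $s^m=r^{kn}$ into Lemma \ref{lemma:amalgam_common_root} and then using unique roots together with the maximality defining $\plog_G$ to force the exponents $a=b=1$. Your explicit boundedness argument for the existence of $\sqrt[G]{g}$ (via $\lvert g\rvert_*=k\lvert r\rvert_*$) is a slightly more detailed version of the paper's appeal to Lemma \ref{lemma:amalgam_CR_powers}, but the substance is identical.
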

\begin{proof}
    Let $G_1 = K_1 \rtimes R$, $G_2 = K_2 \rtimes R$ be groups in $\Ups$, and consider $G = G_1 \ast_R G_2 \simeq (K_1 \ast K_2)\rtimes R$, their amalgam along the common retract~$R$. 
    
    By Proposition \ref{lemma:urp_amalgams} we have that $G\in\U$. To prove that $G$ is also primitively stable, consider an element $g \in G$ and $n \in \mathbb{N}$. Following Remark \ref{remark:plog_conjugacy}, without loss of generality we may assume that $g$ is cyclically reduced. There are two distinct cases to consider: either $g$ belongs to one of the factors, or not.
    
    \smallskip
    Suppose that $g$ belongs to one of the factors, assume $g \in G_1$. The group $G_1$ is a retract of $G$, with the retraction map $\rho_1$ defined on the generators of $G$ in the following manner:
    \begin{displaymath}
        \rho_1(g) = \begin{cases}
            g & \mbox{ if $g \in R$},\\
            g & \mbox{ if $g \in K_1$},\\
            1 & \mbox{ if $g \in K_2$}.
        \end{cases}
    \end{displaymath}
    From Lemma \ref{lemma:primitive_roots_retract} we see that $\plog_G(g)$ is defined. As $G \in \U$, we see that $\sqrt[G]{g}=\sqrt[G_1]{g}$. Moreover
    \begin{equation*}\label{equation:primitive_urp_amalgam1}
        \plog_G(g^n) = \plog_{G_1}(g^n) = n\plog_{G_1}(g) = n\plog_G(g),
    \end{equation*}
    therefore the element $g$ is primitely stable.
    
    \smallskip
    For the remaining case, suppose that $g$ does not belong to a factor. Therefore $\lvert g\rvert_*>1$. Let
    \[x = \sqrt[G]{g},\qquad m = \plog_G(g)\] 
    and 
    \[y = \sqrt[G]{g^n},\qquad e = \plog_G(g^n).\]
    Note that  $\plog_G(g)$ and $\plog_G(g^n)$ are defined by Lemma \ref{lemma:amalgam_CR_powers} and thus $x,y$ are given uniquely as $G \in \U$.
    
    As $g$ is cyclically reduced, by Lemma \ref{lemma:amalgam_CR_powers} we see that the three elements $x$, $x^{mn} = g^n = y^e$, and $y$ are cyclically reduced. 
    By definition of primitive logarithm we have that 
    \[e=\plog_G(g^n)\geqslant n\plog_{G}(g) =mn.\] 
    By Lemma \ref{lemma:amalgam_common_root}, there is an element $z \in G$ and natural numbers $a,b \in \mathbb{N}$ such that $z^a = x$ and $z^b = y$. Therefore $g^n = z^{be}$, which is a contradiction with the maximality of $\plog_G(g^n)$ unless $b = 1$. Hence $z = y$, by unique roots. Similarly $z^a = x$ and, consequently, $g = z^{am}$. Again, this implies that $a = 1$, and therefore $x = y$, that is $\sqrt[G]{g^n} = \sqrt[G]{g}$ and $\plog_G(g^n) = n\plog_G(g)$.
\end{proof}

In view of Lemma \ref{lemma:primitive_urp_direct} and Proposition \ref{lemma:primitive_urp_amalgam}, from Theorem \ref{general:GP_CSS} we deduce:
\begin{theorem}
    \label{lemma:primitive_urp_gp}
	The class $\Ups$ is closed under taking graph products.
\end{theorem}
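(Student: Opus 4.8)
The plan is to invoke the general closure principle of Theorem \ref{general:GP_CSS}, taking for $\mathcal{P}$ the property of a group belonging to the class $\Ups$ of primitively stable groups with unique roots. That theorem reduces closure under finite graph products to exactly two hypotheses: preservation under finite direct products, and preservation under amalgamation over a common retract. Both of these ingredients have just been secured, so essentially all of the genuine work is already behind us, and the present statement is a formal consequence. This mirrors exactly the deduction used for the analogous statement about $\U$ in Theorem \ref{lemma:urp_gp}.

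Concretely, I would first record that Lemma \ref{lemma:primitive_urp_direct} establishes closure of $\Ups$ under finite direct products, and Proposition \ref{lemma:primitive_urp_amalgam} establishes closure of $\Ups$ under amalgams over retracts. With these two hypotheses verified, Theorem \ref{general:GP_CSS} applies verbatim and yields that $\Ups$ is preserved under finite graph products. For a finite graph this completes the argument immediately.

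The only point requiring a little care — and hence the single mild obstacle — is the passage from finite to arbitrary graph products, since Theorem \ref{general:GP_CSS} is phrased for finite graphs, whereas the statement is unrestricted. I would handle this via the finite support of elements, using that both $\U$ and the finite case of $\Ups$ are already available. Given $G = \Gamma\mathcal{G}$, unique roots pass to $G$ at once: for any equation $x^n = y^n$ the element $S = \supp(x)\cup\supp(y)$ is finite, so $x,y$ lie in the full subgroup $G_S$, which is a finite graph product and hence lies in $\U$, forcing $x = y$. For primitive stability, fix $g \in G$ with $g \neq e$ and set $S = \supp(g)$, which is finite; then $G_S$ is a finite graph product and therefore lies in $\Ups$ by the finite case, while $g$ and each power $g^n$ lie in $G_S$ since $\supp(g^n) \subseteq S$. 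As $G_S$ is a retract of $G$ and $G \in \U$, Lemma \ref{lemma:primitive_roots_retract} gives $\sqrt[G]{g} = \sqrt[G_S]{g}$ and $\plog_G(g) = \plog_{G_S}(g)$, and the same for $g^n$. Primitive stability of $G_S$ then yields $\plog_G(g^n) = \plog_{G_S}(g^n) = n\,\plog_{G_S}(g) = n\,\plog_G(g)$, so $g$ is primitively stable in $G$. Hence $G \in \Ups$, and the class is closed under graph products in full generality.
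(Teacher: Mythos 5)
Your proof is correct and follows exactly the paper's route: Lemma \ref{lemma:primitive_urp_direct} and Proposition \ref{lemma:primitive_urp_amalgam} verify the hypotheses of Theorem \ref{general:GP_CSS}, which then gives closure under finite graph products. Your additional finite-support argument for infinite graphs (reducing to the full subgroup $G_S$, which is a retract, and invoking Lemma \ref{lemma:primitive_roots_retract}) is a valid refinement of a point the paper leaves implicit.
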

As an immediate corollary:
\begin{corollary}
    \label{cor:raag_ps}
    Right-angled Artin groups belong to $\Ups$.
\end{corollary}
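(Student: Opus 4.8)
The plan is to observe that right-angled Artin groups are precisely the graph products in which every vertex group is infinite cyclic, and then to invoke the closure of $\Ups$ under graph products established in Theorem \ref{lemma:primitive_urp_gp}. Since all the real work has already been packaged into that theorem, the only thing that genuinely needs checking is the base case, namely that $\mathbb{Z} \in \Ups$.

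First I would verify that $\mathbb{Z}$ has the Unique Root property. Writing $\mathbb{Z} = \langle a \rangle$ multiplicatively, if $x^n = y^n$ for $x = a^i$, $y = a^j$ and some $n \geq 1$, then $in = jn$ forces $i = j$, whence $x = y$; thus $\mathbb{Z} \in \U$. Next I would check primitive stability. For a non-trivial element $g = a^k$ with $k \neq 0$, the equation $r^n = g$ with $r = a^j$ amounts to $jn = k$, so the largest admissible exponent $n$ is $\lvert k\rvert$, attained with $r = a$ when $k > 0$ and $r = a^{-1}$ when $k < 0$. Hence $\plog_{\mathbb{Z}}(a^k) = \lvert k\rvert$, and for every $n \geq 1$ we compute $\plog_{\mathbb{Z}}(g^n) = \plog_{\mathbb{Z}}(a^{kn}) = \lvert kn\rvert = n\lvert k\rvert = n\,\plog_{\mathbb{Z}}(g)$, which is exactly the condition of Definition \ref{definition:primitive_stability}. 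Therefore $\mathbb{Z} \in \Ups$.

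Finally, since every right-angled Artin group is a graph product whose vertex groups are all copies of the infinite cyclic group, and each such vertex group lies in $\Ups$ by the above, Theorem \ref{lemma:primitive_urp_gp} immediately yields that the right-angled Artin group itself belongs to $\Ups$. There is essentially no obstacle here: the substantive content is entirely contained in Theorem \ref{lemma:primitive_urp_gp}, and the verification of the base case $\mathbb{Z} \in \Ups$ is elementary.
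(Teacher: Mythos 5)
Your proposal is correct and follows exactly the route the paper intends: the corollary is stated as an immediate consequence of Theorem \ref{lemma:primitive_urp_gp}, with the only substantive check being that $\mathbb{Z}\in\Ups$, which you verify correctly. No issues.
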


\section{Examples of primitively stable groups with Unique Root property}
\label{section:examples_of_Ups}
Let $G$ be a group and $g \in G$. We define \emph{the radical of $g$ in $G$} as the subset
\begin{displaymath}
    \Rad_G(g) := \bigl\{r \in G \mid r^a \in \langle g \rangle \mbox{ for some } a \in \mathbb{Z}\setminus\{0\}\bigr\}.
\end{displaymath}
Notice that the set $\Rad_G(g)$ does not have to be a subgroup of $G$. Indeed, let $G=\langle x, y \parallel x^2 = y^2 \rangle$ be the fundamental group of the Klein bottle. Then $x,y \in \Rad_G(x^2)$, but $xy \notin \Rad_G(x^2)$. 

Nevertheless, in Proposition \ref{lemma:cyclic_radical_criterion} we prove that $\Rad_G(g)$ is a cyclic subgroup for any non-trivial element $g$ of a group $G\in\Ups$.

In the context of nilpotent groups, our notion of the radical of an element $g$ coincides with the notion of isolator of the subgroup $\langle g\rangle$. In particular, if $G$ is a nilpotent group, then for every element $g \in G$ the radical $\Rad_G(g)$ is a subgroup of $G$ (see \cite[Theorem 2.5.8]{khukhro_nilpotent}).

\begin{lemma}
\label{lemma:radical_all_or_nothing}
Let $G$ be a group and let $g_1, g_2 \in G \setminus \{e\}$ be arbitrary. If $\Rad_G(g_1) \cap \Rad_G(g_2) \neq \{e\}$ then $\Rad_G(g_1) = \Rad_G(g_2)$.
\end{lemma}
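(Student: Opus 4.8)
The plan is to reduce everything to the production of a single \emph{common non-trivial power} of $g_1$ and $g_2$, and then to exploit the elementary fact that such a common power already forces the two radicals to coincide. I would isolate this fact first, as the engine of the proof: if $g_1^p = g_2^q$ for some non-zero integers $p,q$, then $\Rad_G(g_1) = \Rad_G(g_2)$. Indeed, given $r \in \Rad_G(g_1)$, by definition there are $c \in \mathbb{Z}\setminus\{0\}$ and $d \in \mathbb{Z}$ with $r^c = g_1^d$; then
\[ r^{cp} = g_1^{dp} = (g_1^p)^d = (g_2^q)^d = g_2^{dq} \in \langle g_2\rangle, \]
and since $cp \neq 0$ this yields $r \in \Rad_G(g_2)$. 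The computation is symmetric in the two elements (replacing $p$ by $q$ and $r^c=g_1^d$ by $r^c=g_2^d$), so both inclusions follow at once and hence $\Rad_G(g_1) = \Rad_G(g_2)$.

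It then remains to manufacture such exponents $p,q$ from the hypothesis. By assumption there is $h \in \Rad_G(g_1) \cap \Rad_G(g_2)$ with $h \neq e$, so by definition of the radical there are non-zero integers $a,b$ and integers $s,t$ with $h^a = g_1^s$ and $h^b = g_2^t$. Passing to the common exponent $ab$ gives
\[ g_1^{sb} = (h^a)^b = h^{ab} = (h^b)^a = g_2^{ta}, \]
so that setting $p = sb$ and $q = ta$ we obtain $g_1^p = g_2^q$, and the engine of the first paragraph finishes the proof, provided $p$ and $q$ are non-zero.

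The one point genuinely requiring care, and the step I expect to be the real obstacle, is precisely the non-vanishing of $p = sb$ and $q = ta$; since $a,b \neq 0$, this amounts to $s \neq 0$ and $t \neq 0$. This is where the order of the witnessing element $h$ enters: if $s = 0$ then $h^a = g_1^0 = e$ with $a \neq 0$, forcing $h$ to have finite order, and symmetrically for $t$. Thus as soon as $h$ can be taken of infinite order the whole argument goes through verbatim, and this is automatic in the relevant setting, since the ambient groups of interest (those in $\Ups$, and already the torsion-free groups such as the Klein bottle group discussed above) have no non-trivial torsion, so any $h \neq e$ has infinite order and therefore $h^a \ne e \ne h^b$ gives $s,t \neq 0$ directly.
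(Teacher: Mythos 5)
Your argument is correct and follows essentially the same route as the paper's proof: produce a common non-trivial power of $g_1$ and $g_2$ from the witnessing element, then use it to transfer radical membership in both directions (the paper runs the two steps as a single chain of equalities, taking an arbitrary $s\in\Rad_G(g_1)$ with $s^{c}=g_1^{k}$ and showing $s^{can}\in\langle g_2\rangle$). Your separation into an ``engine'' plus a reduction is only an organisational difference.

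The substantive point is the torsion caveat in your last paragraph, and you are right to flag it: it is not a defect of your write-up but of the statement itself. The paper's proof silently asserts that \emph{all} the exponents $a,b,m,n,c,k$ appearing in it can be chosen non-zero, and this fails exactly when the witnessing element (or the arbitrary element $s$) has finite order. Indeed, as stated for arbitrary groups the lemma is false: in $G=\mathbb{Z}/2\times\mathbb{Z}\times\mathbb{Z}=\langle t\rangle\times\langle x\rangle\times\langle y\rangle$ the involution $t$ satisfies $t^{2}=e\in\langle x\rangle\cap\langle y\rangle$, so $t\in\bigl(\Rad_G(x)\cap\Rad_G(y)\bigr)\setminus\{e\}$, yet $x\notin\Rad_G(y)$, so the two radicals differ. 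Since the lemma is only ever needed for torsion-free groups (it sits alongside Proposition \ref{lemma:cyclic_radical_criterion}, whose hypothesis is torsion-freeness), your restriction to infinite-order witnesses is exactly the right repair; and your version is in fact slightly more careful than the paper's, since your engine only requires the exponent \emph{on the root} to be non-zero and tolerates a zero exponent on $g_1$ where it is harmless.
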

\begin{proof}
    Suppose that there exists $r \in \Rad_G(g_1) \cap \Rad_G(g_2)$ such that $r\neq e$. This means that there are $a, b, m, n\in  \mathbb{Z}\setminus \{0\}$ such that $g_1^a = r^m$ and $g_2^b = r^n$. Let $s \in \Rad_G(g_1)$ be arbitrary and let $c, k \in \mathbb{Z}\setminus\{0\}$ be such that $g_1^{k} = s^{c}$. It follows that
    \begin{displaymath}
        (s^{c})^{an} = (g_1^{k})^{an} = (g_1^a)^{k n} = (r^m)^{k n} = (r^n)^{k m} = (g_2^b)^{k n}
    \end{displaymath}
    and we see that $s \in \Rad_G(g_2)$. Consequently $\Rad_G(g_1) \subseteq \Rad_G(g_2)$. The opposite inclusion can be shown analogously, and therefore $\Rad_G(g_1)= \Rad_G(g_2)$.
\end{proof}

\begin{proposition}
    \label{lemma:cyclic_radical_criterion}
   Let $G$ be a torsion-free group. An element $g\in G$ is primitively stable with unique roots in $G$ if and only if $\Rad_G(g)$ is an infinite cyclic subgroup of $G$.
\end{proposition}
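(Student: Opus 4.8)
The plan is to prove both implications, handling the reverse direction first since it is the more direct one. Throughout I may assume $g\neq e$, the case $g=e$ being vacuous (then $g$ is not primitively stable and $\Rad_G(g)=\{e\}$ by torsion-freeness).

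For the reverse implication, assume $\Rad_G(g)=\langle r\rangle$ is infinite cyclic. Since $g\in\Rad_G(g)$ we may write $g=r^k$ with $k\geq 1$. Any root of $g$ lies in $\Rad_G(g)=\langle r\rangle$, so $\sqrt[G]{g}$ is defined, and as $\langle r\rangle\cong\mathbb{Z}$ roots there are unique; hence $g$ has unique roots. For primitive stability, note that $r$ is a non-trivial element of $\Rad_G(g)\cap\Rad_G(g^n)$, so Lemma~\ref{lemma:radical_all_or_nothing} gives $\Rad_G(g^n)=\langle r\rangle$ for every $n\geq 1$. Thus every root of $g^n$ is a power $r^i$, and $r^{ij}=g^n=r^{kn}$ forces $ij=kn$; the maximal admissible exponent is $j=kn$, whence $\plog_G(g^n)=kn=n\,\plog_G(g)$. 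So $g$ is primitively stable with unique roots.

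For the forward implication, set $r=\sqrt[G]{g}$, defined by primitive stability and unique by the Unique Root property; as $G$ is torsion-free and $g\neq e$, the subgroup $\langle r\rangle$ is infinite cyclic. Writing $g=r^k$ with $k=\plog_G(g)$, the inclusion $\langle r\rangle\subseteq\Rad_G(g)$ is immediate from $(r^i)^k=g^i$. I first record that $r$ is itself primitively stable with $\plog_G(r)=1$: if $w^j=r^c$ for some $w\in G$ and $j,c\geq 1$, then $w^{jk}=r^{ck}=g^c$, so $jk\leq\plog_G(g^c)=ck$ by primitive stability of $g$, giving $j\leq c$; since $r^c$ realises the exponent $c$, we obtain $\plog_G(r^c)=c$ for all $c\geq 1$, and in particular $\plog_G(r)=1$, so $r$ is not a proper power in $G$.

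The heart of the forward direction is to show that $\langle r\rangle$ is isolated, i.e.\ that $s^a\in\langle r\rangle$ implies $s\in\langle r\rangle$; together with $\langle g\rangle\subseteq\langle r\rangle$ this yields $\Rad_G(g)\subseteq\langle r\rangle$ and hence equality. Suppose $s^a=r^t$ with $a,t\geq 1$ (the case $t<0$ reduces to this after replacing $s$ by $s^{-1}$, and $t=0$ forces $s=e$). Then $s$ centralises $r^t=s^a$, so $(srs^{-1})^t=s\,r^t s^{-1}=r^t$, while $\plog_G(srs^{-1})=\plog_G(r)=1$ by Remark~\ref{remark:plog_conjugacy}. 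Granting the root-uniqueness statement
\[ w^t=r^t \ \text{and}\ \plog_G(w)=1 \ \Longrightarrow\ w=r, \]
which I denote $(\ast)$, applied to $w=srs^{-1}$, we conclude $srs^{-1}=r$, so $s$ commutes with $r$. Then $\langle s,r\rangle$ is a torsion-free abelian group in which $a[s]=t[r]$, hence of rank one, and since $r$ is not a proper power it generates $\langle s,r\rangle$; therefore $s\in\langle r\rangle$, as required.

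It remains to establish $(\ast)$, which is where I expect the main difficulty. For the single exponent $t=k$ it is immediate: $w^k=r^k=g$ together with the Unique Root property of $g$ force $w=r$. For a general exponent one cannot invoke unique roots of $g$ directly, since $r^t\neq g$, and the natural manipulations — raising to the $k$-th power, or extracting a prime factor of $t$ — reduce $(\ast)$ to the uniqueness of roots of the powers $g^m$, a family of statements entangled with $(\ast)$ itself. My plan is to prove $(\ast)$, together with the assertion that every root of each $r^t$ lies in $\langle r\rangle$, by a simultaneous strong induction on $t$. In the inductive step the centralising trick above reduces matters to a coprime relation $w^{n_1}=r^{k_1}$ with $\gcd(n_1,k_1)=1$; using the inductive uniqueness to force $w$ and $r$ to commute, a B\'ezout argument then exhibits $r$ as an $n_1$-th power, and $\plog_G(r)=1$ forces $n_1=1$, so $w\in\langle r\rangle$. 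Making this induction precise — in particular controlling the step where the relevant exponent does not strictly drop — is the technical core; once it is in place, the mechanism of the previous paragraph closes the forward implication and completes the proof.
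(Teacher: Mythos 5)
Your reverse implication is correct and is essentially the paper's own argument: every root of every power of $g$ lies in $\Rad_G(g)=\langle r\rangle\cong\mathbb{Z}$, where primitive logarithms can be computed directly (the detour through Lemma~\ref{lemma:radical_all_or_nothing} is harmless).

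The forward implication contains the gap you yourself flag: statement $(\ast)$ is never proved, and no induction will prove it, because under the literal element-wise hypotheses both $(\ast)$ and the forward implication are false. Take the Klein bottle group $G=\langle x,y\parallel x^2=y^2\rangle$ (torsion-free) and $g=x$. Passing to $G/\langle x^2\rangle\cong\mathbb{Z}/2\ast\mathbb{Z}/2$ one checks that the only root of $x$ is $x$ itself, so $x$ has unique roots and $\plog_G(x)=1$; a short computation with the central element $z=x^2$ shows $\plog_G(x^n)=n$ for all $n\geq 1$, so $x$ is primitively stable. Yet $y^2=x^2\in\langle x\rangle$ puts $y$ in $\Rad_G(x)$, and $x,y$ do not commute, so $\Rad_G(x)$ is not a cyclic subgroup; taking $w=y$, $r=x$, $t=2$ also refutes $(\ast)$. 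The underlying issue is that the paper's one-line proof of this direction appeals to Lemma~\ref{lemma:common_root}, whose hypothesis is $G\in\Ups$ --- that is, the candidate roots $s\in\Rad_G(g)$ are themselves assumed primitively stable with unique roots --- and the proposition is only ever invoked in that global form (Theorem~\ref{thm:C}). Under that hypothesis the forward direction is immediate: for $s^a=g^b$ with $a,b\geq 1$ after adjusting signs, Lemma~\ref{lemma:common_root} gives $\sqrt[G]{s}=\sqrt[G]{g}$ and hence $s\in\langle\sqrt[G]{g}\rangle$, so none of your machinery is needed. Your scaffolding is not wasted, though: if \emph{unique roots} is read as a property of the ambient group rather than of $g$ alone, then $(\ast)$ is precisely the unique-root property of the element $r^t$, your conjugation trick yields $srs^{-1}=r$, and your argument closes the forward direction assuming only global unique roots plus primitive stability of the single element $g$ --- slightly less than the paper uses. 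But as a statement about one element $g$ with no assumption on the rest of the group, the implication cannot be rescued, and you should either adopt the global hypothesis or strengthen the unique-root assumption accordingly.
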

\begin{proof}
    If $g$ is primitively stable with unique roots, then exploiting Lemma \ref{lemma:common_root} one can show that the element $\sqrt[G]{g}$ generates $\Rad_G(g)$.
    
    Assume that $\Rad_G(g)$ is cyclic, and let $r \in \Rad_G(g)$ be a generator such that $g = r^n$ for some $n \in \mathbb{N}\setminus\{0\}$.
    We now show that $r=\sqrt[G]{g}$, that $n=\plog_G(g)$, and that $g$ is primitively stable, in this order.
    
    For this purpose, let $h_1, h_2 \in G$ and suppose that $h_1^m = g = h_2^m$ for some $m \in \mathbb{N}$. By definition $h_1, h_2 \in \Rad_G(g)$, hence $h_1 = h_2$ because the infinite cyclic group has unique roots. This means that $g$ has unique roots in $G$ and $r = \sqrt[G]{g}$.
    
    Assume now that $h \in G$ is such that $h^a = g$ for some $a \in \mathbb{N}$. By definition $h \in \Rad_G(g)$, and therefore $h = r^b$ for some $b \in \mathbb{Z}$. We see that 
    \[ r^n=g=h^a=r^{ab}.\] 
    The element $r$ has infinite order, so $ab = n$, hence $b > 0$ and $a \leq n$. It follows that $n$ is maximal, that is $n = \plog_G(g)$.
    
    Finally, we show that $\plog_G(g^m)=m\cdot\plog_G(g)$ for all $m\in\mathbb{N}$. Suppose that there is $h \in G$ such that $h^k = g^m$ for some $k\in\mathbb{N}$. Again, $h \in \Rad_G(g)$ so $h = r^a$ for some $a \in \mathbb{Z}$. As $g=r^n$, we see that $r^{nm} = r^{ak}$, hence $nm = ak$. It follows that $a > 0$ and that $m$ divides $kn$
    . We see that $\plog_G(g^m) = nm = m \plog_G(g)$, and therefore $g$ is primitively stable.
\end{proof}
In the light of this proposition, we say that $g$ has \emph{cyclic radical} in $G$ whenever it is primitively stable and with unique root.

Note that Theorem \ref{thm:C} is an immediate consequence of Proposition \ref{lemma:cyclic_radical_criterion}

\subsection{Residually torsion-free nilpotent groups}
In this subsection we prove that residually finitely generated torsion-free nilpotent groups belong to $\Ups$, that is they are primitively stable and with unique roots.
    \begin{lemma}
        \label{lemma:tf_radical_embedding}
        Let $G$ be a torsion-free group let $g \in G\setminus  \{e\}$ be arbitrary. Suppose that there is $N \unlhd G$ such that $g \notin N$ and $G/N$ is torsion-free. Then  $\Rad_G(g) \cap N = \{e\}$; in particular, if $\Rad_G(g)$ is a subgroup of $G$ then the canonical projection $\pi \colon G \to G/N$ is injective on $\Rad_G(g)$.
    \end{lemma}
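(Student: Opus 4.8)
The plan is to take an arbitrary element $r \in \Rad_G(g) \cap N$ and show that it must be trivial, by pushing the defining relation of the radical down to the torsion-free quotient $G/N$. By the definition of $\Rad_G(g)$, there is some $a \in \mathbb{Z} \setminus \{e\}$ (that is, $a \neq 0$) and some $k \in \mathbb{Z}$ with $r^a = g^k$ in $G$. The whole proof is a two-stage descent, using torsion-freeness twice.

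First I would apply the canonical projection $\pi \colon G \to G/N$ to the equation $r^a = g^k$. Since $r \in N$ we have $\pi(r) = e$, so the left-hand side collapses and we obtain $\pi(g)^k = e$ in $G/N$. Because $g \notin N$ by hypothesis, the image $\pi(g)$ is non-trivial; as $G/N$ is torsion-free, a non-trivial element cannot satisfy $\pi(g)^k = e$ unless $k = 0$. Hence $k = 0$, which forces $g^k = e$ and therefore $r^a = e$ back in $G$. Now the second application of torsion-freeness, this time of $G$ itself, together with $a \neq 0$, yields $r = e$. This establishes $\Rad_G(g) \cap N = \{e\}$.

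For the final clause, suppose additionally that $\Rad_G(g)$ is a subgroup of $G$. Then the restriction $\pi|_{\Rad_G(g)}$ is a genuine group homomorphism, and its kernel is precisely $\Rad_G(g) \cap \ker(\pi) = \Rad_G(g) \cap N$, which we have just shown to be trivial. A homomorphism with trivial kernel is injective, so $\pi$ is injective on $\Rad_G(g)$, as claimed.

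I do not expect any serious obstacle here: the argument is short and essentially self-contained. The only point requiring a little care is keeping the two uses of torsion-freeness distinct — the first in the quotient $G/N$ to eliminate the exponent $k$, and the second in $G$ to eliminate $r$ — and making sure the definition of the radical is invoked with a genuinely non-zero exponent $a$ so that the last step is valid.
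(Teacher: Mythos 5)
Your proof is correct and takes essentially the same approach as the paper: both push the relation $r^a\in\langle g\rangle$ into the quotient and use torsion-freeness of $G/N$ together with $\pi(g)\neq e$, plus torsion-freeness of $G$ itself to rule out the degenerate exponent. The paper argues the contrapositive (a nontrivial $r\in\Rad_G(g)$ has nontrivial image), but the content is identical, and your explicit handling of the case $k=0$ is a point the paper only treats implicitly.
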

    \begin{proof}
        Let $r \in \Rad_G(g)$ be nontrivial, i.e. $r^a = g^b$ for some $a,b \in \mathbb{Z} \setminus \{0\}$. As $g \notin N$ and $G/N$ is torsion-free, we see that $\pi(r)^a = \pi(g)^b \neq e$. It follows that $\pi(r) \neq e$.
    \end{proof}
    
    Before proceeding to the proof of the following proposition, let us recall that in the case of nilpotent groups the radical is always a subgroup by \cite[Theorem 2.5.8]{khukhro_nilpotent}.
    \begin{proposition}
        \label{lemma:fgtfn_ups}
       If $G$ is a finitely generated torsion-free nilpotent group then $G \in \Ups$.
    \end{proposition}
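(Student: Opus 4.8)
The plan is to verify condition (2) of Theorem \ref{thm:C}, that is, to show that $\Rad_G(g)$ is an infinite cyclic subgroup for every non-trivial $g \in G$; by Proposition \ref{lemma:cyclic_radical_criterion} this is exactly what it means for $g$ to be primitively stable with unique roots, so establishing it for all $g$ yields $G \in \Ups$. Since $G$ is nilpotent, $\Rad_G(g)$ is already a subgroup by \cite[Theorem 2.5.8]{khukhro_nilpotent}, so the only task is to prove that this subgroup is infinite cyclic. I would argue by induction on the Hirsch length $h(G)$, using as the crucial structural input the fact that in a torsion-free nilpotent group every centralizer is isolated; consequently the centre $Z(G) = \bigcap_{y \in G} C_G(y)$ is isolated, and hence $G/Z(G)$ is again finitely generated torsion-free nilpotent, of strictly smaller Hirsch length whenever $Z(G) \neq \{e\}$.

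If $G$ is abelian, then $G \cong \mathbb{Z}^d$ and $\Rad_G(g)$ is the intersection of $G$ with the rational line $\mathbb{Q}g$, so it has rank one and is therefore infinite cyclic; this is the base of the induction. If $G$ is non-abelian, then $Z := Z(G)$ is a non-trivial finitely generated free abelian group, and I would split into two cases according to the position of $g$. If $g \in Z$, then since $Z$ is isolated, any $r$ with $r^a \in \langle g \rangle$ satisfies $r^a \in Z$ and hence $r \in Z$, so that $\Rad_G(g) = \Rad_Z(g)$, which is infinite cyclic by the abelian case. If $g \notin Z$, I would apply Lemma \ref{lemma:tf_radical_embedding} with $N = Z$: its hypotheses hold because $g \neq e$, $g \notin Z$, and $G/Z$ is torsion-free, so the projection $\pi \colon G \to G/Z$ is injective on $\Rad_G(g)$. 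Since $\pi(\Rad_G(g)) \subseteq \Rad_{G/Z}(\pi(g))$ and the latter is infinite cyclic by the induction hypothesis (note $\pi(g) \neq e$), the image $\pi(\Rad_G(g))$ is cyclic; injectivity of $\pi$ on $\Rad_G(g)$ together with the fact that $g$ has infinite order then gives that $\Rad_G(g)$ itself is infinite cyclic.

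The routine parts are the free abelian computation of the base case and the bookkeeping in the inductive step. The genuine content, and the main potential obstacle, is the structural fact that $Z(G)$ is isolated, equivalently that $G/Z(G)$ is torsion-free. This is precisely where torsion-freeness of the nilpotent group is used: it ensures both that the quotient driving the induction remains inside the class, and that in the central case the radical of $g$ cannot escape $Z(G)$. I would deduce this isolation property from the standard fact that centralizers in a torsion-free nilpotent group are isolated, which itself rests on the observation that such a group admits no configuration of the form $c^x = c^{-1}$ with $c$ of infinite order, and then take intersections over all $y \in G$ to conclude that $Z(G)$ is isolated.
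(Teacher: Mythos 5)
Your proof is correct and follows essentially the same route as the paper's: both arguments induct along the upper central series, reduce the central case to a finitely generated free abelian group, and handle the non-central case by projecting to a torsion-free nilpotent quotient on which Lemma \ref{lemma:tf_radical_embedding} makes the projection injective on the radical. The only cosmetic differences are that you induct on Hirsch length and quotient by the full centre $Z(G)$, whereas the paper inducts on nilpotency class and quotients by the term $Z_i$ of the upper central series with $g \in Z_{i+1}\setminus Z_i$; the key structural input (torsion-freeness of the central quotients, equivalently isolation of the centre) is identical.
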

    \begin{proof}
        We will proceed by induction on the nilpotency class. If $G$ is 1-step nilpotent, then $G$ is a torsion-free abelian group. As $G$ is finitely generated, we see that $G$ is in fact free abelian. Clearly, free abelian groups belong to the class $\Ups$.
        
        Now suppose the statement holds for all finitely generated torsion-free groups of nilpotency class $n-1$, and suppose that $G$ is $n$-step nilpotent. Let $\{e\} = Z_0 \leq \dots \leq Z_n = G$ denote the upper central series of $G$ and let $\pi_i \colon G \to G/Z_i$ denote the corresponding canonical projections. Recall that in the case of torsion-free nilpotent groups, the quotient $G/Z_i$ is torsion-free for every $i \in \{0, \dots, n-1\}$. Let $g \in G$ be non-trivial and let $r \in \Rad_G(g)$ be nontrivial as well, repeating the argument from Lemma \ref{lemma:tf_radical_embedding} we see that $\pi_i(r)$ is trivial in $G/Z_i$ if and only if $\pi_i(g)$ is. Pick $i \in \{0, \dots, n-1\}$ such that $g \in Z_{i+1} \setminus Z_i$. We see that $\Rad_G(g)\setminus\{e\} \subseteq Z_{i+1} \setminus Z_i$.
        
        Suppose that $i = 0$, i.e. $\Rad_G(g)$ is contained in $Z_1$, the center of $G$. As finitely generated nilpotent groups are slender, i.e. every subgroup is finitely generated, we see that the center $Z_1$ is finitely generated and therefore a free abelian group. We see that $\Rad_G(g) = \Rad_{Z_1}(g)$ and hence it must be cyclic.
        
        If $i > 0$, then the group $G/Z_i$ is a finitely generated group of nilpotency class $n-i$. By induction hypothesis we see that $\Rad_{G/Z_i}(\pi_i(g))$ is cyclic. Clearly, $\pi_i(\Rad_G(g)) \subseteq \Rad_{G/Z_i}(\pi_i(g))$. As $\pi_i$ is injective on $\Rad_G(g)$ we see that $g$ has a cyclic radical in $G$.
    \end{proof}
    Note that finite generation is essential in Proposition \ref{lemma:fgtfn_ups}. Indeed, consider the group given by the presentation 
        \begin{displaymath}
            G_p=\langle a_0, a_1, \dots \parallel a_{i+1}^p = a_i \mbox{ for }i=0,1,\dots\rangle\cong \mathbb{Z}[p^{-1}].
        \end{displaymath}
    Obviously, $G_p$ is a torsion-free abelian group, hence of nilpotency class one, and it is not finitely generated. It can be seen that $\Rad_{G_p}(a_0) = G_p$.

    \smallskip
    Combining Lemma \ref{lemma:tf_radical_embedding} and Proposition \ref{lemma:fgtfn_ups} we immediately obtain the following result.
     \begin{corollary}
        If $G$ is a residually finitely generated torsion-free nilpotent group such that $\Rad_G(g)$ is a subgroup of $G$ for every $g \in G$, then $G \in \Ups$.
    \end{corollary}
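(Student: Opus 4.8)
The plan is to reduce everything to the characterisation in Proposition \ref{lemma:cyclic_radical_criterion} (equivalently Theorem \ref{thm:C}): since $G$ will be torsion-free, it suffices to show that $\Rad_G(g)$ is an infinite cyclic subgroup of $G$ for every non-trivial $g$. First I would check that $G$ is indeed torsion-free. By hypothesis $G$ is residually finitely generated torsion-free nilpotent, so for each $g \neq e$ there is $N \unlhd G$ with $g \notin N$ and $G/N$ torsion-free; if $g$ had finite order its image in the torsion-free quotient would be trivial, forcing $g \in N$, a contradiction. Thus $G$ is torsion-free and both Lemma \ref{lemma:tf_radical_embedding} and Proposition \ref{lemma:cyclic_radical_criterion} become available.

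Now fix a non-trivial $g \in G$. Using the residual property I would pick $N \unlhd G$ with $g \notin N$ such that $Q := G/N$ is finitely generated torsion-free nilpotent, and let $\pi \colon G \to Q$ be the canonical projection. Since by assumption $\Rad_G(g)$ is a subgroup and $Q$ is torsion-free, Lemma \ref{lemma:tf_radical_embedding} tells me that $\pi$ is injective on $\Rad_G(g)$. On the other hand, Proposition \ref{lemma:fgtfn_ups} gives $Q \in \Ups$, so by Proposition \ref{lemma:cyclic_radical_criterion} the radical $\Rad_Q(\pi(g))$ is infinite cyclic (note that $\pi(g) \neq e$ because $g \notin N$). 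A direct computation shows that $\pi(\Rad_G(g)) \subseteq \Rad_Q(\pi(g))$: if $r^a \in \langle g \rangle$ with $a \neq 0$, then $\pi(r)^a \in \langle \pi(g) \rangle$, so $\pi(r) \in \Rad_Q(\pi(g))$.

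Combining these observations, $\Rad_G(g)$ embeds via $\pi$ as a subgroup of the infinite cyclic group $\Rad_Q(\pi(g))$, hence is itself cyclic; since it contains the non-trivial element $g$, it must be infinite cyclic. As $g$ was arbitrary, Proposition \ref{lemma:cyclic_radical_criterion} yields $G \in \Ups$. The only genuine content beyond bookkeeping is the injectivity of $\pi$ on the radical, which is exactly where both the torsion-freeness of the quotient and the standing hypothesis that $\Rad_G(g)$ is a subgroup get used: without the latter, Lemma \ref{lemma:tf_radical_embedding} would only control $\Rad_G(g) \cap N$ and could not guarantee that the entire radical injects into $Q$.
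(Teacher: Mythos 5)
Your proof is correct and is exactly the argument the paper intends: the paper states this corollary as an immediate combination of Lemma \ref{lemma:tf_radical_embedding} and Proposition \ref{lemma:fgtfn_ups}, and your write-up simply makes that combination explicit (take a finitely generated torsion-free nilpotent quotient where $g$ survives, note the radical injects and lands in the cyclic radical of the image, then invoke Proposition \ref{lemma:cyclic_radical_criterion}). No gaps; the preliminary check that $G$ is torsion-free is a nice touch the paper leaves implicit.
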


\subsection{Hyperbolic and relatively hyperbolic groups}
Recall that a group is called \emph{elementary} if it contains a cyclic subgroup of finite index. The following was proved by Ol'shanskii \cite[Lemma 1.16]{olshanskii}.

\begin{lemma}
    \label{lemma:hyp_elementary}
   An infinite-order element $g$ of a hyperbolic group $G$ is contained in a unique maximal elementary subgroup, denoted by $\E_G(g)$.
   
   An element $x \in G$ belongs to $E_G(g)$ if and only if there exists $n \in \mathbb{Z}\setminus \{0\}$ such that $xg^nx^{-1} = g^{\pm n}$.
\end{lemma}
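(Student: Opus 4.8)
The plan is to realise $\E_G(g)$ as the stabiliser of a pair of points on the Gromov boundary $\partial G$. Since $g$ has infinite order, it acts on $\partial G$ as a loxodromic isometry with exactly two fixed points $g^{+\infty}, g^{-\infty} \in \partial G$, namely the forward and backward limits of the sequence $(g^n)_{n \in \mathbb{Z}}$. I would set
\[\E_G(g) = \{x \in G \mid x\cdot\{g^{+\infty}, g^{-\infty}\} = \{g^{+\infty}, g^{-\infty}\}\},\]
the setwise stabiliser of this pair under the action of $G$ on $\partial G$, and then verify the three assertions in turn.

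First I would show that $\E_G(g)$ is elementary, which is the technical heart of the argument. Let $\E_G^{+}(g)$ be the index-at-most-two subgroup fixing each of $g^{+\infty}$ and $g^{-\infty}$ separately (it is the kernel of the action of $\E_G(g)$ on the two-element set $\{g^{+\infty}, g^{-\infty}\}$). Every element of $\E_G^{+}(g)$ coarsely preserves a quasigeodesic axis of $g$, so one obtains a homomorphism $\tau \colon \E_G^{+}(g) \to \mathbb{R}$ recording the signed stable translation length along the oriented axis. The crucial hyperbolicity input is that stable translation lengths in a hyperbolic group are rational with uniformly bounded denominators, so they lie in a discrete subgroup of $\mathbb{R}$; this forces the image of $\tau$ to be infinite cyclic. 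The kernel of $\tau$ consists of elements fixing both endpoints with vanishing translation length, and a thin-triangles argument shows these have uniformly bounded orbits, so the kernel is finite. Hence $\E_G^{+}(g)$, and therefore $\E_G(g)$, is virtually cyclic, i.e. elementary, and plainly $g \in \E_G^{+}(g) \leq \E_G(g)$.

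Next, for maximality and uniqueness, I would argue that any elementary subgroup $H \leq G$ containing $g$ is forced inside $\E_G(g)$. Being virtually cyclic and containing the infinite-order element $g$, the limit set of $H$ on $\partial G$ is exactly $\{g^{+\infty}, g^{-\infty}\}$, and $H$ preserves its own limit set; hence $H \leq \E_G(g)$. Since $\E_G(g)$ is itself elementary and contains $g$, it is the unique maximal elementary subgroup containing $g$.

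Finally, the characterisation follows by rephrasing the stabiliser condition in terms of powers. If $x \in \E_G(g)$, then $xgx^{-1} \in \E_G(g)$ as well, so $\langle g \rangle$ and $\langle xgx^{-1}\rangle$ are commensurable infinite cyclic subgroups of the virtually cyclic group $\E_G(g)$; hence $xg^{m}x^{-1} = g^{n}$ for some $m, n \neq 0$. Comparing stable translation lengths, which are conjugacy invariants, yields $|m| = |n|$, and after renaming we obtain $xg^{n}x^{-1} = g^{\pm n}$. Conversely, a relation $xg^{n}x^{-1} = g^{\pm n}$ with $n \neq 0$ shows that $x$ carries the fixed-point pair of $g^{n}$, which is that of $g$, onto the fixed-point pair of $g^{\pm n}$, again that of $g$, so $x$ preserves $\{g^{+\infty}, g^{-\infty}\}$ and lies in $\E_G(g)$. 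The main obstacle is the first step: establishing virtual cyclicity of the pair stabiliser, which rests precisely on the discreteness of stable translation lengths together with the finiteness of the elliptic part — the core geometric facts about hyperbolic groups underlying Ol'shanskii's lemma.
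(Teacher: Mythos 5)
Your argument is correct, but note that the paper does not prove this statement at all: it is quoted verbatim from Ol'shanskii \cite[Lemma 1.16]{olshanskii} and used as a black box, so there is no in-paper proof to compare against. What you have written is the standard modern proof via boundary dynamics: realise $\E_G(g)$ as the setwise stabiliser of the fixed pair $\{g^{+\infty},g^{-\infty}\}$, prove virtual cyclicity using discreteness of stable translation lengths plus finiteness of the elliptic part, get maximality from the fact that an infinite elementary subgroup preserves its two-point limit set, and translate the stabiliser condition into the relation $xg^nx^{-1}=g^{\pm n}$ via conjugacy-invariance of translation length. Ol'shanskii's original argument is a more hands-on quasigeodesic computation in the Cayley graph that takes the displayed relation as the \emph{definition} of $\E_G(g)$ and verifies directly that it is a subgroup and elementary; your route buys a cleaner conceptual picture at the cost of importing two substantial external inputs (rationality of translation lengths with bounded denominators, and the homomorphism property of the translation/Busemann quasicharacter on $\E_G^{+}(g)$). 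The only step I would ask you to shore up is the claim that $\tau$ is a genuine homomorphism rather than merely a quasimorphism: either cite the fact that the homogeneous Busemann quasicharacter restricts to a homomorphism on the stabiliser of a boundary point, or sidestep it by choosing $h_0\in\E_G^{+}(g)$ of minimal positive translation (which exists by discreteness) and showing every element differs from a power of $h_0$ by an element of uniformly bounded displacement, of which there are finitely many by properness. Both repairs are routine, so this is a presentational issue rather than a gap.
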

From this we can deduce that torsion-free hyperbolic groups belong to~$\Ups$.
\begin{lemma}
   If $G$ is a torsion-free hyperbolic group then $G \in \Ups$.
\end{lemma}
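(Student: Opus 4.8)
The plan is to verify the criterion of Theorem \ref{thm:C}: since $G$ is torsion-free, it suffices to show that $\Rad_G(g)$ is an infinite cyclic subgroup of $G$ for every non-trivial $g \in G$ (equivalently, by Proposition \ref{lemma:cyclic_radical_criterion}, that each non-trivial $g$ has cyclic radical). As $G$ is torsion-free, every non-trivial element has infinite order, so Ol'shanskii's Lemma \ref{lemma:hyp_elementary} applies to every $g \neq e$, providing a unique maximal elementary subgroup $\E_G(g)$ together with its explicit characterisation. The core of the argument will be to identify $\Rad_G(g)$ with $\E_G(g)$.

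First I would pin down the structure of $\E_G(g)$. Fix a non-trivial $g \in G$; it has infinite order, so by Lemma \ref{lemma:hyp_elementary} it lies in $\E_G(g)$, which by definition contains a cyclic subgroup of finite index. That cyclic subgroup cannot be finite, for otherwise $\E_G(g)$, and hence $g$, would be finite, contradicting that $g$ has infinite order; thus $\E_G(g)$ is virtually infinite cyclic. A torsion-free virtually-$\mathbb{Z}$ group is in fact infinite cyclic: such a group fits into an extension of $\mathbb{Z}$ or of the infinite dihedral group $D_\infty$ by a finite kernel, torsion-freeness kills the kernel, and the reflections in $D_\infty$ rule out that quotient, leaving only $\mathbb{Z}$. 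Hence $\E_G(g) = \langle t \rangle$ for some $t$ of infinite order, and $g = t^k$ for some $k \neq 0$.

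Then I would prove the two inclusions $\Rad_G(g) = \E_G(g)$, which immediately gives the conclusion. For $\Rad_G(g) \subseteq \E_G(g)$, take a non-trivial $r \in \Rad_G(g)$, so that $r^a = g^b$ for some $a \in \mathbb{Z} \setminus \{0\}$ and some $b \in \mathbb{Z}$; since $G$ is torsion-free and $r \neq e$, we must have $b \neq 0$. As $r$ commutes with $r^a = g^b$, we get $r g^b r^{-1} = g^b$ with $b \neq 0$, so $r \in \E_G(g)$ by the characterisation in Lemma \ref{lemma:hyp_elementary}. Conversely, any $x \in \E_G(g) = \langle t \rangle$ has the form $x = t^m$, whence $x^k = t^{mk} = g^m \in \langle g \rangle$ with $k \neq 0$, so $x \in \Rad_G(g)$. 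Therefore $\Rad_G(g) = \E_G(g)$ is infinite cyclic, and Theorem \ref{thm:C} yields $G \in \Ups$. The only non-formal step is the identification of a torsion-free elementary subgroup with $\mathbb{Z}$, which I expect to be the main (though entirely standard) obstacle; the manipulations of the radical are then routine.
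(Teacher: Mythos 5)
Your proposal is correct and follows essentially the same route as the paper: both identify $\Rad_G(g)$ with the maximal elementary subgroup $\E_G(g)$ via Ol'shanskii's characterisation and the observation that a torsion-free elementary group is infinite cyclic. You merely spell out two steps the paper leaves implicit (the classification of torsion-free virtually cyclic groups and the reverse inclusion $\E_G(g)\subseteq\Rad_G(g)$), which is fine.
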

\begin{proof}
    Let $g \in G$ be non-trivial, and notice that $\Rad_G(g) \leq E_G(g)$. Indeed, let $r\in\Rad_G(g)$, so that there exist $a,b\in\mathbb{Z}\setminus\{0\}$ for which $r^a=g^b$. We have that
    \[g^b=r^a=rg^br^{-1},\]
    and by Lemma \ref{lemma:hyp_elementary} we conclude that $r\in E_G(g)$.
    As $G$ is torsion-free, we see that $E_G(g)$ is cyclic, as a torsion-free elementary group is necessarily cyclic. Since $E_G(g)$ is infinite cyclic, $\Rad_G(g)=E_G(g)$.
\end{proof}

Let $G$ be a group and let $\mathcal{H} = \{H_i \mid i \in I\}$ be a collection of subgroups of $G$, where $I$ is a set. Then $G$ is \emph{hyperbolic relative to} $\mathcal{H}$ if the coned-off Cayley graph (see \cite[Definition 6.3]{rel_hyp_osin} for the definition of coned-off Cayley graph)
is hyperbolic and fine, in the sense of Bowditch \cite{bowditch}. We refer to \cite{rel_hyp_osin} for more on relatively hyperbolic groups and equivalent definitions.

The following lemma is an easy corollary of \cite[Theorem 1.4, Theorem~1.5]{rel_hyp_osin}.
\begin{lemma}
    \label{lemma:parabolic_intersections}
   Let $G$ be torsion-free group and let $\mathcal{H} = \{H_i \mid i\in I\}$ be a collection of subgroups of $G$. If $G$ is hyperbolic relative to $\mathcal{H}$ then the following are true:
    \begin{itemize}
        \item[(i)] for any $g_1, g_2 \in G$ the intersection $H_i^{g_1} \cap H_j^{g_2}$ is trivial whenever $i \neq j$;
        \item[(ii)] the intersection $H_i^g \cap H_i$ is trivial whenever $g \not\in H_i$.
    \end{itemize}
\end{lemma}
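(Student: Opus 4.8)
The plan is to obtain both statements directly from the almost malnormality of the peripheral structure --- which is the content of the cited theorems of Osin --- and then to pass from ``finite'' to ``trivial'' using that $G$ is torsion-free.

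First I would record the form in which I intend to use \cite[Theorem 1.4, Theorem~1.5]{rel_hyp_osin}: for a group $G$ hyperbolic relative to $\mathcal{H} = \{H_i \mid i \in I\}$, whenever $\lvert H_i \cap H_j^{g} \rvert = \infty$ one must have $i = j$ and $g \in H_i$. Equivalently, in contrapositive form, $H_i \cap H_j^{g}$ is finite whenever $i \neq j$, and also whenever $i = j$ but $g \notin H_i$. This almost malnormality of the peripheral subgroups is exactly what those theorems provide.

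Next I would reduce statement (i) to this form by a conjugation. Since the convention here is $x^h = hxh^{-1}$, applying the automorphism $X \mapsto X^{g_1^{-1}}$ to the intersection $H_i^{g_1} \cap H_j^{g_2}$ yields $H_i \cap H_j^{g_1^{-1}g_2}$, which is the $g_1^{-1}$-conjugate of the original intersection and hence isomorphic to it. As $i \neq j$, almost malnormality makes $H_i \cap H_j^{g_1^{-1}g_2}$ finite, so $H_i^{g_1} \cap H_j^{g_2}$ is finite as well. Statement (ii) is already in the required form: $H_i^{g} \cap H_i = H_i \cap H_i^{g}$ with $g \notin H_i$ is finite by almost malnormality.

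The concluding step is immediate and is where torsion-freeness is used: a finite subgroup of a torsion-free group is trivial, so both finite intersections above are in fact $\{e\}$, giving (i) and (ii). The only genuine care required --- and the reason the result is merely an ``easy corollary'' --- is to match the indexing and conjugation conventions of \cite{rel_hyp_osin} with ours when extracting the almost malnormality statement; once that bookkeeping is settled, no further argument is needed.
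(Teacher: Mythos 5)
Your proposal is correct and is precisely the argument the paper intends: the cited results of Osin give that the intersections $H_i^{g_1}\cap H_j^{g_2}$ (for $i\neq j$) and $H_i^{g}\cap H_i$ (for $g\notin H_i$) are finite, and torsion-freeness of $G$ upgrades ``finite'' to ``trivial''. The paper leaves this as an immediate corollary without writing out the conjugation bookkeeping, which you handle correctly.
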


\begin{lemma}
    \label{lemma:parabolic_radical}
   Let $G$ be torsion-free group and suppose that $G$ is hyperbolic relative to a collection of subgroups $\{H_i \mid i\in I\}$. If $g \in H_i$, then $\Rad_G(g) \subseteq H_i$, i.e. $\Rad_{H_i}(g) = \Rad_G(g)$.
\end{lemma}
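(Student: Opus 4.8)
The plan is to show that every nontrivial $r \in \Rad_G(g)$ must already lie in $H_i$, from which the displayed equality follows at once. First I would dispose of the trivial cases: if $g = e$ then $\langle g\rangle = \{e\}$ and, since $G$ is torsion-free, $\Rad_G(g) = \{e\} \subseteq H_i$; and $r = e$ is always in $H_i$. So assume both $g$ and $r$ are nontrivial. By the definition of the radical there are integers $a, b \in \mathbb{Z}\setminus\{0\}$ with $r^a = g^b$. Since $g \in H_i \setminus \{e\}$ and $G$ is torsion-free, the element $g^b$ is nontrivial, hence so is $w := r^a = g^b$.

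The key observation is that $w$ lies in two conjugates of $H_i$ simultaneously. On the one hand $w = g^b \in H_i$. On the other hand $w$ is a power of $r$, so it commutes with $r$; writing $H_i^r = rH_ir^{-1}$ (recall the convention $x^y = yxy^{-1}$), we have $w = rwr^{-1} \in rH_ir^{-1} = H_i^r$, using that $w \in H_i$. Thus $w$ is a nontrivial element of $H_i \cap H_i^r$.

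Now I would invoke Lemma \ref{lemma:parabolic_intersections}(ii), which asserts that $H_i^r \cap H_i$ is trivial whenever $r \notin H_i$. Since we have exhibited the nontrivial element $w \in H_i \cap H_i^r$, the contrapositive forces $r \in H_i$. As $r \in \Rad_G(g)$ was arbitrary, this proves $\Rad_G(g) \subseteq H_i$.

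Finally, for the stated equality, note that $\langle g\rangle$ is the same cyclic subgroup whether computed inside $H_i$ or inside $G$, so $\Rad_{H_i}(g) = \Rad_G(g) \cap H_i$; combined with the inclusion just proved this gives $\Rad_{H_i}(g) = \Rad_G(g)$. The only genuinely substantive point — and the single place where relative hyperbolicity enters — is the step placing $w$ in both $H_i$ and its conjugate $H_i^r$ so as to trigger the malnormality-type statement of Lemma \ref{lemma:parabolic_intersections}; everything else is formal, and I expect no further obstacle.
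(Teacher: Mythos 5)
Your proposal is correct and follows essentially the same route as the paper: both arguments observe that $g^b = r^a$ commutes with $r$, hence is a nontrivial element of $H_i \cap H_i^r$, and then invoke Lemma \ref{lemma:parabolic_intersections}(ii) to conclude $r \in H_i$. Your treatment is just slightly more explicit about the trivial cases and about torsion-freeness guaranteeing $g^b \neq e$.
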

\begin{proof}
    Let $g$ be as above and let $r \in \Rad_G(g)$, i.e. $r^a = g^b$ for some $a,b \in \mathbb{Z}\setminus \{0\}$. Clearly, $r g^b r^{-1} = r r^a r^{-1} = r^a = g^b$, so $\langle g^b \rangle \leq H_i \cap H_i^r$. Following Lemma \ref{lemma:parabolic_intersections}, this is a contradiction unless $r \in H_i$.
\end{proof}
An element is said to be hyperbolic if its conjugacy class does not intersect any of the subgroups in $\mathcal{H}$. The following generalisation of Lemma \ref{lemma:hyp_elementary} was proved in \cite[Theorem 4.3]{osin_elementary}.
\begin{lemma}
    \label{lemma:rel_hyp_elementary}Let $G$ be hyperbolic relative to the family $\mathcal{H}$.
   Every hyperbolic element $g \in G$ of infinite order is contained in a unique maximal elementary subgroup $E_G(g)$, and moreover
   \begin{displaymath}
       E_G(g) = \bigl\{x \in G \mid x g^n x^{-1} = g^{\pm n} \mbox{ for some }n \in \mathbb{Z} \setminus\{0\}\bigr\}.
   \end{displaymath}
\end{lemma}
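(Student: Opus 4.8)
Since this is precisely \cite[Theorem 4.3]{osin_elementary}, at the level of the paper the statement is used as a black box; nonetheless, the plan I would follow to reconstruct it mirrors the torsion-free hyperbolic case of Lemma \ref{lemma:hyp_elementary}, with the ordinary Cayley graph replaced by the coned-off Cayley graph. First I would introduce the candidate set
\[E = \bigl\{x \in G \mid xg^nx^{-1} = g^{\pm n} \text{ for some } n \in \mathbb{Z}\setminus\{0\}\bigr\}\]
and verify the purely formal closure properties directly from the defining relation: $E$ contains $e$ and $g$; it is closed under inversion, since conjugating $xg^nx^{-1} = g^{\epsilon n}$ (with $\epsilon = \pm 1$) by $x^{-1}$ yields $x^{-1}g^{\epsilon n}x = g^n$; and it is closed under products once one passes to a common exponent $ab$ to reconcile the two sign ambiguities. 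This shows $E$ is a subgroup containing $g$.

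The substantive step, and the one I expect to be the main obstacle, is to show that $E$ is \emph{elementary}, that is, virtually cyclic. The key input is that a hyperbolic element $g$ --- whose conjugates avoid every $H_i$ --- acts as a loxodromic isometry on the coned-off Cayley graph, which by hypothesis is $\delta$-hyperbolic and fine in the sense of Bowditch. Such an element fixes exactly two points $g^{+\infty}, g^{-\infty}$ of the associated boundary, and the relation $xg^nx^{-1} = g^{\pm n}$ forces every $x \in E$ to preserve the pair $\{g^{+\infty}, g^{-\infty}\}$; conversely, the setwise stabilizer of this pair is readily seen to lie in $E$. Thus $E$ coincides with the stabilizer of the fixed-point pair of a loxodromic, and it is exactly the \emph{fineness} of the coned-off graph that upgrades this stabilizer from merely quasiconvex to virtually cyclic. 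I would expect the bulk of the work to lie in this dynamical analysis, which is where Osin's arguments and \cite[Theorem 1.4, Theorem 1.5]{rel_hyp_osin} do the heavy lifting.

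Finally I would establish maximality and uniqueness. Any elementary subgroup $F$ containing $g$ has $\langle g\rangle$ of finite index, so for each $x \in F$ the subgroups $\langle g\rangle$ and $x\langle g\rangle x^{-1}$ are commensurable in $F$; a translation-number comparison on the coned-off graph then forces $xg^mx^{-1} = g^{\pm m}$ for a suitable $m \neq 0$, giving $F \leq E$, so that $E$ is maximal. Uniqueness is then immediate: two maximal elementary subgroups containing $g$ share the infinite subgroup $\langle g\rangle$, hence stabilize the same boundary pair $\{g^{+\infty}, g^{-\infty}\}$, and must therefore both coincide with $E$.
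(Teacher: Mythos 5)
The paper offers no proof of this lemma: it is quoted verbatim as \cite[Theorem 4.3]{osin_elementary}, exactly as you observe, so there is nothing internal to compare your sketch against. Your outline --- the formal verification that $E$ is a subgroup via the common exponent $ab$, the loxodromic action on the coned-off graph to obtain virtual cyclicity, and the commensurability/translation-number argument for maximality and uniqueness --- is consistent with the structure of Osin's argument, and your deference to the citation matches the paper's own treatment of the statement as a black box.
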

From this we deduce:

\begin{proposition}
   Let $G$ be a torsion-free group and suppose that $G$ is hyperbolic relative to the collection of subgroups $\{H_i \mid i\in I\}$. Then $G \in \Ups$ if and only if $H_i \in \Ups$ for all $i \in I$.
\end{proposition}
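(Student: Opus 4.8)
The plan is to reduce everything to the radical characterisation of $\Ups$ from Proposition \ref{lemma:cyclic_radical_criterion}. Since $G$ is torsion-free and every $H_i$, being a subgroup of $G$, is torsion-free as well, it suffices to prove that $\Rad_G(g)$ is an infinite cyclic subgroup for every non-trivial $g \in G$ if and only if the analogous statement holds inside each $H_i$. The forward implication is then immediate: if $G \in \Ups$, each subgroup $H_i$ lies in $\Ups$ by the closure under subgroups established in Lemma \ref{lemma:primitive_stability_subgroups}.

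For the converse, assume $H_i \in \Ups$ for all $i$, fix a non-trivial $g \in G$, and aim to show $\Rad_G(g)$ is infinite cyclic. First I would record that the radical is conjugation-equivariant, $\Rad_G(cgc^{-1}) = c\,\Rad_G(g)\,c^{-1}$, directly from the definition (compare Remark \ref{remark:plog_conjugacy}); since being infinite cyclic is invariant under the automorphism given by conjugation, this lets me replace $g$ by any of its conjugates. There are then two cases, according to whether $g$ is parabolic (conjugate into some $H_i$) or hyperbolic.

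If $g$ is conjugate into some $H_i$, then after conjugating I may assume $g \in H_i$. By Lemma \ref{lemma:parabolic_radical} we have $\Rad_G(g) = \Rad_{H_i}(g)$, and since $H_i$ is torsion-free and lies in $\Ups$, Proposition \ref{lemma:cyclic_radical_criterion} shows $\Rad_{H_i}(g)$ is infinite cyclic, as required. If instead $g$ is hyperbolic, then $g$ has infinite order (as $G$ is torsion-free) and Lemma \ref{lemma:rel_hyp_elementary} provides the maximal elementary subgroup $E_G(g)$ together with its explicit conjugacy description. I would show $\Rad_G(g) \subseteq E_G(g)$: if $r \in \Rad_G(g)$, say $r^a = g^b$ with $a,b \neq 0$, then $rg^b r^{-1} = r\,r^a\,r^{-1} = g^b$, so $r \in E_G(g)$ by the description. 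As $E_G(g)$ is a torsion-free elementary group, it is infinite cyclic (the same observation used for torsion-free hyperbolic groups earlier in this section). Consequently $\Rad_G(g) = \Rad_{E_G(g)}(g)$, which is infinite cyclic because the infinite cyclic group belongs to $\Ups$.

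Combining the two cases, $\Rad_G(g)$ is infinite cyclic for every non-trivial $g \in G$, so $G \in \Ups$ by Proposition \ref{lemma:cyclic_radical_criterion}. I expect the main obstacle to be the hyperbolic case: one must identify $\Rad_G(g)$ precisely with the maximal elementary subgroup $E_G(g)$ by using the conjugacy characterisation of Lemma \ref{lemma:rel_hyp_elementary}, and then invoke torsion-freeness of $G$ to force this virtually cyclic group to be infinite cyclic. The parabolic case, by contrast, is essentially a direct application of Lemma \ref{lemma:parabolic_radical}.
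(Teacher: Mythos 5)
Your proposal is correct and follows essentially the same route as the paper: the forward direction via closure of $\Ups$ under subgroups (Lemma \ref{lemma:primitive_stability_subgroups}), and the converse via the radical criterion of Proposition \ref{lemma:cyclic_radical_criterion}, splitting into the parabolic case (Lemma \ref{lemma:parabolic_radical} after conjugating, using conjugation-equivariance of the radical) and the hyperbolic case (identifying $\Rad_G(g)$ with the maximal elementary subgroup $E_G(g)$ of Lemma \ref{lemma:rel_hyp_elementary}, which is infinite cyclic by torsion-freeness). The only difference is that you spell out the inclusion $\Rad_G(g)\subseteq E_G(g)$ in the hyperbolic case, which the paper leaves implicit by analogy with the hyperbolic-group lemma earlier in the section.
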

\begin{proof}
    By Lemma \ref{lemma:primitive_stability_subgroups} the class $\Ups$ is closed under taking subgroups. Suppose therefore that $H_i \in \Ups$ for all $i \in I$,
    and let $g \in G$ be a non-trivial element. If $g$ is hyperbolic then, following Lemma \ref{lemma:rel_hyp_elementary}, $E_G(g)$ is cyclic and $\Rad_G(g) = E_G(g)$. Hence $g$ has cyclic radical.
    
    Suppose that $g \in H_i$. By Lemma \ref{lemma:parabolic_radical} we see that $\Rad_G(g) = \Rad_{H_i}(g)$ which is cyclic by assumption. Finally, if $ygy^{-1}\in H_i$ for some $y\in G$, then 
    $\Rad_G(g)=y\Rad_G(ygy^{-1})y^{-1}$, and therefore also in this case $g$ has cyclic radical.
\end{proof}

\begin{corollary}
    If $G$ is a toral relatively hyperbolic group then $G \in \Ups$. In particular, if $G$ is a limit group then $G \in \Ups$.
\end{corollary}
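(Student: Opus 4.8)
The plan is to reduce the statement to the immediately preceding proposition, which characterises membership in $\Ups$ for a torsion-free relatively hyperbolic group in terms of its peripheral subgroups. First I would unpack the definition of a toral relatively hyperbolic group: such a $G$ is, by definition, torsion-free and hyperbolic relative to a finite collection $\{H_i \mid i \in I\}$ of finitely generated abelian subgroups. Since $G$ is torsion-free and each $H_i \leq G$, every $H_i$ is itself torsion-free; being also finitely generated and abelian, each $H_i$ is free abelian of finite rank.

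Next I would observe that every finitely generated free abelian group belongs to $\Ups$. This is immediate from Proposition \ref{lemma:fgtfn_ups}, as such a group is finitely generated, torsion-free, and nilpotent (of class one). Thus all the peripheral subgroups $H_i$ lie in $\Ups$, and the preceding proposition applies directly to yield $G \in \Ups$.

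For the statement on limit groups, I would invoke the known structural fact that every limit group is toral relatively hyperbolic: a limit group is finitely generated, torsion-free, and hyperbolic relative to its maximal non-cyclic abelian subgroups, which are finitely generated and free abelian (results of Dahmani and of Alibegovi\'c). Hence a limit group falls into the case already treated, and $G \in \Ups$ follows.

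There is no serious obstacle here, as the whole argument is a direct application of the preceding proposition; the only substantive inputs are the definition of \emph{toral} (which supplies finitely generated abelian, hence free abelian, peripheral subgroups) and the external fact that limit groups are toral relatively hyperbolic. The single point meriting care is verifying that the peripheral subgroups genuinely lie in $\Ups$, and this reduces at once to the already established nilpotent case via the observation that a finitely generated torsion-free abelian group is free abelian.
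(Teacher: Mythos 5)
Your proposal is correct and is exactly the argument the paper intends (the corollary is stated without proof as an immediate consequence of the preceding proposition): the toral hypothesis gives torsion-freeness and finitely generated free abelian peripheral subgroups, which lie in $\Ups$ by Proposition \ref{lemma:fgtfn_ups} (or directly, as in its base case), and limit groups are covered by the standard fact that they are toral relatively hyperbolic.
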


\section{Separating cyclic subgroups of graph products in the pro-$p$ topology}
\label{section:prop_case}
We start the section by noting that finitely generated free abelian groups are $p$-CSS. In fact, we provide a short proof that any $p$-isolated subgroup of a free abelian group is $p$-closed:
    \begin{proposition}\label{lemma:free_abelian_p-CSS}
        Let $G$ be a free abelian group. Then every finitely generated $p$-isolated subgroup of $G$ is separable in $\prop(G)$.
        
    \end{proposition}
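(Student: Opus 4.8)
The plan is to reduce the $p$-separability of $H$ to the observation that, inside a suitable finite-rank direct summand of $G$, the subgroup $H$ is itself a co-$p$ subgroup and is therefore automatically $p$-open and $p$-closed. Writing $H$ multiplicatively, let
\[
P = \{g \in G \mid g^n \in H \text{ for some } n \geq 1\}
\]
be the isolator of $H$ in $G$. Since $G$ is abelian, $P$ is a subgroup; it is isolated (if $g^n \in P$ then $g^{nm} \in H$ for some $m$, so $g \in P$), it contains $H$, and it has the same rational rank $k = \operatorname{rank}(H)$ as $H$, because $P \otimes \mathbb{Q} = H \otimes \mathbb{Q}$ inside $G \otimes \mathbb{Q}$. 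As a subgroup of a free abelian group $P$ is free abelian, and being of finite rank it is finitely generated; consequently the index $[P : H]$ is finite, so $P/H$ is a finite abelian group.

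The first step I would carry out is to show that $P$ is a direct summand of $G$, so that the projection $\rho \colon G \to P$ is a retraction. For this, fix a free basis of $G$: the finitely many generators of $P$ involve only finitely many basis elements, so $P$ is contained in a finitely generated free direct summand $G_1$ of $G$, say $G = G_1 \oplus G_2$. Purity of $P$ in $G$ restricts to purity of $P$ in $G_1$, and since $G_1$ is finitely generated the quotient $G_1/P$ is finitely generated and torsion-free, hence free abelian; therefore $P$ is a direct summand of $G_1$, and thus of $G$. I expect this passage -- arranging a finite-rank pure subgroup to be a summand of a free abelian group of possibly infinite rank -- to be the only genuinely delicate point; the reduction to the finitely generated summand $G_1$ is what makes it routine, and everything else is formal.

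With $\rho \colon G \to P$ a retraction in hand, I would invoke the residual $p$-finiteness of free abelian groups (every non-trivial element survives in a quotient $\mathbb{Z} \twoheadrightarrow \mathbb{Z}/p^N$ obtained from a coordinate projection) together with Lemma \ref{lemma:retract_restriction} to conclude that $\prop(P)$ is a restriction of $\prop(G)$; hence it suffices to prove that $H$ is $p$-closed in $P$. Now $H$ is $p$-isolated in $P$, as the defining implication of $p$-isolation restricts from $G$ to the subgroup $P$. Since $P/H$ is finite, $p$-isolation forces it to be a $p$-group: if $P/H$ contained an element whose order is divisible by a prime $q \neq p$, it would contain an element $\bar f$ of order exactly $q$, giving $f^q \in H$ while $f \notin H$, contradicting $p$-isolation. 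Therefore $H$ is a co-$p$ subgroup of $P$, that is $H \in \Np(P)$, so $H$ is $p$-open and hence $p$-closed in $P$ by Lemma \ref{lemma:C-open}. By the restriction property $H$ is then $p$-closed in $G$, which is exactly the desired conclusion.
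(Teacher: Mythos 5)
Your proof is correct, but it is packaged differently from the one in the paper, so a brief comparison is in order. The paper reduces at once to the case of finitely generated $G$ (implicitly via the same retract-onto-a-finite-rank-summand argument that you spell out), and then argues directly on the quotient: since $H$ is $p$-isolated, the torsion of the finitely generated abelian group $G/H$ is all $p$-torsion, so $G/H\cong\mathbb{Z}^r\oplus(\text{finite $p$-group})$ is residually $p$-finite, and hence $H$ is $p$-closed as the intersection of the preimages of the co-$p$ subgroups of $G/H$. You instead interpolate the isolator $P$ between $H$ and $G$, prove that $P$ is a finitely generated pure subgroup and therefore a direct summand (first of a finite-rank summand $G_1$, then of $G$), and then observe that $H$ has finite $p$-power index in $P$, so that $H\in\Np(P)$ is $p$-open, hence $p$-closed, in $P$; Lemma \ref{lemma:retract_restriction} then transfers closedness to $G$. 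The two arguments encode the same structural fact --- your chain $H\leq P\leq G$ with $P/H$ a finite $p$-group and $G/P$ free abelian is exactly the decomposition of $G/H$ that the paper invokes --- but yours replaces the appeal to residual $p$-finiteness of the quotient by the openness of a finite-index co-$p$ subgroup of a retract, which fits more tightly into the restriction machinery of Section \ref{section:proC_topologies}. A further merit of your write-up is that the reduction from arbitrary (possibly infinite) rank to finite rank, which the paper dispatches with a bare ``without loss of generality'', is carried out explicitly and correctly.
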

    \begin{proof}

Without loss of generality, we can assume that $G$ is a finitely generated free abelian group, and consider a $p$-isolated subgroup $C\leqslant G$.
Then, the only torsion elements in the quotient $G/C$ are $p$-torsion elements. Therefore, $G/C$ is a residually $p$-finite group, by the classification of finitely generated abelian groups. Therefore,
since the quotient is $G/C$ residually $p$-finite, the subgroup $C$ is $p$-separable in $G$.
    \end{proof}
    
On the other hand, notice that the group $\mathbb{Q}$
 of rational numbers is not $p$-CSS, for any prime number $p$. Indeed, the trivial subgroup, which is the only $p$-separable subgroup of $\mathbb Q$ being the group torsion-free and divisible, is not $p$-closed for any $p$, because this would imply that $\mathbb{Q}$ is a residually $p$-finite group.
    
    \smallskip
    To see that finite generation in Proposition \ref{lemma:free_abelian_p-CSS} is essential, consider now a free abelian group $G=\prod_{i\in \mathbb N}\mathbb Z$ with countable basis, and let $K_p=\ker \pi$, where $\pi\colon G\to \mathbb Z_{p^\infty}$ is the canonical projection onto the $p$-Pr\" ufer group $Z_{p^\infty}=\langle x_0,x_1,\dots\parallel x_0^p=e,x_1^p=x_0,\dots, x_n^p=x_{n-1,\dots}\rangle$ defined by $\pi(z_i)=x_i$, where $z_i$ is the generator of the $i$-th copy of $\mathbb Z$ in $G$. Then, $K_p$ 
    is not finitely generated and it is $p$-isolated. Moreover, as the quotient $G/K_p$ is not residually $p$-finite, $K_p$ cannot be $p$-closed in $G$.

 \begin{lemma}
        \label{lemma:restriction_p_cyclic}
        Let $G\in \Ups$ be a $p$-CSS group, and suppose that $C \leq G$ is cyclic and $p$-isolated. Then $\prop(C)$ is the restriction of $\prop(G)$.
    \end{lemma}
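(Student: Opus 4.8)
The plan is to invoke Lemma \ref{lemma:restriction}, which tells us that $\prop(C)$ is a restriction of $\prop(G)$ precisely when every $N \in \Np(C)$ is $p$-closed in $G$. Thus the whole proof reduces to verifying this single condition. Since $G \in \Ups$ is torsion-free, the cyclic $p$-isolated subgroup $C$ is either trivial or infinite cyclic. The trivial case $C = \{e\}$ can be disposed of directly: the trivial subgroup is $p$-isolated in $G$ by torsion-freeness, hence $p$-closed because $G$ is $p$-CSS, and the restriction follows at once. I would therefore assume $C = \langle g \rangle$ with $g$ of infinite order.

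Next I would identify the co-$p$-finite subgroups of $C$. As $C \cong \mathbb{Z}$ is abelian, any $N \in \Np(C)$ is a finite-index subgroup $\langle g^m \rangle$ whose quotient $C/N \cong \mathbb{Z}/m\mathbb{Z}$ is a finite $p$-group; hence $m = p^k$ for some $k \geq 0$ and $N = \langle g^{p^k} \rangle$. It therefore suffices to prove that each $\langle g^{p^k} \rangle$ is $p$-closed in $G$.

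The key step is to show that each such $\langle g^{p^k} \rangle$ is $p$-isolated in $G$, after which $p$-closedness is immediate from $G$ being $p$-CSS. Here I would use primitive stability: since $\langle g \rangle$ is $p$-isolated in $G$, Lemma \ref{lemma:characterisation_of_p-isolation} gives that $\plog_G(g)$ is a power of $p$, say $\plog_G(g) = p^e$. Primitive stability of $G$ (Definition \ref{definition:primitive_stability}) then yields $\plog_G(g^{p^k}) = p^k \plog_G(g) = p^{k+e}$, again a power of $p$. Applying Lemma \ref{lemma:characterisation_of_p-isolation} in the other direction, $\langle g^{p^k} \rangle$ is $p$-isolated in $G$, so by $p$-CSS it is $p$-closed in $G$. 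This holds for every $k \geq 0$, hence every $N \in \Np(C)$ is $p$-closed in $G$, and Lemma \ref{lemma:restriction} finishes the proof.

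The argument is short because the hypotheses $G \in \Ups$ and $p$-CSS are tailored to exactly this situation; the one genuinely substantive move is recognising that passing from $\langle g\rangle$ to its co-$p$-finite subgroup $\langle g^{p^k}\rangle$ multiplies the primitive logarithm by $p^k$ and hence preserves being a power of $p$, i.e.\ preserves $p$-isolation. Consequently I do not expect a real obstacle beyond carefully combining the characterisation of $p$-isolation (Lemma \ref{lemma:characterisation_of_p-isolation}) with the defining property of primitive stability; the only point requiring separate care is the degenerate case $C = \{e\}$, where $\plog_G$ is undefined and one instead appeals directly to torsion-freeness together with $p$-CSS.
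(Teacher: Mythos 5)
Your argument is correct and follows essentially the same route as the paper: reduce via Lemma \ref{lemma:restriction} to showing each $N=\langle g^{p^k}\rangle\in\Np(C)$ is $p$-closed, use Lemma \ref{lemma:characterisation_of_p-isolation} together with primitive stability to see that $\plog_G(g^{p^k})=p^{k+e}$ is again a power of $p$, and conclude by $p$-CSS. Your separate treatment of the degenerate case $C=\{e\}$ is a small extra care the paper omits, but otherwise the two proofs coincide.
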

    \begin{proof}
        Let $g \in G$ be a generator of $C$ and set $r = \sqrt[G]{g}$. Following Lemma \ref{lemma:characterisation_of_p-isolation} we see that $\plog_G(g)$ is a power of $p$, i.e. $g = r^{p^e}$ for some $e \in \mathbb{N}$. As $G$ is $p$-CSS, $C$ is $p$-closed in $G$. Now let $N \in \Np(C)$ be arbitrary. As $C$ is infinite cyclic, the subgroup $N$ is cyclic as well, let $f \in G$ be the generator. Clearly, $f = g^{p^k}$ for some $k \in \mathbb{N}$. We see that $f = g^{p^k} = (r^{p^e})^{p^k} = r^{p^{e+k}}$. By primitive stability $\plog_G(f) = p^{e+k}$, hence $N$ is $p$-isolated in $G$ by Lemma \ref{lemma:characterisation_of_p-isolation}.
        Thus it is $p$-separable in $G$, being $G$ a $p$-CSS group. The statement follows using Lemma \ref{lemma:restriction}.
    \end{proof}
    
    \begin{remark}
    	\label{remark:isolation_hereditary}
    	Let $G$ be a group and let $H_1, H_2$ be two subgroups such that $H_1 \leq H_2$. If $H_1$ is $p$-isolated in $G$ then $H_1$ is $p$-isolated in $H_2$.
    \end{remark}

    \begin{lemma}
        \label{lemma:direct_pCSS}
        The class of primitively stable $p$-CSS groups with Unique Root property is closed under forming direct products.
    \end{lemma}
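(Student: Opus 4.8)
The plan is to first invoke Lemma \ref{lemma:primitive_urp_direct}, which already gives that $G = G_1 \times G_2$ lies in $\Ups$ whenever $G_1, G_2 \in \Ups$; what remains is to show that $G$ is $p$-CSS, that is, that every $p$-isolated cyclic subgroup $C \leq G$ is $p$-closed. So I would fix such a $C$, write $C = \langle g \rangle$ with $g = (g_1, g_2)$, and aim to prove that $C$ is closed in $\prop(G)$, broadly following the topological strategy of the profinite case in Lemma \ref{lemma:direct_CSS}.

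The naive attempt --- setting $C_i = \langle g_i \rangle$ and applying the restriction machinery factorwise --- runs into the main obstacle: $\langle g_i \rangle$ need not be $p$-isolated in $G_i$, so Lemma \ref{lemma:restriction_p_cyclic} does not apply to it. Indeed, by Lemma \ref{lemma:plog_direct} one has $\plog_G(g) = \gcd(\plog_{G_1}(g_1), \plog_{G_2}(g_2))$, and this gcd being a power of $p$ (the condition for $C$ to be $p$-isolated, by Lemma \ref{lemma:characterisation_of_p-isolation}) does not force either $\plog_{G_i}(g_i)$ to be a power of $p$. To circumvent this I would pass from $\langle g_i\rangle$ to the maximal cyclic subgroup $R_i := \langle \sqrt[G_i]{g_i}\rangle$ when $g_i \neq e$, and set $R_i := \{e\}$ when $g_i = e$. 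In either case $R_i$ is an \emph{isolated} cyclic subgroup of $G_i$ with $g_i \in R_i$, so that $C \leq R_1 \times R_2$.

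Being isolated, $R_i$ is in particular $p$-isolated, so Lemma \ref{lemma:restriction_p_cyclic} applies and $\prop(R_i)$ is a restriction of $\prop(G_i)$ (in the degenerate case $R_i = \{e\}$ the same conclusion follows directly from Lemma \ref{lemma:restriction}, since $G_i$ being $p$-CSS forces the $p$-isolated cyclic subgroup $\{e\}$ to be $p$-closed, i.e.\ $G_i$ to be residually-$p$). Lemma \ref{lemma:restriction_product} then upgrades this to the statement that $\prop(R_1 \times R_2)$ is a restriction of $\prop(G)$. Since each $R_i$ is trivial or infinite cyclic, $R_1 \times R_2$ is a finitely generated free abelian group.

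Finally I would localise the problem to this free abelian group. As $C \leq R_1 \times R_2 \leq G$ and $C$ is $p$-isolated in $G$, Remark \ref{remark:isolation_hereditary} shows that $C$ is $p$-isolated in $R_1 \times R_2$; being cyclic, it is also finitely generated, so Proposition \ref{lemma:free_abelian_p-CSS} gives that $C$ is $p$-closed in $R_1 \times R_2$. Because $\prop(R_1 \times R_2)$ is a restriction of $\prop(G)$, a subset closed in the former is closed in the latter, whence $C$ is $p$-closed in $G$, completing the argument. The conceptual heart of the proof is thus the replacement of the cyclic subgroups $\langle g_i\rangle$ by their isolated closures $\langle \sqrt[G_i]{g_i}\rangle$, which is exactly the step that fails to be needed --- and hence is absent --- in the profinite setting.
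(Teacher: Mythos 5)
Your proof is correct and follows essentially the same route as the paper: pass from $\langle g_i\rangle$ to the maximal cyclic subgroups $R_i=\langle\sqrt[G_i]{g_i}\rangle$, use Lemma \ref{lemma:restriction_p_cyclic} and Lemma \ref{lemma:restriction_product} to see that $\prop(R_1\times R_2)$ is a restriction of $\prop(G)$, and conclude via Remark \ref{remark:isolation_hereditary} and Proposition \ref{lemma:free_abelian_p-CSS}. Your explicit treatment of the degenerate case $g_i=e$ is a small point the paper leaves implicit, but the argument is the same.
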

    \begin{proof}
        Let $G_1, G_2\in \Ups$ be $p$-CSS groups, consider $G = G_1 \times G_2$, and note that $G\in \Ups$ by Lemma \ref{lemma:primitive_urp_direct}. Let $g = (g_1, g_2) \in G$ be a $p$-isolated element. Set $r_i = \sqrt[G_i]{g_i}$ and denote $R_i = \langle r_i \rangle \leq G_i$, for $i=1,2$.
        
        Note that $\plog_{G_1}(r_1) = 1 =\plog_{G_2}(r_2)$, hence $R_1$ is $p$-isolated in $G_1$ and $R_2$ is $p$-isolated in $G_2$ in view of Lemma \ref{lemma:characterisation_of_p-isolation}. As both $G_1$ and $G_2$ are $p$-CSS, we see that $R_1$ is $p$-closed in $G_1$ and $R_2$ is $p$-closed in $G_2$. 
        
        Using Lemma \ref{lemma:restriction_p_cyclic} we see that $\prop(R_1)$ is a restriction of $\prop(G_1)$ and, similarly, $\prop(R_2)$ is a restriction of $\prop(G_2)$. It follows by Lemma \ref{lemma:restriction_product} that $\prop(R_1 \times R_2)$ is a restriction of $\prop(G)$. As $R_1 \times R_2$ is a free abelian group on two generators, all its $p$-isolated cyclic subgroups are $p$-closed in $R_1 \times R_2$ by Proposition \ref{lemma:free_abelian_p-CSS}. Indeed, following Remark \ref{remark:isolation_hereditary} we see that $\langle g \rangle$ is $p$-isolated in $R_1 \times R_2$. As $\prop(R_1 \times R_2)$ is a restriction of $\prop(G)$, we get that $\langle g \rangle$ is $p$-closed in $G$.
        
        Therefore $G$ is also $p$-CSS.
    \end{proof}
    
    It is natural to ask whether the assumption of direct factors belonging to the class $\Ups$ is necessary. 
    \begin{question}
       Is the class of $p$-CSS groups closed under forming direct products? 
    \end{question}

    The following was proved in \cite{bobr}.
    \begin{theorem}
        \label{theorem:bobrovskii_p}
    Let $G = G_1 \ast_R G_2$ be an amalgam over a common retract, let $g \in G$ be a $p$-isolated element in $G$, and suppose that $G_1$ and $G_2$ are residually $p$-finite. Then $\langle g \rangle$ is not $p$-separable in $G$ if and only if $g$ is conjugate to some $g_i \in G_i$, where $i \in \{1,2\}$, and $\langle g_i \rangle$ is not $p$-separable in $G_i$.
    \end{theorem}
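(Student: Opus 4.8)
The plan is to mirror the proof of the profinite analogue, Theorem \ref{theorem:bobrovskii}, replacing finite quotients by finite $p$-quotients and using the $p$-isolation hypothesis exactly where the profinite argument needs no extra input. Throughout I work with the decomposition $G \cong (K_1 \ast K_2)\rtimes R$ of \eqref{eq_normal_form} and the normal form of Lemma \ref{lemma:nft for amalgams over retracts}, and I freely replace $g$ by a conjugate, since conjugation is a homeomorphism of $\prop(G)$ by Lemma \ref{lemma:continuous}; in particular I may assume $g$ is cyclically reduced.

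The reverse implication is elementary and needs neither the retract structure nor residual $p$-finiteness of $G$. Suppose $g$ is conjugate to $g_i \in G_i$ with $\langle g_i\rangle$ not $p$-separable in $G_i$; after conjugating, assume $g = g_i$. Then there is $f \in G_i \setminus \langle g_i\rangle$ for which the pair $(f, \langle g_i\rangle)$ is $p$-inseparable in $G_i$. For any $M \in \Np(G)$ the subgroup $M \cap G_i$ lies in $\Np(G_i)$, since $G_i/(M\cap G_i)$ embeds into the finite $p$-group $G/M$; hence $f \in \langle g_i\rangle(M\cap G_i) \subseteq \langle g_i\rangle M$, so $\pi_M(f)\in\langle\pi_M(g)\rangle$. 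Thus the same $f$ witnesses that $\langle g\rangle$ is not $p$-separable in $G$.

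For the forward implication I argue the contrapositive: assuming that every conjugate of $g$ lying in a factor generates a $p$-separable cyclic subgroup there (vacuously true if $g$ is not conjugate into a factor), I show $\langle g\rangle$ is $p$-separable in $G$. First I record that $G$ is itself residually $p$-finite; this is where I would cite (or reprove, via the free pro-$p$ product $\widehat{(K_1)}_p \amalg_p \widehat{(K_2)}_p$, into which $K_1 \ast K_2$ embeds) the fact that amalgams over a retract of residually $p$-finite groups are residually $p$-finite. The argument then splits according to the action of $g$ on the Bass--Serre tree of the splitting. If $g$ is elliptic, i.e. conjugate to some $g_i \in G_i$, I use that $G_i$ is a retract of $G$: by Lemma \ref{lemma:retract_restriction} the topology $\prop(G_i)$ is a restriction of $\prop(G)$, so $\langle g_i\rangle$ being $p$-closed in $G_i$ (our standing assumption) forces it to be $p$-closed in $G$, whence $\langle g\rangle$ is $p$-separable in $G$ by Lemma \ref{lemma:continuous}. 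This uniform argument subsumes all $f$, including those with $\rho_i(f)\in\langle g_i\rangle$, which is exactly the case where a direct quotient construction is awkward.

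The substantial case, and the main obstacle, is when $g$ is hyperbolic, so that after conjugation $g$ is cyclically reduced with $|g|_* \geq 2$ and $|g^n|_* = |n|\,|g|_*$ by (the argument of) Lemma \ref{lemma:amalgam_CR_powers}. Here I must produce, for each $f \notin \langle g\rangle$, a finite $p$-quotient $\pi\colon G \to Q$ with $\pi(f)\notin\langle\pi(g)\rangle$. The cleanest route is via completions: $\langle g\rangle$ is $p$-separable in $G$ exactly when $\langle g\rangle = G\cap\overline{\langle g\rangle}$, the intersection of $G$ with the closure of $\langle g\rangle$ in $\widehat{G}_p$. Since the factors embed into the free pro-$p$ product, $\overline{\langle g\rangle}$ is a procyclic pro-$p$ group isomorphic to $\mathbb{Z}_p$, and the only way $G\cap\overline{\langle g\rangle}$ could be strictly larger than $\langle g\rangle$ would be for some $q$-th root of a power of $g$, with $q\neq p$, to already lie in $G$ --- which is precisely what $p$-isolation of $g$ forbids (compare Example \ref{example:p_inseparable_pair}). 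Making this rigorous is the crux: one must control the normal form of the elements of $G\cap\overline{\langle g\rangle}$, which amounts to a length- and syllable-tracking argument in a finite $p$-quotient that faithfully records the reduced expression of $g^n$ up to the relevant bound. I expect this bookkeeping --- choosing an $R$-invariant co-$p$ subgroup of $K_1\ast K_2$ together with a co-$p$ subgroup of $R$ so that the lengths $|g^n|_*$ are preserved and $p$-isolation prevents $f$ from becoming a power of $g$ --- to be the technically demanding step, exactly as in the hyperbolic case of the profinite Theorem \ref{theorem:bobrovskii}.
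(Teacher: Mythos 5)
The paper does not actually prove this statement: like Theorem \ref{theorem:bobrovskii}, it is imported wholesale from Bobrovskii and Sokolov \cite{bobr} (``The following was proved in \cite{bobr}''), so there is no in-paper argument to measure your proposal against. Judged on its own terms, your reverse implication is complete and correct, and your elliptic case is fine modulo the fact --- which you only assert, and which does require a citation or a proof --- that an amalgam of residually $p$-finite groups over a common retract is again residually $p$-finite; this is needed before Lemma \ref{lemma:retract_restriction} can be invoked.

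The genuine gap is the hyperbolic case, which you explicitly leave as ``the technically demanding step'', and the heuristic you offer for why it should work is circular. Granting that $\overline{\langle g\rangle}\cong\mathbb{Z}_p$ in the pro-$p$ completion (itself not free: it presupposes that $g$ has infinite order in $\widehat{G}_p$), an element $f\in G\cap\overline{\langle g\rangle}$ is a $\mathbb{Z}_p$-power $g^\lambda$; but for $\lambda\in\mathbb{Z}_p\setminus\mathbb{Z}$ this yields no algebraic relation $f^m=g^n$ inside the discrete group $G$, so the $p$-isolation hypothesis cannot be applied to rule such an $f$ out. Your claim that ``the only way the intersection could be strictly larger is via a $q$-th root of a power of $g$'' is precisely the assertion that a $p$-inseparable pair $(f,\langle g\rangle)$ forces $f$ to be a root of a power of $g$ --- which is the conclusion of the theorem, not an input (compare Lemma \ref{lemma:p_inseparable_pair}, which the paper deduces \emph{from} the $p$-CSS property, not the other way around). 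The paper's own observation that free groups contain finitely generated $p$-isolated subgroups that are not $p$-separable \cite{bardakov} shows that isolation alone does not control pro-$p$ closures. The actual content of the theorem --- producing, for a hyperbolic $g$ and $f\notin\langle g\rangle$, compatible co-$p$ filtrations of $K_1\ast K_2$ and $R$ giving a finite $p$-quotient that records enough of the normal form of $g^n$ to exclude $f$ --- is exactly the part you have not supplied.
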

    
    The following three statements provide a pro-$p$ analogue of Lemma \ref{remark:shortening}, Lemma \ref{lemma:gp_separable_elements} and Corollary \ref{corollary:separable_elements}, respectively. We omit the proofs, as they are more-or-less analogous.
    \begin{lemma}
    \label{remark:shortening_p}
    Let $G = \Gamma \mathcal{G}$ be a graph product of residually $p$-finite groups and let $g \in G$ be arbitrary. Then the cyclic subgroup $\langle g \rangle \leq G$ is $p$-separable in $G$ if and only if it is $p$-separable in $G_S$, where $S = \supp(g)$. Furthermore, $\langle g \rangle$ is $p$-separable in $G$ if and only if $\langle g' \rangle$ is $p$-separable in $G$ for some (and hence for all) $g' \in g^G$.
\end{lemma}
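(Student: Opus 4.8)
The plan is to mirror the proof of Lemma \ref{remark:shortening} verbatim, replacing the profinite topology by the pro-$p$ topology throughout and taking $\mathcal{C}$ to be the class of all finite $p$-groups. The only genuinely new input is that a graph product of residually $p$-finite groups is again residually $p$-finite. In contrast with the profinite setting, this does \emph{not} follow from Green's \cite[Corollary 5.4]{green}, but instead from \cite[Theorem A]{bf}, which guarantees that the class of residually $\mathcal{C}$ groups is closed under forming graph products when $\mathcal{C}$ is the extension-closed pseudovariety of all finite $p$-groups. Establishing this residual $p$-finiteness is the step that is not purely formal, since it is precisely the hypothesis under which the restriction results of Section \ref{section:proC_topologies} become applicable.

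Granting that $G$ is residually $p$-finite, I would first handle the support reduction. The full subgroup $G_S$ is a retract of $G$, so Lemma \ref{lemma:retract_restriction}, applied with $\mathcal{C}$ the class of finite $p$-groups, shows that $\prop(G_S)$ is a restriction of $\prop(G)$. By the definition of restriction, a subset of $G_S$ is $p$-closed in $G_S$ if and only if it is $p$-closed in $G$; since $\langle g\rangle \leq G_S$ when $S = \supp(g)$, this yields that $\langle g\rangle$ is $p$-separable in $G$ exactly when it is $p$-separable in $G_S$.

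For the conjugacy-invariance clause I would argue as in Lemma \ref{remark:shortening}. Given $g' = \phi(g)$ for some inner automorphism $\phi \in \Inn(G)$, one has $\langle g'\rangle = \phi(\langle g\rangle)$, and Lemma \ref{lemma:continuous} shows that $\phi$ is a homeomorphism of $\prop(G)$, the class of finite $p$-groups satisfying (c1) and (c2). A homeomorphism sends $p$-closed sets to $p$-closed sets, so $\langle g\rangle$ is $p$-separable in $G$ if and only if $\langle g'\rangle$ is; since every element of $g^G$ arises as $\phi(g)$ for a suitable inner automorphism, the stated equivalence holds for all $g' \in g^G$.

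The main obstacle is exactly the residual $p$-finiteness input flagged above: it is the one place where the profinite proof cannot be copied word for word, Green's corollary being insufficient in the pro-$p$ world. Once \cite[Theorem A]{bf} supplies it, every remaining step is a transcription of the profinite argument, with $\PT$ replaced by $\prop$.
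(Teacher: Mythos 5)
Your proposal is correct and matches the paper's intent exactly: the paper omits the proof of this lemma, stating only that it is ``more-or-less analogous'' to that of Lemma \ref{remark:shortening}, and your transcription—retract restriction via Lemma \ref{lemma:retract_restriction} and conjugacy invariance via Lemma \ref{lemma:continuous}—is precisely that analogue. You also correctly isolate the one non-formal ingredient, namely that residual $p$-finiteness of the graph product must come from \cite[Theorem A]{bf} rather than Green's corollary.
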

\begin{lemma}\label{lemma:gp_separable_p_elements}
    Let $G = \Gamma \mathcal{G}$ be a graph product of residually $p$-finite groups and let $g \in G$ be a cyclically reduced element such that the full subgraph $\Gamma_S$ contains a separating subset, where $S = \supp(g)$. Then the cyclic subgroup $\langle g \rangle \leq G$ is $p$-separable in $G$.
\end{lemma}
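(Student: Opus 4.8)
The plan is to follow the proof of Lemma \ref{lemma:gp_separable_elements} line by line, replacing each profinite ingredient with its pro-$p$ counterpart. First I would apply Lemma \ref{remark:shortening_p} to reduce to the case $S = \supp(g) = V\Gamma$, so that $\Gamma = \Gamma_S$ and $\langle g\rangle$ is $p$-separable in $G$ if and only if it is $p$-separable in $G_S$. This reduction is exactly what Lemma \ref{remark:shortening_p} provides, and it relies on the fact that a graph product of residually $p$-finite groups is again residually $p$-finite (\cite{bf}), so that the full subgroup $G_S$ is a retract whose pro-$p$ topology restricts that of $G$.

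Next, since $\Gamma = \Gamma_S$ contains a separating subset $C$, the graph is not complete, and Lemma \ref{lemma:split} lets me write $G = G_A \ast_{G_C} G_B$ as an amalgam over the common retract $G_C$, where $A = A' \cup C$ and $B = B' \cup C$ arise from grouping the connected components of $\Gamma \setminus C$. Because $\supp(g) = V\Gamma$ meets both $A'$ and $B'$, the full subgroups $G_A$ and $G_B$ are proper. Being subgroups of the residually $p$-finite group $G$, the factors $G_A$ and $G_B$ are themselves residually $p$-finite, so the hypotheses of Theorem \ref{theorem:bobrovskii_p} are satisfied.

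The key step is then to rule out the only obstruction to $p$-separability permitted by Theorem \ref{theorem:bobrovskii_p}, namely that $g$ be conjugate to an element of one of the factors $G_A$ or $G_B$. Here I would use that $g$ is cyclically reduced with $\supp(g) = V\Gamma$: by Lemma \ref{lemma:cr_are_small} a cyclically reduced element has minimal length and maximal support in its conjugacy class, so no conjugate of $g$ can lie in a proper full subgroup such as $G_A$ or $G_B$. Theorem \ref{theorem:bobrovskii_p} then forces $\langle g\rangle$ to be $p$-separable in $G$, which is what we want.

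The main obstacle is the $p$-isolation hypothesis of Theorem \ref{theorem:bobrovskii_p}: unlike in the profinite setting, this must be carried along throughout, since it is precisely what is needed to invoke the pro-$p$ Bobrovskii--Sokolov theorem and, moreover, it is genuinely necessary for the conclusion (for instance $(a^2b)^2$ in $\mathbb{Z}\ast\mathbb{Z}$ is cyclically reduced with full support, yet $\plog=2$ makes its cyclic subgroup fail to be $p$-separable for every odd prime $p$). I would therefore read the statement with $g$ assumed $p$-isolated; $p$-isolation passes to $G_S$ under the reduction of Lemma \ref{remark:shortening_p} and is exactly the property consumed at the final application of Theorem \ref{theorem:bobrovskii_p}.
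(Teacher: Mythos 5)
Your proposal is correct and is essentially the proof the paper intends: the authors omit it, stating only that it is analogous to Lemma \ref{lemma:gp_separable_elements}, and your line-by-line translation (Lemma \ref{remark:shortening_p} for the reduction to $G_S$, Lemma \ref{lemma:split} for the splitting $G = G_A \ast_{G_C} G_B$, Lemma \ref{lemma:cr_are_small} to rule out conjugates into the proper factors, and Theorem \ref{theorem:bobrovskii_p} in place of Theorem \ref{theorem:bobrovskii}) is exactly that argument. Your remark about $p$-isolation is also well taken: as printed, the lemma omits the hypothesis that $\langle g \rangle$ be $p$-isolated, which Theorem \ref{theorem:bobrovskii_p} requires and without which the conclusion fails --- for instance $g = (ab)^2$ in $\mathbb{Z} \ast \mathbb{Z}$ is cyclically reduced with full support, and the empty set separates the two vertices, yet $\langle g \rangle$ is not $p$-separable for any odd prime $p$ by Example \ref{example:p_inseparable_pair}. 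Since the lemma is only ever applied (via Corollary \ref{corollary:separable_elements_p} in the proof of Theorem \ref{proposition:GP_p-CSS}) to generators of $p$-isolated cyclic subgroups, adding the hypothesis costs nothing downstream, and your observation that $p$-isolation survives the reduction to $G_S$ is precisely the one point that needs checking beyond the profinite template.
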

\begin{corollary}
    \label{corollary:separable_elements_p}
    Let $\Gamma$ be a graph, $\mathcal{G} = \{G_v \mid v \in V\Gamma\}$ be a family of residually $p$-finite groups and $G = \Gamma \mathcal{G}$ be the corresponding graph product. Suppose that $g \in G$ is an arbitrary element such that $\Gamma_S$ contains a separating subset, where $S = \esupp(g)$. Then the cyclic subgroup $\langle g \rangle \leq G$ is $p$-separable in $G$.
\end{corollary}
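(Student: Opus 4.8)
The plan is to derive Corollary \ref{corollary:separable_elements_p} by combining the two immediately preceding statements, exactly mirroring how Corollary \ref{corollary:separable_elements} is obtained from Lemma \ref{lemma:gp_separable_elements} and Lemma \ref{remark:shortening} in the profinite setting. The essential support $\esupp(g)$ is defined as $\supp(g')$ for some cyclically reduced $g' \in g^G$, so the very first step is to fix such a cyclically reduced representative $g'$ of the conjugacy class, noting that $\supp(g') = \esupp(g) = S$.

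Having made this reduction, the hypothesis says that $\Gamma_S$ contains a separating subset, where now $S = \supp(g')$ and $g'$ is cyclically reduced. This is precisely the hypothesis of Lemma \ref{lemma:gp_separable_p_elements}, which immediately yields that the cyclic subgroup $\langle g' \rangle$ is $p$-separable in $G$. I would then invoke the conjugacy-invariance clause of Lemma \ref{remark:shortening_p}: since $g' \in g^G$, the subgroup $\langle g \rangle$ is $p$-separable in $G$ if and only if $\langle g' \rangle$ is, and the latter has just been established.

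I do not anticipate any genuine obstacle here, as the result is a formal consequence of the preceding two lemmas; the content has already been carried by Lemma \ref{lemma:gp_separable_p_elements} (via Theorem \ref{theorem:bobrovskii_p} and the splitting of Lemma \ref{lemma:split}). The only point requiring minor care is the transfer of $p$-separability along the conjugacy $g \mapsto g'$, which rests on inner automorphisms being homeomorphisms of the pro-$p$ topology; this in turn follows from Lemma \ref{lemma:continuous} together with the class of finite $p$-groups being closed under subgroups and finite direct products, and it is already packaged inside the \emph{furthermore} part of Lemma \ref{remark:shortening_p}. Thus the corollary follows immediately, which justifies the authors' remark that the proof is analogous to the profinite case and may be omitted.
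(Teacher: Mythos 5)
Your proof is correct and follows exactly the route the paper intends: pass to a cyclically reduced conjugate $g'$ with $\supp(g')=\esupp(g)$, apply Lemma \ref{lemma:gp_separable_p_elements} to $g'$, and transfer $p$-separability back to $\langle g\rangle$ via the conjugacy-invariance clause of Lemma \ref{remark:shortening_p}. This is precisely the pro-$p$ analogue of how Corollary \ref{corollary:separable_elements} is deduced, which is what the authors mean when they omit the proof as ``more-or-less analogous.''
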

    
With all these, we can prove Theorem \ref{proposition:GP_p-CSS}.
\begin{thmB}
For every prime number $p$, the class of $p$-CSS groups in $\Ups$ is closed under forming graph products.
\end{thmB}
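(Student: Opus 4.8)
The plan is to follow the strategy of the profinite proof of Theorem \ref{proposition:GP_CSS}, replacing each ingredient by its pro-$p$ counterpart. Write $G = \Gamma\mathcal{G}$ with every vertex group $G_v$ both $p$-CSS and in $\Ups$. By Theorem \ref{lemma:primitive_urp_gp} we already know $G \in \Ups$, so only the $p$-separability of $p$-isolated cyclic subgroups remains to be checked. The first observation is that each vertex group is residually $p$-finite: groups in $\Ups$ are torsion-free, so the trivial subgroup is automatically $p$-isolated, and hence $p$-separable because $G_v$ is $p$-CSS. Thus $G$ is a graph product of residually $p$-finite groups, and every pro-$p$ lemma of this section applies.

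Now fix $g \in G$ with $\langle g\rangle$ $p$-isolated; I must show that $\langle g\rangle$ is $p$-separable. Since $p$-isolation is invariant under conjugation (conjugation being an automorphism of $G$), I may replace $g$ by a cyclically reduced conjugate $g' \in g^G$ with $\supp(g') = \esupp(g) =: S$. By Lemma \ref{remark:shortening_p} it then suffices to prove that $\langle g'\rangle$ is $p$-separable in the full subgroup $G_S$, and by Remark \ref{remark:isolation_hereditary} the subgroup $\langle g'\rangle$ is still $p$-isolated in $G_S$. As $S$ is finite, this reduces the entire statement to the case of a finite graph.

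The argument then splits according to $\Gamma_S$. If $\Gamma_S$ is not complete it contains a separating set, and Corollary \ref{corollary:separable_elements_p} applies to the $p$-isolated, cyclically reduced element $g'$ of full support: concretely, writing $G_S$ as an amalgam over a retract via Lemma \ref{lemma:split}, Theorem \ref{theorem:bobrovskii_p} together with Lemma \ref{lemma:cr_are_small} shows that $g'$ cannot be conjugated into a proper factor, forcing $\langle g'\rangle$ to be $p$-separable. If instead $\Gamma_S$ is complete, then $G_S$ is a finite direct product of the groups $G_v$ with $v \in S$, each $p$-CSS and in $\Ups$; iterating Lemma \ref{lemma:direct_pCSS} shows that $G_S$ is itself $p$-CSS, whence the $p$-isolated subgroup $\langle g'\rangle$ is $p$-separable in $G_S$. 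In either case $\langle g'\rangle$ is $p$-separable in $G_S$, and therefore $\langle g\rangle$ is $p$-separable in $G$.

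The main obstacle, and the essential difference from the profinite argument, is that Theorem \ref{theorem:bobrovskii_p} controls only $p$-isolated cyclic subgroups, whereas its profinite counterpart Theorem \ref{theorem:bobrovskii} is unconditional. Consequently one must propagate the $p$-isolation hypothesis through every reduction---conjugation, passage to the full subgroup $G_S$, and (in an inductive reading via Theorem \ref{general:GP_CSS}) passage to the amalgam factors---so that at each stage the relevant cyclic subgroup is still $p$-isolated and the factors are still $p$-CSS. The conjugation-invariance of $p$-isolation and the hereditary Remark \ref{remark:isolation_hereditary} are exactly what make this bookkeeping go through, while the complete case genuinely requires the closure under direct products inside $\Ups$ supplied by Lemma \ref{lemma:direct_pCSS}.
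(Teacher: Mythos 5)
Your proof is correct and follows essentially the same route as the paper, which simply declares the argument ``analogous'' to the profinite case and lists Theorem \ref{lemma:primitive_urp_gp}, Lemma \ref{remark:shortening_p}, Lemma \ref{lemma:gp_separable_p_elements}, Corollary \ref{corollary:separable_elements_p} (and, implicitly, Lemma \ref{lemma:direct_pCSS}) as the substitute ingredients. You supply the details the paper leaves implicit --- in particular the observation that torsion-freeness of groups in $\Ups$ yields residual $p$-finiteness of the vertex groups, and the careful propagation of $p$-isolation through conjugation and restriction to $G_S$, which is exactly the extra bookkeeping the pro-$p$ setting requires.
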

\begin{proof}
    The proof of Theorem \ref{proposition:GP_p-CSS} is analogous to the proof of Theorem \ref{proposition:GP_CSS}, modulo the use of Theorem \ref{lemma:primitive_urp_gp}, Lemma \ref{remark:shortening_p}, Lemma \ref{lemma:gp_separable_p_elements}, and Corollary~\ref{corollary:separable_elements_p}.
\end{proof}
The first part of the statement of Corollary \ref{corollary:raags_pcss} follows immediately by applying Theorem \ref{proposition:GP_p-CSS} to graph products of infinite cyclic groups. The second part of the statement follows by Lemma \ref{lemma:characterisation_of_p-isolation}.

\section{$p$-isolated elements of graph products}
\label{section:p-isolated_elements}
The aim of this section is to give a full characterisation of $p$-isolated elements in graph products of groups in $\Ups$.

\subsection{Irreducible factorisations in graph products of groups}
In this subsection we describe a canonical way to factorise elements in graph products into pairwise commuting factors that was introduced in \cite{mf} as the P-S decomposition. 

Let $g \in G=\Gamma\mathcal{G}$ be an element in a graph product. We define $S(g) = \supp(g) \cap \star(\supp(g))$, where the star of a subset of vertices $A\subseteq V$ is defined as
$\star (A)=\cap_{v\in A}\star (v)$.
Similarly, we define $P(g) = \supp(g) \setminus S(g)$. The element $g$ uniquely factorises as a reduced product $g = s(g) p(g)$, where $\supp(s(g)) = S(g)$ and $\supp(p(g)) = P(g)$. We call this factorisation the P-S decomposition of $g$.

Given a graph $\Gamma$, we consider the \emph{complement graph}  $\overline{\Gamma}$ of $\Gamma$, which is defined by $V\overline{\Gamma} = V\Gamma$ and $E\overline{\Gamma} = \binom{V\Gamma}{2} \setminus E\Gamma$. We say that a graph $\Gamma$ is \emph{irreducible} if $\overline{\Gamma}$ is connected, otherwise we say that $\Gamma$ is reducible. Suppose that $\overline{\Gamma}$ can be split into a collection of disjoint connected components $C = \left\{\overline{\Gamma}_1, \overline{\Gamma}_2, \dots \right\}$, then the corresponding collection of full subgraphs $I = \left\{ \Gamma_1, \Gamma_2, \dots \right\}$ of $\Gamma$ is called the \emph{irreducible decomposition} of $\Gamma$ and the members of $I$ are called \emph{irreducible components} of $\Gamma$.
    
Suppose that $\Gamma$ is a graph with at least two vertices and $\mathcal{G} = \{G_v \mid v \in V\Gamma\}$ is a family of non-trivial groups. If the graph is reducible, then the corresponding graph product $G = \Gamma \mathcal{G}$ splits as a direct product $G = \prod_{i \in I}G_i$ where $i$ ranges over the collection of irreducible components of $\Gamma$. Conversely, it was shown by Minasyan and Osin \cite{osin} that if the graph $\Gamma$ is irreducible, then the corresponding graph product is an acylindrically hyperbolic group.
    
Let $g$ be a non-trivial element of the graph product $\Gamma\mathcal{G}$, and consider the full subgraph $\Gamma_S \leq \Gamma$, where $S = \supp(g)$. Let $\left\{\Gamma_1, \dots, \Gamma_d\right\}$ be the irreducible decomposition of $\Gamma_S$, and let $G_1, \dots, G_d \leq G$ be the corresponding full subgroups. Then for every $1 \leq i \leq d$ there is a uniquely given $g_i \in G_i$ such that $g = g_1 \dots g_d$. We refer to this as the \emph{irreducible factorisation} of $g$, and we call the individual elements $g_i$ the \emph{irreducible factors} of $g$. If $d = 1$, i.e. $g$ has only one irreducible factor, then we say that $g$ is \emph{irreducible}. Note that $p(g)$ is the product of all the irreducible factors of lenght at least two and $s(g)$ is exactly the product of all irreducible factors of length one. In particular, if $g$ is irreducible and $|g| > 1$ then $g = p(g)$. This observation allows us to prove the following lemma.

\begin{lemma}\label{lemma:irreducible_element}
    Let $G = \Gamma \mathcal{G}$ be a graph product and let $g \in G$ be irreducible and cyclically reduced. If $|g| > 1$ then $\lvert g^n\rvert = \lvert n\rvert \cdot \lvert g\rvert$ for every $n \in \mathbb{Z}$.
\end{lemma}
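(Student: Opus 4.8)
The plan is to prove that for $n\geq 1$ the power $g^n$ is precisely the reduced product of $n$ copies of $g$, so that $\lvert g^n\rvert = n\lvert g\rvert$; the case $n=0$ is immediate, and the negative case reduces to the positive one because $g^{-1}$ is again irreducible and cyclically reduced with $\supp(g^{-1})=\supp(g)$ and $\lvert g^{-1}\rvert=\lvert g\rvert$, whence $\lvert g^n\rvert=\lvert (g^{-1})^{\lvert n\rvert}\rvert=\lvert n\rvert\lvert g\rvert$. So I would first reduce to $n\geq 1$. The crux of the whole argument is the claim that $\FL(g)\cap\LL(g)=\emptyset$; granting it, everything else is a short induction.

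A preliminary observation I would record is that $S(g):=\supp(g)\cap\star(\supp(g))$ is empty. Indeed, since $\lvert g\rvert>1$ a reduced word for $g$ must involve at least two distinct vertex groups, so $\lvert\supp(g)\rvert\geq 2$; and a vertex $v\in S(g)$ would be adjacent in $\Gamma$ to every other vertex of $\supp(g)$, hence isolated in $\overline{\Gamma_{\supp(g)}}$, which is impossible in a connected graph on at least two vertices. This is exactly the statement $g=p(g)$ noted before the lemma, recovered directly from irreducibility.

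Granting $\FL(g)\cap\LL(g)=\emptyset$, I would run an induction showing simultaneously that $g^n=g^{n-1}\cdot g$ is a reduced product and that $\LL(g^n)=\LL(g)$. Using the characterisation from Section~\ref{section:graph_products} that $xy$ is a reduced product if and only if $\LL(x)\cap\FL(y)=\emptyset$, reducedness of $g^{n-1}\cdot g$ follows from the inductive hypothesis $\LL(g^{n-1})=\LL(g)$ together with the claim, giving $\lvert g^n\rvert=\lvert g^{n-1}\rvert+\lvert g\rvert=n\lvert g\rvert$. To propagate $\LL(g^n)=\LL(g)$, the inclusion $\supseteq$ is clear, and for $\subseteq$ I would note that any syllable from an earlier copy of $g$ that could be shuffled to the very end of $g^n$ must commute with the whole final copy, hence its vertex is adjacent to all of $\supp(g)$ and thus lies in $S(g)=\emptyset$, a contradiction.

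The main obstacle is the claim $\FL(g)\cap\LL(g)=\emptyset$, which I would prove by contradiction. Suppose $v\in\FL(g)\cap\LL(g)$ and fix a reduced word $W\equiv(g_1,\dots,g_k)$ for $g$ with $k=\lvert g\rvert\geq 2$; by definition there is a syllable $g_i\in G_v$ shufflable to the front (so $v$ is adjacent to $v_1,\dots,v_{i-1}$) and a syllable $g_j\in G_v$ shufflable to the back (so $v$ is adjacent to $v_{j+1},\dots,v_k$). If $i=j$, this single syllable commutes with all the others, forcing $v\in S(g)=\emptyset$. If $i\neq j$, say $i<j$, the two shuffles act on the disjoint position ranges $<i$ and $>j$ and so can be performed together, producing via Theorem~\ref{nft} a reduced word for $g$ whose first and last syllables both lie in $G_v$; the cyclic permutation that brings the last syllable next to the first is then not reduced, contradicting that $g$ is cyclically reduced. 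The delicate point, where I expect to spend the most care, is verifying that the two shuffles are genuinely compatible and that the resulting word is reduced with two distinct $G_v$-syllables at its ends, so that the cyclic-reduction obstruction really applies.
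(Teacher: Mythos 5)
Your proof is correct and takes essentially the same route as the paper's: both reduce the statement to the key claim $\FL(g)\cap\LL(g)=\emptyset$ (valid because $g=p(g)$ and $g$ is cyclically reduced with $|g|>1$), deduce that syllables from distinct copies of $g$ in $g^n$ cannot be joined, and handle negative $n$ via $\LL(f^{-1})=\FL(f)$. The only difference is that the paper imports the key claim from \cite[Lemma 3.11]{mf}, whereas you prove it from scratch; your direct argument, and the bookkeeping $\LL(g^n)=\LL(g)$ in the induction, are sound.
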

\begin{proof}
    The claim is clear if $n=0$. Suppose that $g$ is an irreducible element, and that $n$ is positive. By \cite[Lemma 3.11]{mf} we see that $\FL(g) \cap \LL(g) = \emptyset$ as $p(g) = g$ and $g$ is cyclically reduced with $|g| > 1$. 
    Hence, it is not possible to join together any pair of syllables belonging to consecutive appearances of $g$ in $g^n$.
    Therefore $\lvert g^n\rvert = n \cdot\lvert g\rvert$ for all $n\geq 0$.
    
    For $n$ negative, we notice that $\LL(f^{-1}) = \FL(f)$ and $\lvert f\rvert = \lvert f^{-1}\rvert $ for every $f \in G$.
\end{proof}

\begin{corollary}
    Let $g \in G$ be irreducible and cyclically reduced. If $|g| > 1$ then $\ord(g) = \infty$. 
\end{corollary}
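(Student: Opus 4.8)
The plan is to read this off directly from Lemma \ref{lemma:irreducible_element}, since the corollary is stated under exactly the same hypotheses on $g$ (irreducible, cyclically reduced, with $|g| > 1$). First I would apply that lemma to conclude that $\lvert g^n \rvert = \lvert n \rvert \cdot \lvert g \rvert$ for every $n \in \mathbb{Z}$. The only remaining task is to translate a statement about lengths into a statement about orders, and for this the crucial input is the Normal Form Theorem (Theorem \ref{nft}): a reduced word represents the trivial element if and only if it is empty, so the identity $e$ is the unique element of $G$ of length $0$.

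Concretely, I would argue as follows. Fix any $n \in \mathbb{Z} \setminus \{0\}$. By the length formula from Lemma \ref{lemma:irreducible_element} we have
\[
\lvert g^n \rvert = \lvert n \rvert \cdot \lvert g \rvert \geq \lvert g \rvert > 1 > 0,
\]
using $\lvert n \rvert \geq 1$ and the hypothesis $\lvert g \rvert > 1$. Since $\lvert g^n \rvert > 0$, the Normal Form Theorem forces $g^n \neq e$. As this holds for every nonzero integer $n$, no nonzero power of $g$ is trivial, which is precisely the assertion that $\ord(g) = \infty$.

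I do not expect any genuine obstacle here: all the real work has already been done in Lemma \ref{lemma:irreducible_element}, whose proof handled the interaction of $\FL(g)$, $\LL(g)$, and the irreducibility hypothesis to show that powers of $g$ do not shorten. The only point to keep in mind is that the inequality $\lvert g \rvert > 1$ is used merely to guarantee $\lvert g^n \rvert > 0$; in fact $\lvert g \rvert \geq 1$ would already suffice for the conclusion $\ord(g) = \infty$, but since the corollary is phrased under the same hypotheses as the lemma it invokes, there is nothing to optimise. The proof is therefore immediate.
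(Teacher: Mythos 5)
Your proof is correct and is exactly the argument the paper intends (the corollary is stated without proof as an immediate consequence of Lemma \ref{lemma:irreducible_element} plus the Normal Form Theorem). One small caveat on your closing aside: the claim that $|g|\geq 1$ would suffice is misleading, since for $|g|=1$ the length formula of Lemma \ref{lemma:irreducible_element} fails (a torsion syllable of a vertex group is irreducible and cyclically reduced with $|g|=1$ yet has finite order), so the hypothesis $|g|>1$ is genuinely needed and not merely an artefact of matching the lemma's hypotheses.
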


\subsection{$p$-isolated cyclic subgroups}
The aim of this subsection is to characterise the $p$-isolated elements of graph products of $\Ups$-groups. We already proved in Theorem \ref{lemma:primitive_urp_gp} that the class $\Ups$ is closed under forming graph products. Following Lemma \ref{lemma:characterisation_of_p-isolation}, an element is $p$-isolated if and only if its primitive logarithm is a power of $p$, so we only need to describe how to compute primitive logarithms. From now on we assume that $G$ is a graph product of $\Ups$ groups, and in particular that $G\in \Ups$.

\begin{lemma}\label{lemma:irreducible.support}
   Let $g \in G$ be irreducible, cyclically reduced and let $S = \supp(g)$. Then $\plog_G(g) = \plog_{G_S}(g)$. Furthermore, if $|g| > 1$ then $\plog_G(g)$ divides $|g|$.
\end{lemma}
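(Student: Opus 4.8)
The first assertion is essentially immediate from the machinery already developed, so I would dispatch it quickly. Since $\supp(g) = S$, the element $g$ lies in the full subgroup $G_S$, which is a retract of $G$ via the canonical retraction $\rho_S$. Moreover $G_S$ is itself a graph product of groups in $\Ups$, hence $G_S \in \Ups$ (by Theorem \ref{lemma:primitive_urp_gp}) and in particular $G_S$ is primitively stable, while $G \in \U$. The plan is then to invoke the ``Moreover'' part of Lemma \ref{lemma:primitive_roots_retract} with $R = G_S$: as $g \in G_S$, this yields $\sqrt[G]{g} = \sqrt[G_S]{g}$ and $\plog_{G_S}(g) = \plog_G(g)$. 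This also reduces the divisibility claim to the case $S = V\Gamma$, so from now on I would assume that $g$ has full support and that $\Gamma = \Gamma_S$ is irreducible.

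For the divisibility claim, write $r = \sqrt[G]{g}$ and $k = \plog_G(g)$, so that $g = r^k$. First I would pin down the support of $r$: since $\supp(r^k) \subseteq \supp(r)$ and $r \in G_S$, we get $S = \supp(g) \subseteq \supp(r) \subseteq S$, whence $\supp(r) = S$ and $r$ is irreducible. The element $r$ need not be cyclically reduced, so let $r_0 = c r c^{-1}$ be a cyclically reduced conjugate, which exists by \cite[Lemma 3.8]{mf}. The core of the argument is to show that $r_0^k = c g c^{-1}$ is again cyclically reduced and that $\supp(r_0^k) = \supp(r_0)$.

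To prove this — and this is the step I expect to be the main obstacle, since passing to powers can in principle shrink the essential support — I would analyse the irreducible factorisation $r_0 = f_1 \cdots f_d$ of the cyclically reduced element $r_0$, where the $\supp(f_i)$ range over the irreducible components of $\Gamma_{\supp(r_0)}$ and the factors pairwise commute. Each $f_i$ is again cyclically reduced and irreducible, and for each index one of two things happens. If $|f_i| = 1$, then $f_i$ lies in a single vertex group and, the vertex groups being torsion-free, $f_i^k$ is a nontrivial element of that same vertex group with $|f_i^k| = 1$. If $|f_i| > 1$, then Lemma \ref{lemma:irreducible_element} gives $|f_i^k| = k\,|f_i|$, together with $\FL(f_i^k) \cap \LL(f_i^k) = \FL(f_i) \cap \LL(f_i) = \emptyset$. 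In either case $\supp(f_i^k) = \supp(f_i)$, and reassembling the pairwise-commuting factors shows that $r_0^k = f_1^k \cdots f_d^k$ is cyclically reduced with $\supp(r_0^k) = \supp(r_0)$, as wanted.

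Finally I would conclude as follows. Since $r_0^k = c g c^{-1}$ and $g$ are both cyclically reduced and conjugate, \cite[Lemma 3.12]{mf} gives $\supp(r_0) = \supp(r_0^k) = \supp(g) = S$; thus $\Gamma_{\supp(r_0)} = \Gamma_S$ is irreducible and $r_0$ is irreducible. Moreover $|g| = |r_0^k|$ by Corollary \ref{corollary:cr_are_minimal}, and $|r_0| > 1$, for otherwise $r_0$ would lie in a single vertex group, forcing $|r_0^k| = 1$ and hence $|g| = 1$, contrary to $|g| > 1$. Applying Lemma \ref{lemma:irreducible_element} to the irreducible cyclically reduced element $r_0$ then yields $|g| = |r_0^k| = k\,|r_0|$, so that $\plog_G(g) = k$ divides $|g|$, completing the proof.
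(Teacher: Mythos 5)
Your proof is correct and follows the same route as the paper: the first claim via Lemma \ref{lemma:primitive_roots_retract} applied to the retract $G_S$, and the divisibility claim via Lemma \ref{lemma:irreducible_element} applied to the primitive root. The paper's own proof is essentially a one-line invocation of Lemma \ref{lemma:irreducible_element}, leaving implicit the verification that the root (after conjugating to a cyclically reduced representative) is itself irreducible, of length greater than one, and of the same support as $g$; your write-up supplies exactly those details.
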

\begin{proof}
The first part of the statement follows from Lemma \ref{lemma:primitive_roots_retract}.

Suppose that $\lvert g\rvert>1$. If $\plog_G(g)=1$ then it divides $\lvert g\rvert$. On the other hand, consider the case when $\plog_G(g)>1$. As $g$ is irreducible, cyclically reduced, and $g=r^{\plog_{G}(g)}$ where $r$ is its root, it must necessarily be that $\plog_G(g)$ divides $\lvert g\rvert$, by Lemma \ref{lemma:irreducible_element}.
\end{proof}

The previous lemma provides an informal algorithm to compute primitive logarithms and primitive roots of cyclically reduced irreducible elements (provided we can solve the word problem in every vertex group): given a reduced word $W_g = (g_1, \dots, g_n)$, where $g_i \in G_{v_i}$ for $i = 1, \dots, n$ and $v_i \in V\Gamma$, by shuffling we can construct the set of all reduced words representing $g$, denote it by $\mathcal{W}_g$, and then (in an increasing order) for every divisor $d$ of $n$ we can check whether there is $W \in \mathcal{W}_g$ with a prefix $W_0$ of length $n/d$ such that $W_0^d =_G W_g$.

\begin{lemma}\label{lemma_last}
   Let $g \in G$ be cyclically reduced and let $g = g_1 \dots g_s$ be its irreducible factorisation. Denote $S_i = \supp(g_i)$ for $i = 1,\dots, s$ and $S = \supp(g)$. Then
   \begin{displaymath}
       \plog_G(g) = \plog_{G_S}(g) = \gcd\left(\plog_{G_{S_1}}(g_1), \dots, \plog_{G_{S_s}}(g_s)\right).
   \end{displaymath}
\end{lemma}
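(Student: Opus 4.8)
The plan is to chain together two facts already established in the excerpt: that primitive logarithms are preserved under retracts, and that they interact with direct products via greatest common divisors. Neither equality will require cyclic reducedness in an essential way; that hypothesis only serves to guarantee that the irreducible factors are non-trivial.

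First I would dispatch the equality $\plog_G(g) = \plog_{G_S}(g)$. Since $G_S$ is a full subgroup of $G$ it is a retract of $G$, and it lies in $\Ups$ (for instance as a subgroup of $G \in \Ups$ via Lemma \ref{lemma:primitive_stability_subgroups}, or directly as a graph product of $\Ups$-groups by Theorem \ref{lemma:primitive_urp_gp}). As $g \in G_S$ and the associated retraction fixes $g \neq e$, the equality $\plog_G(g) = \plog_{G_S}(g)$ is exactly the second conclusion of Lemma \ref{lemma:primitive_roots_retract}. This is the first assertion of Lemma \ref{lemma:irreducible.support}, now used for an arbitrary element rather than an irreducible one.

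Next I would analyse $G_S$ through its irreducible decomposition. Writing $\{\Gamma_1, \dots, \Gamma_s\}$ for the irreducible decomposition of $\Gamma_S$, distinct irreducible components are connected components of the complement graph $\overline{\Gamma_S}$, hence are joined by every possible edge in $\Gamma_S$; this forces the corresponding full subgroups to commute, so $G_S \cong G_{S_1} \times \cdots \times G_{S_s}$ as recalled in the discussion preceding Lemma \ref{lemma:irreducible_element}. Under this identification the irreducible factorisation $g = g_1 \cdots g_s$ corresponds to the tuple $(g_1, \dots, g_s)$, and each $g_i$ is non-trivial since $\supp(g_i) = S_i \neq \emptyset$; in particular every $\plog_{G_{S_i}}(g_i)$ is defined. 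Each factor $G_{S_i}$ is itself a graph product of $\Ups$-groups, hence lies in $\Ups$.

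Finally I would invoke the direct-product formula for primitive logarithms. Lemma \ref{lemma:plog_direct} gives the claim for two factors, and the extension to the $s$-fold product $G_{S_1} \times \cdots \times G_{S_s}$ is a routine induction: one splits off $G_{S_1}$ from the partial product $G_{S_2} \times \cdots \times G_{S_s}$, which again belongs to $\Ups$ by closure under direct products (Lemma \ref{lemma:primitive_urp_direct}), and combines the two-factor formula with the associativity $\gcd(a_1, \dots, a_s) = \gcd(a_1, \gcd(a_2, \dots, a_s))$. This yields $\plog_{G_S}(g) = \gcd(\plog_{G_{S_1}}(g_1), \dots, \plog_{G_{S_s}}(g_s))$, and together with the first step completes the proof; the case $s = 1$ is trivially the first equality. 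The only point demanding genuine care is the identification of $G_S$ as the direct product of the $G_{S_i}$ in a way compatible with the irreducible factorisation, but once that is in place both equalities are immediate consequences of the cited lemmas.
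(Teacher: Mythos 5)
Your proposal is correct and follows essentially the same route as the paper: the first equality via the retraction onto $G_S$ (Lemma \ref{lemma:primitive_roots_retract}), and the second via the identification $G_S \cong G_{S_1}\times\cdots\times G_{S_s}$ together with the gcd formula of Lemma \ref{lemma:plog_direct}, with the factors lying in $\Ups$ by Theorem \ref{lemma:primitive_urp_gp} and Lemma \ref{lemma:primitive_stability_subgroups}. Your added care in extending the two-factor formula to $s$ factors by induction, and your observation that Lemma \ref{lemma:irreducible.support} is invoked for a not-necessarily-irreducible element but that the underlying retract argument still applies, are both sound refinements of what the paper leaves implicit.
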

\begin{proof}
    The first equality is proven in Lemma \ref{lemma:irreducible.support}. For the second, notice that $G_S = G_{S_1} \times \dots \times G_{S_s}$. By Theorem \ref{lemma:primitive_urp_gp} and Lemma \ref{lemma:primitive_stability_subgroups} each of the direct factors belongs to $\Ups$, and therefore the rest of the statement follows from Lemma \ref{lemma:plog_direct}.
\end{proof}
Combining Lemma \ref{lemma_last} with Lemma \ref{lemma:characterisation_of_p-isolation}, we obtain:
\begin{corollary}
    Let $g \in G$ be cyclically reduced and let $g = g_1 \dots g_s$ be its irreducible factorisation. Denote $S_i = \supp(g_i)$ for $i = 1,\dots, s$ and $S = \supp(g)$. Then $g$ is $p$-isolated in $G$ if and only if
    \begin{displaymath}
        \gcd\left(\plog_{G_{S_1}}(g_1), \dots, \plog_{G_{S_s}}(g_s)\right) = p^e.
   \end{displaymath}
   for some $e \in \mathbb{N}$.
\end{corollary}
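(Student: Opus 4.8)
The plan is simply to combine the two ingredients already at hand, since the statement is a direct consequence of Lemma \ref{lemma_last} and Lemma \ref{lemma:characterisation_of_p-isolation}. First I would invoke the standing assumption of this subsection that $G$ is a graph product of groups in $\Ups$, so that $G \in \Ups$ by Theorem \ref{lemma:primitive_urp_gp} and Lemma \ref{lemma:characterisation_of_p-isolation} applies to it. That lemma tells us that the cyclic subgroup $\langle g \rangle$ is $p$-isolated in $G$ precisely when $\plog_G(g)$ is a power of $p$, that is, when $\plog_G(g) = p^e$ for some $e \in \mathbb{N}$.

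Next I would rewrite $\plog_G(g)$ in terms of the irreducible factors. Since $g$ is cyclically reduced with irreducible factorisation $g = g_1 \dots g_s$, Lemma \ref{lemma_last} gives
\[
\plog_G(g) = \gcd\left(\plog_{G_{S_1}}(g_1), \dots, \plog_{G_{S_s}}(g_s)\right),
\]
where $S_i = \supp(g_i)$. Substituting this expression into the condition obtained in the previous step yields exactly the claimed equivalence: $g$ is $p$-isolated in $G$ if and only if the displayed greatest common divisor equals $p^e$ for some $e \in \mathbb{N}$.

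There is no genuine obstacle here, as both hypotheses needed to invoke the two lemmas are already in place: the assumption that $g$ is cyclically reduced is required for Lemma \ref{lemma_last}, and the containment $G \in \Ups$ is required for Lemma \ref{lemma:characterisation_of_p-isolation}. The only point deserving a word of care is that being \emph{$p$-isolated} is strictly a property of the cyclic subgroup $\langle g \rangle$ rather than of $g$ regarded as an element; but this is precisely the convention fixed in Definition \ref{def:p.isolation}, so the translation between the two formulations is immediate and the corollary follows at once.
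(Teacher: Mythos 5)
Your proof is correct and is exactly the paper's argument: the corollary is obtained by combining Lemma \ref{lemma:characterisation_of_p-isolation} (which characterises $p$-isolation of $\langle g\rangle$ by $\plog_G(g)$ being a power of $p$) with Lemma \ref{lemma_last} (which expresses $\plog_G(g)$ as the stated $\gcd$). Nothing further is needed.
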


\section*{Acknowledgements}
The authors would like to thank Ashot Minasyan for many useful suggestions. We would also like to thank the anonymous referee for suggesting several simplifications of the original manuscript.

The initial stage of this work greatly benefited from the \textquotedblleft Measured group theory\textquotedblright\ programme held in 2016 at the Erwin Schr\"{o}dinger International Institute for Mathematics and Physics, Vienna. We thank the institution for its warm hospitality and for the excellent working environment. 
At that stage, the authors were supported by the ERC grant of Prof. Goulnara Arzhantseva, grant agreement no.~259527. 

Federico Berlai was supported in part by ERC grant PCG-336983, Basque Government Grant IT974-16, and Ministry of Economy, Industry and Competitiveness of the Spanish Government Grant MTM2017-86802-P, and he is now supported by the Austrian Science Foundation FWF, grant no. J4194.

Michal Ferov is now supported by the Australian Research Council Discovery Project grant DP160100486.

    \bibliographystyle{plain}
    \bibliography{references}
\end{document}